\newcommand{\vertiii}[1]{{\vert\kern-0.25ex\vert\kern-0.25ex\vert #1 
    \vert\kern-0.25ex\vert\kern-0.25ex\vert}}
\providecommand{\fdg}{{\,\big|\,}}
\definecolor{Darkgreen}{rgb}{0,0.4,0}
\newcommand{\bS}{\textrm {S}}
\newcommand{\bD}{\mathcal {D}}
\newcommand{\bR}{{R}}
\newcommand{\bE}{{E}}
\newcommand{\bN}{\text {N}}
\newcommand{\levy}{\boldsymbol \varepsilon}
\newcommand{\sym}{\textrm{sym}}
\newcommand{\anti}{\textrm{skew}}
\newcommand{\setR}{\mathbb{R}}
\DeclareMathOperator{\curl}{curl}
\DeclareMathOperator{\dist}{dist}
  \providecommand{\abstmp}[2]{{#1\lvert{#2}#1\rvert}}
  \providecommand{\abs}[1]{\abstmp{}{#1}}
  \providecommand{\Bigabs}[1]{\abstmp{\Big}{#1}}
    \providecommand{\Xint}[1]{\mathchoice
    {\XXint\displaystyle\textstyle{#1}}%
    {\XXint\textstyle\scriptstyle{#1}}%
    {\XXint\scriptstyle\scriptscriptstyle{#1}}%
    {\XXint\scriptscriptstyle\scriptscriptstyle{#1}}%
    \!\int}
  \providecommand{\XXint}[3]{{\setbox0=\hbox{$#1{#2#3}{\int}$}
      \vcenter{\hbox{$#2#3$}}\kern-.5\wd0}}
  \providecommand{\dashint}{\mathop{\Xint-}}
  \DeclareMathSymbol{\sm}{\mathbin}{AMSa}{"39}
\DeclareMathOperator*{\divo}{\operatorname{div}}
\DeclareMathAlphabet\mathbfcal{OMS}{cmsy}{b}{n}
\DeclareMathAlphabet\mathbfscr{OMS}{mdugm}{b}{n}
\spnewtheorem{defn}[equation]{Definition}{\bfseries}{\upshape}
\spnewtheorem{prop}[equation]{Proposition}{\bfseries}{\upshape}
\spnewtheorem{thm}[equation]{Theorem}{\bfseries}{\upshape}
\spnewtheorem{cor}[equation]{Corollary}{\bfseries}{\upshape}
\spnewtheorem{rmk}[equation]{Remark}{\bfseries}{\upshape}
\spnewtheorem{lem}[equation]{Lemma}{\bfseries}{\upshape}
\spnewtheorem{expl}[equation]{Example}{\bfseries}{\upshape}
\spnewtheorem{asum}[equation]{Assumption}{\bfseries}{\upshape}
\spnewtheorem{alg}[equation]{Algorithm}{\bfseries}{\upshape}
\spnewtheorem{ex}[equation]{Experiment}{\bfseries}{\upshape}
\numberwithin{equation}{section} 
\begin{document}
	
\title{Analysis of fully discrete, quasi non-conforming approximations of evolution
  equations and applications}
	
\author{Luigi C. Berselli \and Alex Kaltenbach \and {Michael R\r{u}\v{z}i\v{c}ka}}

\institute{L.C. Berselli \at Department of Mathematics, University of Pisa
  \\
  Via F. Buonarroti 1/c, I-56127 Pisa  (ITALY),
  \\
  \email{luigi.carlo.berselli@unipi.it}
  \\
  A. Kaltenbach \at Institute of Applied Mathematics,
  Albert-Ludwigs-University Freiburg,
  Ernst-Zermelo-Straße 1, D-79104 Freiburg (GERMANY),
  \\
  \email{alex.kaltenbach@mathematik.uni-freiburg.de}
  \\
  M. R\r{u}\v{z}i\v{c}ka \at Institute of Applied Mathematics,
  Albert-Ludwigs-University Freiburg,
  Ernst-Zermelo-Straße 1, D-79104 Freiburg (GERMANY),
  \\
  \email{rose@mathematik.uni-freiburg.de} 
}

\date{Received: date / Accepted: date}

\maketitle

\begin{abstract}
  In this paper we consider fully discrete approximations of abstract 
evolution
  equations, by means of a quasi non-conforming spatial approximation and finite
  differences in time (Rothe--Galerkin method). The main result is the convergence of the
  discrete solutions to a weak solution of the continuous problem. Hence, the result can
  be interpreted either as a justification of the numerical method, or as an alternative
  way of constructing weak solutions.
  
  We set the problem in the very general and abstract setting of
  pseudo-monotone operators, which allows for a unified treatment of
  several evolution problems. The examples --which fit into our
  setting and which motivated our research-- are problems describing
  the motion of incompressible fluids, since the quasi non-conforming
  approximation allows to handle problems with prescribed divergence.

  Our abstract results for pseudo-monotone operators allow to show
  convergence just by 
  verifying a few natural assumptions on the operator 
  time-by-time and on the discretization spaces.  Hence, applications and
  extensions to several other evolution problems can be easily performed. 
The results of
  some numerical experiments are reported in the final section.
  
  \keywords{Fully discrete\and Pseudo-monotone operator \and Evolution
    equation}
  \subclass{47 H05, 65 M12, 35 K90, 35 A01 }
\end{abstract}

\section{Introduction}
\label{intro}
We consider the numerical approximation of an abstract evolution
equation\footnote{ If not specified differently, we will denote in
  boldface elements of Bochner spaces (as the solution
  $\boldsymbol{u}(t)$ and the external source $\boldsymbol{f}(t)$), to
  highlight the difference with elements belonging to standard Banach
  spaces, denoted by usual symbols.}
\begin{align}
  \begin{aligned}
    \frac{d\boldsymbol{u}}{dt}(t)+A(t)(\boldsymbol{u}(t))&= \boldsymbol{f}(t)&&\quad\text{
      in } V^*,
    \\
    \boldsymbol{u}(0)&={u}_0&&\quad\text{ in }H,
  \end{aligned} \label{eq:1.1}
\end{align}
by means of a quasi non-conforming Rothe--Galerkin scheme. Here, ${V{\hookrightarrow} H
  \cong H^*{\hookrightarrow}V^*}$ is a given evolution triple, $I:=\left(0,T\right)$ a
finite time horizon, ${u}_0\in H$ an initial value, ${\boldsymbol{f}\in L^{p'}(I,V^*)}$,
$p \in (1,\infty)$, a right-hand side and $A(t):V\rightarrow V^*$, $t\in I$, a family of
operators.

In order to make~\eqref{eq:1.1} accessible to non-conforming approximation methods, we
will additionally require that there exists a further evolution triple $X{\hookrightarrow}
Y \cong Y^*{\hookrightarrow}X^*$, such that $V\subseteq X$ with $\|\cdot\|_V=\|\cdot\|_X$
on $V$ and $H\subseteq Y$ with $(\cdot,\cdot)_H=(\cdot,\cdot)_Y$ in $H$, and extensions
${\hat{A}(t):X\rightarrow X^*}$,~$t\in I$, and $\hat{\!\boldsymbol{f}}\in L^{p'}(I,X^*)$
of $\{A(t)\}_{t\in I}$ and $\boldsymbol{f}$, resp., i.e., $\langle
\hat{A}(t)v,w\rangle_X=\langle A(t)v,w\rangle_V$ and $\langle\,
\hat{\!\boldsymbol{f}}(t),v\rangle_X=\langle \boldsymbol{f}(t),v\rangle_V$ for all $v,w\in
V$ and almost every $t\in I$. For sake of readability we set $A(t):=\hat{A}(t)$ and
$\boldsymbol{f}(t):=\hat{\!\boldsymbol{f}}(t)$ for almost every $t\in I$.

\medskip

Recently, the existence theory for abstract evolution problems with
Bochner pseudo-monotone operators in \cite{alex-rose-hirano}, based on
the convergence of a Galerkin approximation, was extended in
\cite{BR19} to the convergence proof of a fully discrete
Rothe--Galerkin approximation. However, the result in \cite{BR19} is
not applicable to the treatment of problems describing the flow of
incompressible fluids. The main aim of this paper is develop an
abstract framework which shows that a fully discrete Rothe--Galerkin
approximation converges also for such problems (cf.~Theorem
\ref{5.17}). Even though the framework is rather abstract it is easily
applicable to many problems, since we show that it is enough to check
a few easily verifiable conditions (cf.~conditions
(\hyperlink{A.1}{A.1})--(\hyperlink{A.4}{A.4}) in
Proposition~\ref{3.9} and conditions
(\hyperlink{QNC.1}{QNC.1})--(\hyperlink{QNC.2}{QNC.2}) in
Definition~\ref{3.1}).

A prototypical example for the flow of incompressible non-Newtonian
fluids are the following equations describing the unsteady
motion of incompressible shear-dependent fluids 
\begin{align}
  \begin{split}
    \begin{alignedat}{2}
      \partial_t \boldsymbol{u}-\divo \big (
      (\kappa
      +|\mathcal{D}\boldsymbol{u}|)^{p-2}\mathcal{D}\boldsymbol{u}\big )+\text{div}(\boldsymbol{u}\otimes
      \boldsymbol{u}) +\nabla\boldsymbol{q}&=\boldsymbol{f}&&\quad\text{ in }I\times\Omega,
      \\
      \divo \boldsymbol{u}&=0&&\quad\text{ in }I\times\Omega,
      \\
      \boldsymbol{u}&=\mathbf{0}&&\quad\text{ on }I\times \partial\Omega,
      \\
      \boldsymbol{u}(0)&={u}_0&&\quad\text{ in }\Omega.
    \end{alignedat}
  \end{split}\label{eq:p-NS}
\end{align}
Here, $\Omega \subseteq \mathbb{R}^d$, $d\ge 2$, is a domain, 
$I=(0,T)$ a  time interval, and $\kappa\ge 0 $ and $p \in (1,\infty)$ are material parameters of
the shear-dependent fluid. Moreover,
$\boldsymbol{u}:I\times\Omega\to \mathbb{R}^d$ denotes the velocity,
$\boldsymbol{f}:I\times\Omega\to \mathbb{R}^d$ is a given external
force, ${u}_0:\Omega\to \mathbb{R}^d$ is an initial condition,
$\boldsymbol{q}:I\times\Omega\to \mathbb{R}$ is the pressure and
$\mathcal{D}\boldsymbol{u}:=\frac{1}{2}(\nabla \boldsymbol{u}+\nabla
\boldsymbol{u}^\top)$ denotes the symmetric
gradient.

We define for $p>\frac{3d+2}{d+2}$ the function spaces $X:=W^{1,p}_0(\Omega)^d$,
$Y:=L^2(\Omega)^d$, $V:=W^{1,p}_{0,\divo }(\Omega)$ as the closure of
$\mathcal{V}:=\{{v}\in C_0^\infty(\Omega)^d\fdg \divo \,{v}\equiv 0\}$ in
$X$, $H:=L^2_{\divo }(\Omega)$ as the closure of $\mathcal{V}$ in $Y$, and the operators ${S,B:X\to X^*}$ for all ${u},{v}\in X$ via
\begin{gather}\label{eq:sb}
  \langle S{u},{v}\rangle_X:= \int_\Omega (\kappa
  +|{\bD}{u}|)^{p-2}{\bD u} :{\bD v}\,dx
  \quad \text{ and } \quad \langle
  B{u},{v}\rangle_X:=-\int_\Omega{{u}\otimes {u}:{\nabla v}\,dx}.
\end{gather}
Then, \eqref{eq:p-NS} for ${u}_0\in H$ and
$\boldsymbol f \in L^{p'}(I,X^*)$ can be re-written as the abstract 
evolution equation
\begin{align}
  \begin{split}
    \begin{alignedat}{2}
      \frac{d\boldsymbol{u}}{dt}(t)+S(\boldsymbol{u}(t))+B(\boldsymbol{u}(t))&=\boldsymbol{f}(t)&&\quad\text{
        in }V^*,
      \\
      \boldsymbol{u}(0)&={u}_0&&\quad\text{ in }H.
      \label{eq:p-NS2}
    \end{alignedat}
  \end{split}
\end{align}

Typical fully discrete  approximations of \eqref{eq:p-NS2} are often based on 
appropriate finite element spaces $(V_n)_{n\in
  \mathbb{N}}$. As the construction of finite element spaces which meet the
divergence constraint exactly, i.e., satisfy $V_n\subseteq V$ for all $n\in \mathbb{N}$, highly
restricts the flexibility of the approximation, one usually
works with finite element spaces satisfying a discrete divergence
constraint only. Thus, we have $V_{n}\not\subseteq V$, which is the
reason that the theory in \cite{BR19} is not applicable. However, the
spaces $V_n$ often satisfy $V_n\subseteq X$, which allows us to
develop a convergence theory for such a setting. Note that this
problem is treated from a different point of view, namely the theory of maximal
monotone graphs, in
the recent contributions \cite{tscherpel-phd} and
\cite{sueli-tscherpel}. 

\subsection{The numerical scheme}
A quasi non-conforming Rothe--Galerkin approximation of the initial value
problem~\eqref{eq:1.1} usually consists of two parts:
	
The first part is a spatial discretization, often called Galerkin
approximation, which consists in the approximation of $V$ by a
sequence of closed subspaces $(V_n)_{n\in \mathbb{N}} $ of $X$. We
emphasize that we do not require $(V_n)_{n\in \mathbb{N}}$ to be a
sequence of subspaces of $V$, which motivates the prefix
\textit{non-conforming}. Hence, we do \textit{not} have
$V_{n}\subseteq V$ and $V_{n}\nearrow V$ (approximation from below).
The prefix \textit{quasi}\footnote{Observe that the subspaces
  $(V_n)_{n\in \mathbb{N}}$ 
  are spatially conforming in $X$.} indicates that the subspaces $V_n$ are for
all $n \in \mathbb N$ equipped with the norm of the space $X$. 
In this case we have, under appropriate assumptions, that
$V_{n}\searrow V$, i.e., we have an approximation of the space $V$
from above. The assumption that the subspaces $V_n$ are equipped with
the norm of the space $X$ (and not with a norm depending on $n$),
together with the assumption that $\|\cdot \|_X=\|\cdot\|_V$ on $V$
reflects the fact that the spaces $V_n$ and $X$ have the same \glqq
regularity\grqq. This excludes spaces $V_n$ resulting from spatial
discontinuous Galerkin approximations (cf.~\cite{DiPE12}). For a
spatial discontinuous Galerkin approximations in the above example one
would choose $X=L^2(\Omega)^d$ and $V_n\subseteq X$ as a
subspace\footnote{$\mathcal{P}_k(\mathcal{T}_h)$, $k\in \mathbb{N}_0$,
  denotes the space of possibly discontinuous scalar functions, which
  are polynomials of degree at most $m$ on each simplex
  $K\in \mathcal{T}_h$.} of $\mathcal{P}_k(\mathcal T_{h_n})^d$
satisfying a discrete divergence constraint and being equipped with the norm of the
\textit{broken} Sobolev space
$W^{1,p}_{0,\divo,\text{DG}}(\mathcal T_{h_n})^d$, where
$\mathcal T_{h_n}$ is an appropriate triangulation of $\Omega$. These
choices violate both our assumptions and are thus not included in our
treatment.

The second part is a temporal discretization, also called Rothe
scheme, which consists in the approximation of the unsteady problem
\eqref{eq:1.1} by a sequence of piece-wise constant, steady
problems. This is achieved by replacing the time derivative
$\frac{d}{dt}$ by so-called backwards difference quotients. These are
for a given step-size $\tau:=\frac{T}{K}>0$, where $K\in \mathbb{N}$,
and a given finite sequence $(u^k)_{k=0,\dots,K}\subseteq X$ defined via
\begin{align*}
  d_\tau u^k:=\frac{1}{\tau}(u^k-u^{k-1})\quad\text{ in }X\quad\text{ for all }k=1,\dots, K.
\end{align*}
Moreover, the operator family $A(t):X\to X^*$, $t\in I$, and the
right-hand side $\boldsymbol{f}\in L^{p'}(I,X^*)$ are discretized by
means of the Cleme\'nt $0$-order quasi interpolant. This means that
for a given step-size $\tau=\frac{T}{K}>0$, where $K\in \mathbb{N}$,
we replace them piece-wise by their local temporal means, i.e., by
$[A]^\tau_k:X\to X^*$, $k=1,\dots,K$,
and $([\boldsymbol{f}]^\tau_k)_{k=1,\dots,K}\subseteq X^*$, resp., for every
$k=1,\dots,K$ and $u\in X$ given via
\begin{align*}
  [A]^\tau_ku:=\fint_{\tau(k-1)}^{\tau
  k}{A(t)u\,dt} \qquad \textrm{and}\qquad[\boldsymbol{f}]^\tau_k:=\fint_{\tau(k-1)}^{\tau
  k}{\boldsymbol{f}(t)\,dt}\qquad\text{ in }X^*. 
\end{align*}
Altogether, using these two levels of approximation, we formulate the
following fully discrete or Rothe--Galerkin scheme of the evolution
problem \eqref{eq:1.1}: 
\begin{alg}[quasi non-conforming Rothe--Galerkin scheme]
  For given $K,n\in \mathbb{N}$ and $u_n^0\in V_n$ the sequence of
  iterates $(u_n^{k})_{k=0,\dots,K}\subseteq V_n$ is given solving the
  implicit scheme for $\tau=\frac{T}{K}$ and $k=1,\dots,K$ 
  \begin{align}
    (d_\tau u^k_n,v_n)_Y+\langle [A]^\tau_ku^k_n,v_n\rangle_X
    = \langle [\boldsymbol{f}]^\tau_k,v_n\rangle_X
    \quad\text{ for all }v_n\in V_n. \label{eq:fully}
  \end{align}
\end{alg}


Traditionally, the verification of the convergence of a
Rothe--Galerkin scheme like \eqref{eq:fully} to a weak solution of the
evolution equation \eqref{eq:1.1} causes a certain effort. In the case
that quasi non-conforming approximations are used in \eqref{eq:fully},
to the best of the authors' knowledge, there are no abstract results
guaranteeing the weak convergence of such a scheme. Therefore, the
purposes of this article are (i) to give general and easily verifiable
assumptions on both the operator family $A(t):X\to X^*$,~$t\in I$, and
the sequence of approximative spaces $(V_n)_{n\in \mathbb{N}}$ which
provide both the existence of iterates
$(u_n^k)_{k=0,\dots,K}\subseteq V_n$, solving \eqref{eq:fully}, for a
sufficiently small step-size
$\tau=\frac{T}{K}\in \left(0,\tau_0\right)$, where $\tau_0>0$, and
$K,n\in \mathbb{N}$; and (ii) to prove the stability of the scheme, i.e.,
the boundedness of the piece-wise constant interpolants
${\overline{\boldsymbol{u}}^\tau_n\in L^\infty(I,V_n)}$,
$K,n\in \mathbb{N}$ with
$\tau=\frac{T}{K}$, 
in $L^p(I,X)\cap L^\infty(I,Y)$; and finally (iii) to show the weak convergence of
a diagonal subsequence
$(\overline{\boldsymbol{u}}^{\tau_n}_{m_n})_{n\in \mathbb{N}}\subseteq
L^\infty(I,X)$, where $\tau_n=\frac{T}{K_n} $ and
$K_n,m_n\to \infty$~${(n\to\infty)}$, towards a weak solution of
problem \eqref{eq:1.1}. All these results are formulated exactly and proved
in Section \ref{sec:6} (cf.~Proposition~\ref{5.1}, Proposition
\ref{apriori} and  the main Theorem
\ref{5.17}).\\[-3mm]

Surprisingly, 
there are only few contributions with a rigorous convergence analysis
of fully discrete Rothe--Galerkin schemes towards weak solutions. Most
authors consider only semi-discrete schemes, i.e., either a pure Rothe
scheme (cf.~\cite{Rou05}) or a pure Galerkin scheme (cf.~\cite{GGZ},
\cite{Zei90B}, \cite{showalter}, \cite{alex-rose-hirano}). Much more results are
concerned with explicit convergence rates for more regular data and
more regular solutions (cf.~\cite{BarLiu94}, \cite{rulla},
\cite{NoSaVe00}, \cite{FeOePr05}, \cite{DiEbRu07}, \cite{P08},
\cite{CHP10}, \cite{BaNoSa14}, \cite{sarah-phd}, \cite{BDN},
\cite{breit-mensah}). Even in the conforming case, that is if the sequence $(V_n)_{n\in \mathbb{N}}$
satisfies the following two natural conditions:\footnote{If
  (\hyperlink{C.1}{C.1}) and (\hyperlink{C.2}{C.2}) are satisfied one
  can choose $X=V$ in the above setting.  }
\begin{description}
\item[\textbf{(C.1)}] \hypertarget{C.1}{} $(V_n)_{n \in \mathbb N}$ is
  an increasing sequence of closed subspaces of $V$, i.e.,
  $V_n\!\subseteq\! V_{n+1}\!\subseteq V$~for~all~${n\!\in\! \mathbb{N}}$.
\item[\textbf{(C.2)}] \hypertarget{C.2}{} $\bigcup_{n\in \mathbb{N}}{V_n}$ is dense in $V$.
\end{description}
there are very few results concerning the convergence analysis of a
fully discrete Rothe--Galerkin scheme.  We are only aware of the early
contribution \cite{AL83} which treats the porous media equation and
\cite{BR19} dealing with
a setting similar to the one proposed in the present paper. 

Let us shortly explain the strategy used in \cite{alex-rose-hirano} and
\cite{BR19}, since it will be extended in the present paper to handle
also a quasi non-conforming setting. Using the properties of the
operator family ${A(t):V\to V^*}$,~${t\in I}$ (cf.~\cite[condition
(C.1)--(C.4)]{alex-rose-hirano}) and the properties of the Rothe--Galerkin 
scheme
\eqref{eq:fully} one can show that the iterates
$(u_n^k)_{k=0,\dots,K}\subseteq V_n$, $K,n\in \mathbb{N}$, solving
\eqref{eq:fully}, generate for sufficiently small $\tau=\frac{T}{K}$ a
family of piece-wise constant interpolants
$\overline{\boldsymbol{u}}_n^\tau$ for which the following holds:

There exists a sequence
$(\overline{\boldsymbol{u}}_n)_{n\in
  \mathbb{N}}:=(\overline{\boldsymbol{u}}_{m_n}^{\tau_n})_{n\in
  \mathbb{N}}$ and an element
$\overline{\boldsymbol{u}}\in L^p(I,V)\cap L^\infty(I,H)$, such that
\begin{alignat*}{2}
  \overline{\boldsymbol{u}}_n&\;\;\rightharpoonup\;\;\overline{\boldsymbol{u}}&&\quad\text{
    in }L^p(I,V)\qquad \;\,(n\to \infty),
  \\
  \overline{\boldsymbol{u}}_n&\;\;\overset{\ast}{\rightharpoondown}\;\;\overline{\boldsymbol{u}}&&\quad\text{
    in }L^\infty(I,H)\qquad (n\to \infty),
  \\
  \overline{\boldsymbol{u}}_n(t)&\;\;\rightharpoonup\;\;\overline{\boldsymbol{u}}(t)&&\quad\text{
    in }H\quad\text{ for a.e. }t\in I\quad(n\to\infty),
  \\
  \limsup_{n\to \infty}&\;\langle
  \mathbfcal{A}\overline{\boldsymbol{u}}_n,\overline{\boldsymbol{u}}_n&&-\overline{\boldsymbol{u}}\rangle_{L^p(I,V)}\leq
  0,
\end{alignat*}
where $\mathbfcal{A}: L^p(I,V)\cap L^\infty(I,H)\to (L^p(I,V))^*$
denotes the induced operator, which is for every
$\boldsymbol{u}\in L^p(I,V)\cap L^\infty(I,H)$ and
$\boldsymbol{v}\in L^p(I,V)$ given via
$\langle
\mathbfcal{A}\boldsymbol{u},\boldsymbol{v}\rangle_{L^p(I,V)}:=\int_I{\langle
  A(t)(\boldsymbol{u}(t)),\boldsymbol{v}(t)\rangle_V\,dt}$.  Using
these properties and the fact that the induced operator $\mathbfcal A$
is \textit{Bochner pseudo-monotone} one can conclude that
$\mathbfcal{A}\overline{\boldsymbol{u}}_n\rightharpoonup\mathbfcal{A}\overline{\boldsymbol{u}}
$ in $(L^p(I,V))^*$ $(n \to \infty)$, and therefore  the weak
convergence of the scheme \eqref{eq:fully}. In this argumentation one
has used on several places the fact that the sequence
$(V_n)_{n\in \mathbb{N}}$ satisfies the conditions
(\hyperlink{C.1}{C.1}) and (\hyperlink{C.2}{C.2}).
	
Without the conditions (\hyperlink{C.1}{C.1}) and
(\hyperlink{C.2}{C.2}), i.e., $V\neq X$ and $V_n\not\subseteq V$ for
all $n\in \mathbb{N}$ one could hope to prove the above properties
with $V$ and $H$ replaced by $X$ and $Y$, respectively. Even if this
works it is not clear whether the weak limit lies in the right
function space, i.e., whether
$\overline{\boldsymbol{u}}\in L^p(I,V)\cap L^\infty(I,H)$. To
guarantee that this procedure works in an appropriate sense, we will assume that
$(V_n)_{n\in\mathbb{N}}$ satisfies the assumptions
(\hyperlink{QNC.1}{QNC.1}) and (\hyperlink{QNC.2}{QNC.2}) from
Definition \ref{3.1}.
Moreover, we have to adapt the notion of Bochner pseudo-monotone
operators to the quasi non-conforming setting.

\subsection{The example of the $p$-Navier-Stokes equations}
Let us indicate that the prototypical example \eqref{eq:p-NS} fits
into the abstract setting of the previous section. Full details will
be given in Section \ref{sec:7} where two different problems from the field of
incompressible fluid flows are treated. In fact, one of these examples
contains problem  \eqref{eq:p-NS} as a special case.

Let $Z:=L^{p'}(\Omega)$. For a given family of shape regular triangulations
(cf.~\cite{BS08}) $(\mathcal{T}_h)_{h>0}$ of a polygonal Lipschitz domain 
$\Omega$ and for 
given $m,\ell \in \mathbb N_0$, we denote by
${X_h\subset\mathcal{P}_m(\mathcal{T}_h)^d\cap X}$, equipped with the $X$-norm, and $Z_h\subset
\mathcal{P}_\ell(\mathcal{T}_h)\cap Z$, equipped with the $Z$-norm, appropriate finite element
spaces. 
In addition, we define for $h>0$ the \textit{discretely divergence
  free finite element spaces}
\begin{align*}
  V_h:=\{{v}_h\in X_h \fdg \langle\divo {v}_h,\eta_h\rangle_Z=0\text{ 
for all }\eta_h\in Z_h\}.
\end{align*}
For a null sequence\footnote{A null sequence is a sequence converging to zero.}
$(h_n)_{n\in \mathbb{N}}\subseteq\left(0,\infty\right)$ and
$V_n:=V_{h_n}$, $n\in \mathbb{N}$, we formulate the following
algorithm of a space-time discrete approximation of \eqref{eq:p-NS}:
\begin{alg}
  For given $K,n\in \mathbb{N}$ and ${u}_n^0\in V_n$ the sequence of
  iterates $({u}_n^{k})_{k=0,\dots,K}\subseteq V_n$ is given solving the
  implicit Rothe--Galerkin scheme for $\tau=\frac{T}{K}$ and $k=1,\dots,K$
  \begin{align}
    (d_\tau {u}^k_n,{v}_n)_Y+\langle [S]
    {u}^k_n,{v}_n\rangle_X+\langle \hat{B} {u}^k_n,{v}_n\rangle_X=\langle [\boldsymbol{f}]^\tau_k,{v}_n\rangle_X\quad\text{ for all }{v}_n\in V_n,\label{eq:p-NSnon}
  \end{align}
  where $\hat{B}:X\to X^*$ is given via
  $\langle \hat{B}{u},{v}\rangle_X:=\frac{1}{2}\int_{\Omega}
  {{v}\otimes{u}:\nabla{u}}\, dx-\frac{1}{2} \int_{\Omega}{{u}\otimes
    {u}:\nabla {v}}\, dx$ for all ${{u},{v}\in X}$.
\end{alg}

The operator $\hat{B}$ can be viewed as a symmetrized extension of $B$, as
$\langle\hat{B}{u},{v}\rangle_X=\langle B{u},{v}\rangle_X$ for all
${u},{v}\in V$, which in contrast to $B$ fulfils
$\langle \hat{B}{u},{u}\rangle_X=0$ for all ${u}\in X$ (and not only for all ${u}\in V$), and therefore
guarantees the stability of the scheme \eqref{eq:p-NSnon}.
	
      The sequence $(V_n)_{n\in \mathbb{N}}$ violates the conditions
      (\hyperlink{C.1}{C.1}) and (\hyperlink{C.2}{C.2}). However, the
      assumptions 
    (\hyperlink{QNC.1}{QNC.1}) and (\hyperlink{QNC.2}{QNC.2}) on the discrete spaces
      $(V_n)_{n\in \mathbb{N}}$ are often fulfilled under mild
      assumptions, e.g., if one assumes that
      ${\mathcal{P}_1(\mathcal{T}_h)^d\subset X_h}$,
      $\mathbb{R}\subset Z_h$, and that there exist linear interpolation operators 
     $\Uppi_h^{\divo }:X\to X_h$ and $\Uppi_h^{Z}:Z\to Z_h$
      which are locally  $W^{1,1}$-stable and locally  $L^{1}$-stable,
      resp., and that $\Uppi_h^{\divo }$ preserves the divergence in
      $Z_h^*$ (cf.~Section \ref{sec:7} or \cite{BBDR12}, \cite{tscherpel-phd} for
      more details). 

        \bigskip

	\textbf{Plan of the paper:} In Section \ref{sec:2} we recall
        some basic definitions and results concerning the theory of
        pseudo-monotone operators and evolution equations. In Section
        \ref{sec:3} we introduce the concept of quasi non-conforming
        approximations. In Section \ref{sec:4} we introduce  quasi
        non-conforming Bochner pseudo-monotonicity, and give
        sufficient and easily verifiable conditions on families of
        operators such that the corresponding induced operator
        satisfies this concept. In Section \ref{sec:5} we recall some
        basic facts about the Rothe scheme. In Section \ref{sec:6} we
        formulate the scheme of a fully discrete, quasi non-conforming
        approximation of an evolution equation, prove that this scheme
        is well-defined, i.e., the existence of iterates, that the
        corresponding family of piece-wise constant interpolants
        satisfies certain a-priori estimates. Moreover, we formulate
        and prove the main result of this paper,  Theorem \ref{5.17}, which shows the
        existence of  a diagonal subsequence which weakly converges to
        a weak solution of the corresponding evolution equation. In
        Section \ref{sec:7} we apply this approximation scheme to two
        problems describing incompressible non-Newtonian fluid flow.
        In Section \ref{sec:8} we present some numerical experiments
        for one of the problems.
	\section{Preliminaries}
	\label{sec:2} 
	\subsection{Operators}
	For a Banach space $X$ with norm $\|\cdot\|_X$ we denote by $X^*$ its 
	dual space equipped with the norm ${\|\cdot\|_{X^*}}$. The duality pairing is denoted 
	by $\langle\cdot,\cdot\rangle_X$. All occurring Banach spaces
        are assumed to be real. 
	\begin{defn}\label{2.7}
          Let $X$ and $Y$ be Banach spaces.  The operator
          $A: X\rightarrow Y$ is said to be
          \begin{description}[{(iii)}]
          \item[{(i)}] \textbf{bounded}, if for all bounded subsets
            $M\subseteq X$ the image $A(M)\subseteq Y$ is bounded.
          \item[{(ii)}] \textbf{pseudo-monotone}, if $Y=X^*$, and for
            $(u_n)_{n\in \mathbb{N}}\subseteq X$ from
            $u_n\rightharpoonup u$ in $X$ $(n\to\infty)$ and
            ${\limsup_{n\to\infty}{\langle Au_n,u_n-u\rangle_X}\leq
              0}$, it follows
            $\langle Au,u-v\rangle_X\leq \liminf_{n\to\infty}{\langle
              Au_n,u_n-v\rangle_X}$ for every $v\in X$.
          \item[{(iii)}] \textbf{coercive}, if $Y=X^*$ and
            $ \lim_{\|u\|_X\rightarrow\infty} {\frac{\langle
                Au,u\rangle_X}{\|u\|_X}}=\infty$.
          \end{description}
        \end{defn}

	\begin{prop}\label{2.9} If $X$ is a reflexive Banach space and $A:X\to X^*$ a bounded, pseudo-monotone, and coercive operator, then $R(A)=X^*$.
	\end{prop}

	\begin{proof}
		See \cite[Corollary 32.26]{Zei90B}.\hfill$\qed$
	\end{proof}

        	\begin{lem}\label{2.9a} If $X$ is a reflexive Banach
                  space and $A:X\to X^*$ a locally bounded and pseudo-monotone
                  operator, then $A$ is demi-continuous.
	\end{lem}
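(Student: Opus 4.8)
The plan is to show directly that every strongly convergent sequence $x_n\to x$ in $X$ satisfies $Ax_n\rightharpoonup Ax$ in $X^*$, combining local boundedness with the subsequence principle and reflexivity. First I would fix such a sequence $x_n\to x$. Since $A$ is locally bounded, there is a neighbourhood $U$ of $x$ with $A(U)$ bounded in $X^*$; as $x_n\to x$, we have $x_n\in U$ for all $n$ sufficiently large, so $(Ax_n)_{n\in\mathbb N}$ is bounded in $X^*$, say $\|Ax_n\|_{X^*}\le M$ for all $n$. Consequently
\[
|\langle Ax_n,x_n-x\rangle_X|\le M\,\|x_n-x\|_X\longrightarrow 0\qquad(n\to\infty),
\]
so in particular $\limsup_{n\to\infty}\langle Ax_n,x_n-x\rangle_X\le 0$, and since $x_n\to x$ implies $x_n\rightharpoonup x$ in $X$, the hypotheses of pseudo-monotonicity are met.

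Second, to establish the weak convergence $Ax_n\rightharpoonup Ax$ I would invoke the subsequence principle: it suffices to prove that every subsequence of $(Ax_n)_{n\in\mathbb N}$ admits a further subsequence converging weakly to $Ax$. So I take an arbitrary subsequence; it is bounded in $X^*$, and since $X$ is reflexive so is $X^*$, hence there is a further subsequence, still denoted $(Ax_{n_k})_{k\in\mathbb N}$, with $Ax_{n_k}\rightharpoonup\xi$ in $X^*$ for some $\xi\in X^*$. Along this subsequence the assumptions of pseudo-monotonicity still hold, so for every $y\in X$
\[
\langle Ax,x-y\rangle_X\le\liminf_{k\to\infty}\langle Ax_{n_k},x_{n_k}-y\rangle_X .
\]
Writing $\langle Ax_{n_k},x_{n_k}-y\rangle_X=\langle Ax_{n_k},x_{n_k}-x\rangle_X+\langle Ax_{n_k},x-y\rangle_X$, the first term tends to $0$ as above and the second tends to $\langle\xi,x-y\rangle_X$ by $Ax_{n_k}\rightharpoonup\xi$; hence $\langle Ax,x-y\rangle_X\le\langle\xi,x-y\rangle_X$ for all $y\in X$.

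Third, I would exploit the symmetry of this inequality: replacing $y$ by $2x-y$ turns $x-y$ into $y-x$ and yields the reverse inequality $\langle Ax,x-y\rangle_X\ge\langle\xi,x-y\rangle_X$ for all $y\in X$. Combining the two gives $\langle Ax-\xi,x-y\rangle_X=0$ for all $y\in X$, hence $\xi=Ax$. Thus the chosen sub-subsequence satisfies $Ax_{n_k}\rightharpoonup Ax$, and by the subsequence principle the full sequence converges, i.e. $Ax_n\rightharpoonup Ax$ in $X^*$, which is precisely demi-continuity. I do not expect a genuine obstacle here; the only points requiring a little care are deriving the uniform bound $\|Ax_n\|_{X^*}\le M$ from local boundedness (so that $\langle Ax_n,x_n-x\rangle_X\to 0$), using reflexivity of $X^*$ to extract weakly convergent subsequences of $(Ax_n)_{n\in\mathbb N}$, and the substitution $y\mapsto 2x-y$ that upgrades the pseudo-monotonicity inequality to the identity $\xi=Ax$.
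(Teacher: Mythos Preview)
Your proof is correct and follows the standard argument: local boundedness plus strong convergence forces $\langle Ax_n,x_n-x\rangle_X\to 0$, reflexivity of $X^*$ supplies weakly convergent subsequences, and pseudo-monotonicity together with the substitution $y\mapsto 2x-y$ identifies the weak limit as $Ax$. The paper does not give its own proof but simply cites \cite[Proposition~27.7]{Zei90B}; your argument is essentially the one found there.
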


	\begin{proof}
		See \cite[Proposition 27.7]{Zei90B}.\hfill$\qed$
	\end{proof}

	\subsection{Evolution equations}
	We call $(V,H,j)$ an \textbf{evolution triple}, if $V$ is a
        reflexive Banach space, $H$ is a Hilbert space and $j:V\to H$
        is a dense embedding, i.e., $j$ is a linear, injective and bounded
        operator with $\overline{j(V)}^{\|\cdot \|_H}=H$. 
	Let $R:H\rightarrow H^*$ be the Riesz
	isomorphism with respect~to~${(\cdot,\cdot)_H}$. As $j$ is a dense
	embedding the adjoint \mbox{$j^*:H^*\rightarrow V^*$} and
	therefore $e:=j^*Rj: V \rightarrow V^*$ are embeddings as
	well. We call $e$ the
	\textbf{canonical embedding} of $(V,H,j)$. Note that
	\begin{align*} 
          \langle ev,w\rangle_V=(jv,jw)_H\quad\text{ for all }v,w\in
          V.
	\end{align*} 
	For an evolution triple $(V,H,j)$, $I:=\left(0,T\right)$,
        $T<\infty$, and $1\leq p\leq q\leq \infty$ we define operators
        ${\boldsymbol{j}:L^p(I,V)\to L^p(I,H)}\colon \boldsymbol u \to
        \boldsymbol{j}\boldsymbol{u} $ and
        $\boldsymbol{j}^*:L^{q'}(I,H^*)\to L^{q'}(I,V^*) \colon
        \boldsymbol v \to \boldsymbol{j}^*\boldsymbol{v}$, where
        $\boldsymbol{j}\boldsymbol{u} $ and
        $\boldsymbol{j}^*\boldsymbol{v} $ are 
        for every $\boldsymbol{u}\in L^p(I,V)$ and
        ${\boldsymbol{v}\in L^{q'}(I,H^*)}$ given via
	\begin{alignat*}{2}
          (\boldsymbol{j}\boldsymbol{u})(t)&:=j(\boldsymbol{u}(t))&&\quad\text{ in }H\quad\;\text{ for a.e. }t\in I,\\
          (\boldsymbol{j}^*\boldsymbol{v})(t)&:=j^*(\boldsymbol{v}(t))&&\quad\text{ in }V^*\quad\text{ for a.e. }t\in I.
        \end{alignat*}
        It is shown in \cite[Proposition~2.19]{alex-rose-hirano} that both $\boldsymbol{j}$
        and $\boldsymbol{j}^*$ are embeddings, which we call
        \textbf{induced embeddings}. 
	Moreover, we define the  intersection space
	\begin{align*}
			L^p(I,V)\cap_{\boldsymbol{j}}L^q(I,H):=\{\boldsymbol{u}\in L^p(I,V)\fdg \boldsymbol{j}\boldsymbol{u}\in L^q(I,H)\}, 
	\end{align*}
	which forms a Banach space equipped with the canonical sum norm
	\begin{align*}
		\|\cdot\|_{	L^p(I,V)\cap_{\boldsymbol{j}}L^q(I,H)}:=\|\cdot\|_{L^p(I,V)}+\|\boldsymbol{j}(\cdot)\|_{L^q(I,H)}.
	\end{align*}
	If $1<p\leq q<\infty$, then $L^p(I,V)\cap_{\boldsymbol{j}}L^q(I,H)$ is additionally reflexive. Furthermore, for each  $\boldsymbol{u}^*\in (L^p(I,V)\cap_{\boldsymbol{j}}L^q(I,H))^*$ there exist functions $\boldsymbol{g}\in L^{p'}(I,V^*)$ and $\boldsymbol{h}\in L^{q'}(I,H^*)$, such that for every $\boldsymbol{u}\in L^p(I,V)\cap_{\boldsymbol{j}}L^q(I,H)$ it holds
	\begin{align}
		\langle \boldsymbol{u}^*,\boldsymbol{u}\rangle_{L^p(I,V)\cap_{\boldsymbol{j}}L^q(I,H)}=\int_I{\langle\boldsymbol{g}(t)+(\boldsymbol{j}^*\boldsymbol{h})(t),\boldsymbol{u}(t)\rangle_V\,dt},\label{eq:dual}
	\end{align}
	and
        $\|\boldsymbol{u}^*\|_{(L^p(I,V)\cap_{\boldsymbol{j}}L^q(I,H))^*}:=\|\boldsymbol{g}\|_{L^{p'}(I,V^*)}+\|\boldsymbol{h}\|_{L^{q'}(I,H^*)}$,
        i.e., $(L^p(I,V)\cap_{\boldsymbol{j}}L^q(I,H))^*$ is
        isometrically isomorphic to the sum
        $L^{p'}(I,V^*)+\boldsymbol{j}^*(L^{q'}(I,H^*))$
        (cf.~\cite[Kapitel I, Bemerkung 5.13 \& Satz 5.13]{GGZ}), which is a Banach space equipped with the  norm
	\begin{align*}
		\|\boldsymbol{f}\|_{L^{p'}(I,V^*)+\boldsymbol{j}^*(L^{q'}(I,H^*))}:=\min_{\substack{\boldsymbol{g}\in L^{p'}(I,V^*)\\\boldsymbol{h}\in L^{q'}(I,H^*)\\\boldsymbol{f}=\boldsymbol{g}+\boldsymbol{j}^*\boldsymbol{h}}}{\|\boldsymbol{g}\|_{L^{p'}(I,V^*)}+\|\boldsymbol{h}\|_{L^{q'}(I,H^*)}}.
	\end{align*}
	
	\begin{defn}[Generalized time derivative]\label{2.15}
		Let $(V,H,j)$ be an evolution triple, ${I:=\left(0,T\right)}$, $T<\infty$, and $1< p\leq q<\infty$. A function
		$\boldsymbol{u}\in L^p(I,V)\cap_{\boldsymbol{j}}L^q(I,H)$ possesses a \textbf{generalized time derivative with respect to  the canonical embedding $e$ of $(V,H,j)$} if there exists a function $\boldsymbol{u}^*\in L^{p'}(I,V^*)+\boldsymbol{j}^*(L^{q'}(I,H^*))$ such that for all $v\in V$ and $\varphi\in C_0^\infty(I)$
		\begin{align*}
		-\int_I{(j(\boldsymbol{u}(s)),jv)_H\varphi^\prime(s)\,ds}=
		\int_I{\langle\boldsymbol{u}^*(s),v\rangle_{V}\varphi(s)\,ds}.
		\end{align*}
		As this function $\boldsymbol{u}^*\in L^{p'}(I,V^*)+\boldsymbol{j}^*(L^{q'}(I,H^*))$ is unique (cf. \cite[Proposition 23.18]{Zei90A}),
		$\frac{d_e\boldsymbol{u}}{dt}:=\boldsymbol{u}^*$ is well-defined. By 
		\begin{align*}
		\mathbfcal{W}^{1,p,q}_e(I,V,H):=\Big\{\boldsymbol{u}\in L^p(I,V)\cap_{\boldsymbol{j}}L^q(I,H)\;\Big|\;\exists\, \frac{d_e\boldsymbol{u}}{dt}\in L^{p'}(I,V^*)+\boldsymbol{j}^*(L^{q'}(I,H^*))\Big\}
		\end{align*}
		we denote the \textbf{Bochner-Sobolev space with respect to $e$}.
	\end{defn}

	\begin{prop}[Formula of integration by parts]\label{2.16}
		Let $(V,H,j)$ be an evolution triple, $I:=\left(0,T\right)$, $T<\infty$,  and $1<p\leq q<\infty$. Then, it holds:
		\begin{description}[(iii)]
			\item[(i)] The space $\mathbfcal{W}^{1,p,q}_e(I,V,H)$ forms a Banach space equipped with the norm 
			\begin{align*}
			\|\cdot\|_{\mathbfcal{W}^{1,p,q}_e(I,V,H)}:=\|\cdot\|_{L^p(I,V)\cap_{\boldsymbol{j}}L^q(I,H)}+\left\|\frac{d_e\,\cdot}{dt}\right\|_{L^{p'}(I,V^*)+{\boldsymbol{j}^*}(L^{q'}(I,H^*))}.
			\end{align*}
			\item[(ii)] Given $\boldsymbol{u}\in \mathbfcal{W}^{1,p,q}_e(I,V,H)$ the
			function $\boldsymbol{j}\boldsymbol{u}\in L^q(I,H)$
                        possesses a unique representation
			$\boldsymbol{j}_c\boldsymbol{u}\in C^0(\overline{I},H)$, and the resulting mapping
			${\boldsymbol{j}_c:\mathbfcal{W}^{1,p,q}_e(I,V,H)\rightarrow
			C^0(\overline{I},H)}$ is an embedding.
			\item[(iii)] \textbf{Generalized integration by parts formula:} It holds
			\begin{align*}
			\int_{t'}^t{\Big\langle
				\frac{d_e\boldsymbol{u}}{dt}(s),\boldsymbol{v}(s)\Big\rangle_V\,ds}
			=\left[((\boldsymbol{j}_c\boldsymbol{u})(s), (\boldsymbol{j}_c
			\boldsymbol{v})(s))_H\right]^{s=t}_{s=t'}-\int_{t'}^t{\Big\langle
				\frac{d_e\boldsymbol{v}}{dt}(s),\boldsymbol{u}(s)\Big\rangle_V\,ds},
			\end{align*}
			for all $\boldsymbol{u},\boldsymbol{v}\in \mathbfcal{W}^{1,p,q}_e(I,V,H)$ and
			$t,t'\in \overline{I}$ with $t'\leq t$.
		\end{description}  
	\end{prop}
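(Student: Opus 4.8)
\textbf{Proof proposal for Proposition~\ref{2.16}.}

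The plan is to establish the three parts in the natural order, reducing everything to the scalar integration-by-parts identity (iii), from which both the continuous representative in (ii) and the completeness in (i) follow.

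For part (i), completeness of $\mathbfcal{W}^{1,p,q}_e(I,V,H)$ is the standard argument for graph norms: given a Cauchy sequence $(\boldsymbol{x}_n)$ in $\mathbfcal{W}^{1,p,q}_e(I,V,H)$, it is Cauchy in the reflexive Banach space $L^p(I,V)\cap_{\boldsymbol{j}}L^q(I,H)$ and the sequence $(\tfrac{d_e\boldsymbol{x}_n}{dt})$ is Cauchy in the Banach space $L^{p'}(I,V^*)+\boldsymbol{j}^*(L^{q'}(I,H^*))$; one obtains limits $\boldsymbol{x}$ and $\boldsymbol{x}^*$ in the respective spaces, and passing to the limit in the defining distributional identity $-\int_I (j(\boldsymbol{x}_n(s)),jv)_H\varphi'(s)\,ds=\int_I\langle\tfrac{d_e\boldsymbol{x}_n}{dt}(s),v\rangle_V\varphi(s)\,ds$ — which is legitimate because $\boldsymbol{j}$ and $\boldsymbol{j}^*$ are embeddings and $\varphi,\varphi'$ are bounded — shows $\boldsymbol{x}^*=\tfrac{d_e\boldsymbol{x}}{dt}$, so $\boldsymbol{x}\in\mathbfcal{W}^{1,p,q}_e(I,V,H)$ with convergence in the graph norm. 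That the graph norm is indeed a norm is immediate from uniqueness of the generalized derivative.

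For parts (ii) and (iii), the core is a mollification/density argument. First I would prove the integration-by-parts formula on a dense subclass: for $\boldsymbol{x},\boldsymbol{y}$ that are, say, smooth (or piecewise affine) in time with values in $V$, the identity $\int_{t'}^t\langle\tfrac{d_e\boldsymbol{x}}{dt}(s),\boldsymbol{y}(s)\rangle_V\,ds=[((\boldsymbol{j}\boldsymbol{x})(s),(\boldsymbol{j}\boldsymbol{y})(s))_H]_{s=t'}^{s=t}-\int_{t'}^t\langle\tfrac{d_e\boldsymbol{y}}{dt}(s),\boldsymbol{x}(s)\rangle_V\,ds$ reduces, via \eqref{eq:2.14}, to the fundamental theorem of calculus applied to $s\mapsto(j(\boldsymbol{x}(s)),j(\boldsymbol{y}(s)))_H$. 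The main obstacle — and the step requiring genuine care — is the density of such time-regular functions in $\mathbfcal{W}^{1,p,q}_e(I,V,H)$ with respect to the graph norm: one mollifies in time after extending off $I$ (the Steklov/Friedrichs averaging), checks that mollification commutes with $\boldsymbol{j}$ and with the generalized derivative, and controls boundary effects near $0$ and $T$; this is exactly where $p\le q<\infty$ and reflexivity are used. Granting this density, put $\boldsymbol{y}=\boldsymbol{x}$ in the a~priori estimate coming from (iii) to get, for smooth-in-time $\boldsymbol{x}$,
\begin{align*}
\tfrac12\|(\boldsymbol{j}\boldsymbol{x})(t)\|_H^2=\tfrac12\|(\boldsymbol{j}\boldsymbol{x})(t')\|_H^2+\int_{t'}^t\Big\langle\tfrac{d_e\boldsymbol{x}}{dt}(s),\boldsymbol{x}(s)\Big\rangle_V\,ds,
\end{align*}
whence $\sup_{t\in\overline I}\|(\boldsymbol{j}\boldsymbol{x})(t)\|_H^2\le C\|\boldsymbol{x}\|_{\mathbfcal{W}^{1,p,q}_e(I,V,H)}^2$ and, applied to differences, $t\mapsto(\boldsymbol{j}\boldsymbol{x})(t)$ is uniformly continuous into $H$; this yields the estimate $\|\boldsymbol{j}_c\boldsymbol{x}\|_{C^0(\overline I,H)}\le C\|\boldsymbol{x}\|_{\mathbfcal{W}^{1,p,q}_e(I,V,H)}$. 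Then for general $\boldsymbol{x}\in\mathbfcal{W}^{1,p,q}_e(I,V,H)$ one approximates by time-regular $\boldsymbol{x}_k\to\boldsymbol{x}$ in the graph norm; by the estimate, $(\boldsymbol{j}\boldsymbol{x}_k)$ is Cauchy in $C^0(\overline I,H)$, its uniform limit $\boldsymbol{j}_c\boldsymbol{x}$ is continuous, agrees with $\boldsymbol{j}\boldsymbol{x}$ a.e. (since $L^q$-convergence forces a.e.\ convergence along a subsequence), hence is the unique continuous representative, and $\boldsymbol{j}_c$ is an embedding. Passing to the limit in the integration-by-parts formula for the pairs $(\boldsymbol{x}_k,\boldsymbol{y}_k)$ — the bulk integrals converge by the duality pairing between $L^{p'}(I,V^*)+\boldsymbol{j}^*(L^{q'}(I,H^*))$ and $L^p(I,V)\cap_{\boldsymbol{j}}L^q(I,H)$, the boundary terms by $C^0(\overline I,H)$-convergence of $\boldsymbol{j}_c\boldsymbol{x}_k$ and $\boldsymbol{j}_c\boldsymbol{y}_k$ — gives (iii) in full generality, completing the proof. (Alternatively one may simply cite \cite{Zei90A,GGZ,KR19} for these by-now classical facts; I would still present the mollification argument as it makes the dependence on $1<p\le q<\infty$ transparent.)\hfill$\qed$
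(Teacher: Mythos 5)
Your sketch is correct and follows exactly the classical route of the reference the paper cites for this result (\cite[Kapitel IV, Satz 1.16 \& Satz 1.17]{GGZ}); the paper itself offers no proof beyond that citation. The graph-norm completeness argument, the density of time-regular functions, the energy identity yielding the $C^0(\overline{I},H)$ a~priori bound applied to differences, and the final limit passage are precisely the standard steps, and you correctly flag the density step as the one requiring genuine work.
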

	
	\begin{proof}
		See \cite[Kapitel IV, Satz 1.16 \&  Satz 1.17]{GGZ}.\hfill $\qed$
	\end{proof}

	For an evolution triple $(V,H,j)$, $I:=\left(0,T\right)$, $T<\infty$, and $1<p\leq q< \infty$ we call an operator $\mathbfcal{A}:L^p(I,V)\cap_{\boldsymbol{j}}L^q(I,H)\to (L^p(I,V)\cap_{\boldsymbol{j}}L^q(I,H))^*$ \textbf{induced} by a family of operators $A(t):V\to V^*$, $t\in I$, if for every $\boldsymbol{u},\boldsymbol{v}\in L^p(I,V)\cap_{\boldsymbol{j}}L^q(I,H)$ it holds
		\begin{align}
			\langle\mathbfcal{A}\boldsymbol{u},\boldsymbol{v}\rangle_{L^p(I,V)\cap_{\boldsymbol{j}}L^q(I,H)}=\int_I{\langle A(t)(\boldsymbol{u}(t)),\boldsymbol{v}(t)\rangle_V\,dt}.\label{eq:induced}
		\end{align}
		
	\begin{rmk}[Need for $L^p(I,V)\cap_{\boldsymbol{j}}L^q(I,H)$]
          Note that an operator family $A(t):V\to V^*$, $t\in I$ can
          define an induced operator in different spaces. In
          \cite{alex-rose-hirano}, \cite{K19} the induced operator $\mathbfcal{A}$
          is considered as an operator from
          $L^p(I,V)\cap_{\boldsymbol{j}}L^\infty(I,H)$ into
          $(L^p(I,V))^*$. Here, we consider the induced operator 
          $\mathbfcal{A}$ as an operator from
          $L^p(I,V)\cap_{\boldsymbol{j}}L^q(I,H)$ into
          $(L^p(I,V)\cap_{\boldsymbol{j}}L^q(I,H))^*$, which is  more
          general and enables us to consider
          operator families with significantly worse growth
          behavior. Here, the so-called \textit{Temam modification}
          $\hat{B}:X\to X^*$, tracing back to \cite{Tem68}, \cite{Tem77},
          of the convective term $B :X\to X^*$ defined in \eqref{eq:sb},
          defined for $p>\frac{3d+2}{d+2}$ and all ${u},{v}\in X$ via
          \begin{align*}
            \langle
            \hat{B}{u},{v}\rangle_X=
            \frac{1}{2}\int_\Omega{{v}\otimes{u}:
            \nabla{u}\,dx}-
            \frac{1}{2}\int_\Omega{{u}\otimes{u}:
            {\nabla}{v}\,dx},
          \end{align*}
          serves as a prototypical example. In fact, following
          \cite[Example 5.1]{alex-rose-hirano}, one can prove that ${B:X\to X^*}$ 
          satisfies for $d=3$ and $p\ge \frac{11}{5}$ the estimate
          \begin{gather}\label{eq:esti}
            \|B{u}\|_{X^*}\leq
            c(1+\|{u}\|_Y)(1+\|{u}\|_X^{p-1}),
          \end{gather}
          for all ${u}\in X$ and that corresponding induced
          operator $\mathbfcal{B}$ is well-defined and bounded as an
          operator from $L^p(I,X)\cap L^\infty(I,Y)$ to
          $(L^p(I,X))^*$. Regrettably, for the remaining term in
          Temam's modification, i.e., for the operator
          $\tilde{B}:=\hat{B}-\frac{1}{2}B:X\to X^*$, we can prove
          \eqref{eq:esti} for $d=3$ only for $p>\frac{13}{5}$. In
          order to reach 
          $p>\frac{11}{5}$ for $d=3$, one is forced to use a larger
          target space, i.e., we view the induced operator of
          $\tilde B$ as an operator from $ L^p(I,X)\cap L^q(I,Y)$ to $ (L^p(I,X)\cap L^q(I,Y))^*$,
          where $q\in [p,\infty)$ is specified in the proof Proposition~\ref{4.7}.
	\end{rmk}
	
                \begin{defn}[Weak solution]
                  \label{2.17}
                  Let $(V,H,j)$ be an evolution triple,
                  $I:=\left(0,T\right)$, $T<\infty$, and
                  $1<p\leq q< \infty$. Moreover, let
                  ${u}_0\in H$ be an initial value,
                  $\boldsymbol{f}\in L^{p'}(I,V^*)$ a right-hand side,
                  and
                  $\mathbfcal{A}:L^p(I,V)\cap_{\boldsymbol{j}}L^q(I,H)\to
                  (L^p(I,V)\cap_{\boldsymbol{j}}L^q(I,H))^*$ induced
                  by a family of operators $A(t):V\to V^*$, $t\in
                  I$. A function
                  $\boldsymbol u \in \mathbfcal{W}^{1,p,q}_e(I,V,H)$
                  is called \textbf{weak solution} of the initial
                  value problem \eqref{eq:1.1} if
                  $(\boldsymbol{j}_c\boldsymbol{u})(0)={u}_0$
                  in $H $ and for all $\boldsymbol{\phi}\in
                  C^1_0(I,V)$ there holds 

		\begin{align*}
                    \int_I{\Big\langle\frac{d_e\boldsymbol{u}}{dt}(t),\boldsymbol{\phi}(t)\Big\rangle_V\,dt}+\int_I{\langle A(t)(\boldsymbol{u}(t)),\boldsymbol{\phi}(t)\rangle_V\,dt}&=\int_I{\langle \boldsymbol{f}(t),\boldsymbol{\phi}(t)\rangle_V\,dt}.
		\end{align*}
         \end{defn}
              
        Here, the initial condition is well-defined since due to
        Proposition \ref{2.16} (ii) there exists the unique continuous representation
        $\boldsymbol{j}_c\boldsymbol{u}\in C^0(\overline{I},H)$ of $\boldsymbol u \in \mathbfcal{W}^{1,p,q}_e(I,V,H)$.

	\section{Quasi non-conforming approximation}
	\label{sec:3}
	In this section we introduce the concept of quasi non-conforming approximations.
	
	\begin{defn}[Quasi non-conforming approximation]\label{3.1}
		Let $(V,H,j) $ and $(X,Y,j)$ be evolution triples such that 
		$V\subseteq X$ with $\|\cdot\|_V=\|\cdot\|_X$ in $V$
                and ${H\subseteq
                  Y}$~with~${(\cdot,\cdot)_H=(\cdot,\cdot)_Y}$~in~${H\times
                  H}$. Moreover, let $I:=\left(0,T\right)$,
                $T<\infty$, and let $1<p<\infty$. We call a sequence
                of closed subspaces $(V_n)_{n\in\mathbb{N}}$ of $X$ a \textbf{quasi non-conforming approximation of $V$ in $X$}, if the following properties are satisfied:
		\begin{description}[{(ii)}]
			\item[\textbf{(QNC.1)}] \hypertarget{QNC.1}{} There exists a dense subset $C\subseteq V$, such that for each $v\in C$ there exist elements $v_n\in V_n$, $n\in\mathbb{N}$, such that $v_n\to v $ in $X$~$(n\to\infty)$.
			\item[\textbf{(QNC.2)}] \hypertarget{QNC.2}{} For each sequence $\boldsymbol{u}_n\in L^p(I,V_{m_n})$, $n\in \mathbb{N}$, where ${(m_n)_{n\in\mathbb{N}}\subseteq\mathbb{N}}$ with $m_n\to \infty$ $(n\to\infty)$, from $\boldsymbol{u}_n\rightharpoonup \boldsymbol{u}$ in $L^p(I,X)$~$(n\to\infty)$, it follows that $\boldsymbol{u}\in L^p(I,V)$.
		\end{description}
	\end{defn}
	The next proposition shows that the notion of a quasi
        non-conforming approximation is indeed a generalization of the
        usual notion of a conforming approximation. In Section
        \ref{sec:7} we will show that our motivating example, namely
        the approximation of divergence-free Sobolev functions through
	discretely divergence-free finite element spaces, fits into the
        framework of quasi non-conforming approximations.

	\begin{prop}\label{ex} Let $(X,Y,j)$ and $(V,H,j)$ be as in Definition \ref{3.1}. Then, it holds:
		\begin{description}[{(ii)}]
			\item[(i)] The constant approximation $V_n=V$,
                          $n\in \mathbb{N}$, is a quasi non-conforming
                          approximation of $V$ in $X$.
			\item[(ii)] If $(V_n)_{n\in \mathbb{N}}$ is a conforming approximation 
 of $V$, i.e., $(V_n)_{n\in \mathbb{N}}$ satisfy (\hyperlink{C.1}{C.1}) and (\hyperlink{C.2}{C.2}), then $(V_n)_{n\in \mathbb{N}}$ is a quasi non-conforming approximation of~$V$~in~$X$.
		\end{description}
	\end{prop}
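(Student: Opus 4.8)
The plan is to verify the two defining properties (QNC.1) and (QNC.2) of Definition~\ref{3.1} in each of the two cases, relying throughout on the elementary observation that $V$ is a \emph{closed} subspace of $X$: indeed, $V$ is complete with respect to $\|\cdot\|_V$, and since $\|\cdot\|_V=\|\cdot\|_X$ on $V$ it is complete, hence closed, with respect to $\|\cdot\|_X$. I would then note that this makes $L^p(I,V)$ an isometric copy of a closed linear subspace of $L^p(I,X)$: if $\boldsymbol{y}_k\to\boldsymbol{y}$ in $L^p(I,X)$ with $\boldsymbol{y}_k\in L^p(I,V)$, a subsequence converges almost everywhere in $X$, so $\boldsymbol{y}(t)\in V$ for a.e.\ $t\in I$, and $\|\boldsymbol{y}(t)\|_V=\|\boldsymbol{y}(t)\|_X$ then gives $\boldsymbol{y}\in L^p(I,V)$. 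Being convex and closed, $L^p(I,V)$ is weakly closed in $L^p(I,X)$ by Mazur's lemma, and this is the single fact doing all the work in (QNC.2).

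For part~(i) I would take $D:=V$, which is trivially dense in $V$, and for each $v\in D$ the constant choice $v_n:=v\in V_n=V$ satisfies $v_n\to v$ in $X$ since $\|v-v_n\|_X=0$; this yields (QNC.1), and en route it confirms that each $V_n=V$ is a closed subspace of $X$, as required by Definition~\ref{3.1}. For (QNC.2), any sequence $\boldsymbol{x}_n\in L^p(I,V_{m_n})=L^p(I,V)$ with $\boldsymbol{x}_n\rightharpoonup\boldsymbol{x}$ in $L^p(I,X)$ has its weak limit in the weakly closed subspace $L^p(I,V)$, so $\boldsymbol{x}\in L^p(I,V)$.

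For part~(ii) I would take $D:=\bigcup_{n\in\mathbb{N}}V_n$, which is dense in $V$ by (C.2). Given $v\in D$, there is some $N\in\mathbb{N}$ with $v\in V_N$, and by the monotonicity (C.1) we get $v\in V_n$ for all $n\ge N$; setting $v_n:=v$ for $n\ge N$ and $v_n:=0\in V_n$ for $n<N$ produces a sequence $v_n\in V_n$ that is eventually constant equal to $v$, hence $v_n\to v$ in $X$, which is (QNC.1). For (QNC.2), since $V_{m_n}\subseteq V$ by (C.1), every $\boldsymbol{x}_n\in L^p(I,V_{m_n})$ already lies in $L^p(I,V)$, and exactly as in part~(i) weak closedness of $L^p(I,V)$ in $L^p(I,X)$ forces the weak limit $\boldsymbol{x}$ into $L^p(I,V)$.

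I do not expect a genuine obstacle here: the proposition is essentially a consistency check that the new notion generalises the classical one. The only points requiring mild care are the passage from "$V$ closed in $X$" to "$L^p(I,V)$ weakly closed in $L^p(I,X)$" (combining the almost-everywhere subsequence argument for strong closedness with Mazur's lemma), and, in (QNC.1) of part~(ii), remembering that for the finitely many indices $n<N$ with $v\notin V_n$ one must still name \emph{some} element of $V_n$, e.g.\ $0$, which is harmless for the limit.
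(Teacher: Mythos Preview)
Your proof is correct and follows essentially the same route as the paper: for (QNC.1) in part~(ii) you and the paper both take $D=\bigcup_{n}V_n$ and the eventually constant sequence $v_n$, while for part~(i) you take the trivial choice $D=V$, $v_n=v$. The only difference is that where the paper dismisses (QNC.2) as ``obvious'' (and part~(i) as following ``right from the definition''), you actually supply the argument via closedness of $V$ in $X$, strong closedness of $L^p(I,V)$ in $L^p(I,X)$, and Mazur's lemma---which is a welcome addition rather than a departure.
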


	\begin{proof} 
		\textbf{ad (i)} Follows right from the definition.
		
		\textbf{ad (ii)} We set $C:=\bigcup_{n\in \mathbb{N}}{V_n}$. Then, for each $v\in C$ there exists an integer $n_0\in \mathbb{N}$ such that $v\in V_n$ for every $n\ge n_0$. Therefore, the sequence $v_n\in V_n$, $n\in 
\mathbb{N}$, given via $v_n:=0$ if $n<n_0$ and $v_n:=v$ if $n\ge n_0$, satisfies $v_n\to v$ in $V$ $(n\to \infty)$, i.e., $(V_n)_{n\in \mathbb{N}}$ satisfies (\hyperlink{QNC.1}{QNC.1}). Apart from that, $(V_n)_{n\in 
\mathbb{N}}$ obviously fulfills (\hyperlink{QNC.2}{QNC.2}).\hfill$\qed$
	\end{proof}

        The following proposition will be crucial in verifying that the
induced operator $\mathbfcal A$ of a family of operators $(A(t))_{t\in
I}$ is quasi non-conforming Bochner pseudo-monotone (cf.~Definition~\ref{3.4}). 
	\begin{prop}\label{3.2}
          Let $(V,H,j) $ and $(X,Y,j)$ be as in Definition \ref{3.1}
          and let $(V_n)_{n\in\mathbb{N}}$ be a quasi non-conforming
          approximation of $V$ in $X$. Then, the following statements hold true:
		\begin{description}[{(iii)}]
			\item[(i)] For a sequence $v_n\in V_{m_n}$,
                          $n\in \mathbb{N}$, where
                          $(m_n)_{n\in\mathbb{N}}\subseteq\mathbb{N}$
                          with $m_n\to \infty$ $(n\to \infty)$, from
                          $v_n\rightharpoonup v$ in $X$ $(n\to
                          \infty)$, it follows that $v\in V$. 
			\item[(ii)] For a sequence $v_n\in V_{m_n}$, $n\in \mathbb{N}$, where $(m_n)_{n\in\mathbb{N}}\subseteq\mathbb{N}$ with $m_n\to \infty$ $(n\to \infty)$, with $\sup_{n\in \mathbb{N}}{\|v_n\|_X}<\infty$, and $v\in V$ the following statements are equivalent:
			\begin{description}
				\item[(a)]  $v_n\rightharpoonup v$ in $X$ $(n\to\infty)$.
				\item[(b)] $P_Hjv_n\rightharpoonup jv$ in $H$ $(n\to\infty)$, where $P_H\!:\!Y\!\to\! H$ is the orthogonal projection of~$Y$~into~$H$.
			\end{description}
			\item[(iii)] For each $\eta\in H$ there exists a sequence $v_n\in V_{m_n}$, $n\in \mathbb{N}$, where $(m_n)_{n\in\mathbb{N}}\subseteq\mathbb{N}$ with $m_n\to \infty$ $(n\to \infty)$, such that $jv_n\to \eta$ in $Y$ $(n\to \infty)$.
		\end{description}
	\end{prop}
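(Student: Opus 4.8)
The plan is to obtain the three assertions in the order (i), then the implication (a)$\Rightarrow$(b) of (ii), then (b)$\Rightarrow$(a) of (ii) by a bootstrapping argument, and finally (iii); the only genuine care is in keeping the two inner products $(\cdot,\cdot)_H$ and $(\cdot,\cdot)_Y$ (and the corresponding norms) apart and in exploiting the density of $j(V)$ in $H$, no tool beyond (QNC.1)--(QNC.2), reflexivity of $X$, and elementary Hilbert-space facts being needed. For (i) I would reduce to (QNC.2) by testing with functions that are constant in time: given $v_n\in V_{m_n}$ with $v_n\rightharpoonup v$ in $X$, put $\boldsymbol{x}_n\equiv v_n\in L^p(I,V_{m_n})$ and $\boldsymbol{x}\equiv v$; since $I$ is bounded, for every $\boldsymbol{\phi}\in L^{p'}(I,X^*)=(L^p(I,X))^*$ one has $\int_I\langle\boldsymbol{\phi}(t),v_n\rangle_X\,dt=\langle\int_I\boldsymbol{\phi}(t)\,dt,v_n\rangle_X\to\langle\int_I\boldsymbol{\phi}(t)\,dt,v\rangle_X$, hence $\boldsymbol{x}_n\rightharpoonup\boldsymbol{x}$ in $L^p(I,X)$. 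Then (QNC.2) yields $\boldsymbol{x}\in L^p(I,V)$, which (as $I$ is bounded and $\boldsymbol{x}$ constant) means $v\in V$.

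For the implication (a)$\Rightarrow$(b) of (ii): since $j$ is bounded from $X$ into $Y$ and $\|P_Hy\|_H=\|P_Hy\|_Y\le\|y\|_Y$, the sequence $(P_Hjv_n)_{n\in\mathbb{N}}$ is bounded in $H$. For $w\in V$ the elementary identity $(P_Hy,h)_H=(y,h)_Y$, valid for $y\in Y$ and $h\in H$ (because $y-P_Hy$ is $(\cdot,\cdot)_Y$-orthogonal to $H$ and the two inner products agree on $H$), gives $(P_Hjv_n,jw)_H=(jv_n,jw)_Y$; as $u\mapsto(ju,jw)_Y$ is a bounded linear functional on $X$, the assumption $v_n\rightharpoonup v$ in $X$ forces $(jv_n,jw)_Y\to(jv,jw)_Y=(jv,jw)_H$, the last equality because $jv,jw\in H$. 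Since $j(V)$ is dense in $H$ and $(P_Hjv_n)$ is bounded in $H$, this convergence against a dense set upgrades to $P_Hjv_n\rightharpoonup jv$ in $H$. For (b)$\Rightarrow$(a) I would argue by subsequences: $(v_n)$ is bounded in the reflexive space $X$, so any subsequence has a further subsequence $v_{n_k}\rightharpoonup\tilde v$ in $X$; by (i) $\tilde v\in V$, and by the implication just proven $P_Hjv_{n_k}\rightharpoonup j\tilde v$ in $H$, which combined with (b) and injectivity of $j$ gives $\tilde v=v$. Since every weak cluster point of the bounded sequence equals $v$, it follows that $v_n\rightharpoonup v$ in $X$.

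For (iii) I would first record that $j(D)$ is dense in $H$: $D$ is dense in $V$, $j\colon V\to H$ is continuous, hence $j(D)$ is dense in $j(V)$ in the norm of $H$, and $j(V)$ is dense in $H$. Then I would run a diagonal construction. For each $k\in\mathbb{N}$ pick $d^{(k)}\in D$ with $\|jd^{(k)}-h\|_H<\tfrac1{2k}$; by (QNC.1) there are $d^{(k)}_n\in V_n$ with $d^{(k)}_n\to d^{(k)}$ in $X$, hence $jd^{(k)}_n\to jd^{(k)}$ in $Y$, so one may choose $N_k$ strictly increasing with $\|jd^{(k)}_n-jd^{(k)}\|_Y<\tfrac1{2k}$ for all $n\ge N_k$. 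Setting $m_n:=n$, $v_n:=0$ for $n<N_1$ and $v_n:=d^{(k(n))}_n\in V_n$ for $n\ge N_1$, where $k(n):=\max\{k\in\mathbb{N}\mid N_k\le n\}\to\infty$ $(n\to\infty)$, one obtains $\|jv_n-h\|_Y\le\|jd^{(k(n))}_n-jd^{(k(n))}\|_Y+\|jd^{(k(n))}-h\|_H<\tfrac1{k(n)}\to 0$ (using $\|jd^{(k)}-h\|_Y=\|jd^{(k)}-h\|_H$ since both members lie in $H$), as required.

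The step I expect to require the most care is (a)$\Rightarrow$(b): it is there that one must combine three facts at once — that $v_n$ enters only through the $Y$-pairing after applying $P_H$, that testing against $j(V)$ suffices by density in $H$, and that $(\cdot,\cdot)_H$ and $(\cdot,\cdot)_Y$ agree on $H$ — whereas (i) is a direct consequence of (QNC.2), (b)$\Rightarrow$(a) is a routine reflexivity argument, and (iii) is a standard diagonalisation. \qed
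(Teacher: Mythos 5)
Your proposal is correct and follows essentially the same route as the paper: (i) is reduced to (QNC.2) via constant-in-time functions, (a)$\Rightarrow$(b) is the weak continuity of $j$ and $P_H$ (which you unfold by testing against the dense set $j(V)$), (b)$\Rightarrow$(a) uses reflexivity of $X$, part (i), injectivity of $j$ on $V$ and the subsequence convergence principle, and (iii) combines density of $j(V)$ (resp.\ $j(D)$) in $H$ with (QNC.1) and a diagonal selection, exactly as in the paper. The only cosmetic difference is that your diagonalisation in (iii) yields $m_n=n$ directly, whereas the paper extracts an unspecified increasing sequence $(m_n)$; both are fine.
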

	
	\begin{proof}
		\textbf{ad (i)} Immediate consequence of (\hyperlink{QNC.2}{QNC.2}).
		
		\textbf{ad (ii)} \textbf{(a) $\boldsymbol{\Rightarrow}$ (b)} 
		 Follows from the weak continuity of $j:X\to Y$ and
                 $P_H:Y\to H$.

                 \textbf{(b) $\boldsymbol{\Rightarrow}$  (a)} From the reflexivity of $X$, we obtain a subsequence $(v_n)_{n\in \Lambda}$, with $\Lambda\subseteq \mathbb{N}$, and an element $\tilde{v}\in X$, such that $v_n\rightharpoonup\tilde{v}$ in $X$ $(\Lambda\ni n\to \infty)$. Due to \textbf{(i)} we infer $\tilde{v}\in V$.  From the weak continuity of $j:X\to Y$ and $P_H:Y\to H$ we conclude $P_Hjv_n\rightharpoonup P_Hj\tilde{v}=j\tilde{v}$ in $H$  $(\Lambda\ni n\to \infty)$. In consequence, we have $j\tilde{v}=jv$ in $H$, which in virtue of the injectivity of $j:V\to H$ 
implies that $\tilde{v}=v$ in $V$, and therefore
		 \begin{align}
		 	v_n\;\;\rightharpoonup \;\;v\quad\text{ in }X\quad(\Lambda\ni n\to \infty).\label{eq:3.3}
		 \end{align}
    Since this argumentation remains valid for each subsequence of $(v_n)_{n\in \mathbb{N}}\subseteq X$, $v\in V$ is weak accumulation point of each 
subsequence of $(v_n)_{n\in \mathbb{N}}\subseteq X$. Therefore, the standard convergence principle (cf.~\cite[Kap. I, Lemma 5.4]{GGZ}) guarantees that \eqref{eq:3.3} remains true even if $\Lambda=\mathbb{N}$.
		 
   \textbf{ad (iii)} Since $(V,H,j)$ is an evolution triple, $j(V)$
    is dense in $H$. As a result, for fixed $\eta\in H$ there exists a
    sequence $(v_n)_{n\in \mathbb{N}}\subseteq V$, such that
    $\|\eta-jv_n\|_H\leq 2^{-n}$ for all $n\in \mathbb{N}$. Due to
    (\hyperlink{QNC.1}{QNC.1}) there exist a sequence $(w_n)_{n\in
      \mathbb N} \subseteq C$, such that $\|v_n-w_n\|_V\le 2^{-n-1}$ for
    all $n\in \mathbb{N}$  and a double sequence
    $(v_k^n)_{n,k\in \mathbb{N}}\subseteq X$, with $v_k^n\in V_k$ for
    all $k,n\in \mathbb{N}$, such that $v_k^n\to w_n$ in $X$
    $(k\to \infty)$ for all $n\in \mathbb{N}$. Thus, for each
    $n\in \mathbb{N}$ there exists $m_n\in \mathbb{N}$, such that
    $\|w_n-v_k^n\|_X\leq 2^{-n-1}$ for all $k\ge m_n$. Then, we have
    $v_{m_n}^n\in V_{m_n}$ for all $n\in \mathbb{N}$ and
    $\|\eta-jv_{m_n}^n\|_Y\leq (1+c)2^{-n}$ for all $n\in
    \mathbb{N}$, where $c>0$ is the embedding constant of $j$.
\hfill$\qed$
	\end{proof}

	\section{Quasi non-conforming Bochner pseudo-monotonicity}
	\label{sec:4}
	
	In this section we introduce an extended notion of Bochner
        pseudo-monotonicity (cf.~\cite{alex-rose-hirano}, \cite{K19}), which incorporates a given quasi non-conforming approximation $(V_n)_{n\in \mathbb{N}}$.
	\begin{defn}\label{3.4}
		Let $(X,Y,j)$ and $(V,H,j) $ be as in Definition
                \ref{3.1} and let $(V_n)_{n\in\mathbb{N}}$ be a quasi
                non-conforming approximation of $V$ in $X$, $I:=\left(0,T\right)$, with $0<T<\infty$, and $1<p\leq q<\infty$. An operator $\mathbfcal{A}:L^p(I,X)\cap_{\boldsymbol{j}} L^q(I,Y)\to (L^p(I,X)\cap_{\boldsymbol{j}} L^q(I,Y))^*$ is said to be \textbf{quasi non-conforming Bochner pseudo-monotone with respect to $(V_n)_{n\in\mathbb{N}}$} if for a sequence $\boldsymbol{u}_n\in L^\infty(I,V_{m_n})$, $n\in\mathbb{N}$, where $(m_n)_{n\in\mathbb{N}}\subseteq\mathbb{N}$ with $m_n\to \infty$ $(n\to\infty)$, from
		\begin{alignat}{2}
		\boldsymbol{u}_n&\;\;\rightharpoonup\;\;
		\boldsymbol{u}&&\quad\text{ in }L^p(I,X)\;\quad (n\rightarrow\infty)\label{eq:3.5},
		\\
		\boldsymbol{j}\boldsymbol{u}_n&\;\;\overset{\ast}{\rightharpoondown}\;\;
		\boldsymbol{j}\boldsymbol{u}&&\quad\text{ in } L^\infty(I,Y)\quad (n\rightarrow\infty),
		\label{eq:3.6}
		\\
		P_H(\boldsymbol{j}\boldsymbol{u}_n)(t)&\;\;\rightharpoonup\;\;
		(\boldsymbol{j}\boldsymbol{u})(t) &&\quad\text{ in }H\quad (n\rightarrow\infty)\quad\text{for a.e. }t\in I,\label{eq:3.7}
		\end{alignat}
		and
		\begin{align}
		\limsup_{n\rightarrow\infty}{\langle \mathbfcal{A}\boldsymbol{u}_n,\boldsymbol{u}_n-\boldsymbol{u}\rangle_{L^p(I,X)\cap_{\boldsymbol{j}} L^q(I,Y)}}\leq 0,\label{eq:3.8}
		\end{align}
		it follows for all $\boldsymbol{v}\in
                L^p(I,X)\cap_{\boldsymbol{j}} L^q(I,Y)$ that 
		\begin{align*}
			\langle \mathbfcal{A}\boldsymbol{u},\boldsymbol{u}-\boldsymbol{v}\rangle_{L^p(I,X)\cap_{\boldsymbol{j}} L^q(I,Y)}\leq	\liminf_{n\rightarrow\infty}{\langle \mathbfcal{A}\boldsymbol{u}_n,\boldsymbol{u}_n-\boldsymbol{v}\rangle_{L^p(I,X)\cap_{\boldsymbol{j}} L^q(I,Y)}}.
		\end{align*}
	\end{defn}
        Note that \eqref{eq:3.5} and \eqref{eq:3.6} guarantee that $\boldsymbol{u}\in L^p(I,V)\cap_{\boldsymbol{j}} L^\infty(I,H)$ due to Definition \ref{3.1}.
\medskip 
        
	The basic idea of quasi non-conforming Bochner pseudo-monotonicity, in comparison
        to the original notion of Bochner pseudo-monotonicity tracing back to
        \cite{alex-rose-hirano}, consists in incorporating the finite dimensional
        approximation $(V_n)_{n\in \mathbb{N}}$ into the definition. 
	We will see in the proof of Theorem~\ref{5.17} that
	\eqref{eq:3.5}--\eqref{eq:3.8} are natural properties of a sequence
	$\boldsymbol{u}_n\in
L^p(I,V_{m_n})$, $n\in \mathbb{N}$, coming from \eqref{eq:fully}
(which is a quasi non-conforming
Rothe--Galerkin approximation of \eqref{eq:1.1}), if $\mathbfcal{A}$
	satisfies appropriate additional assumptions. In fact,
	\eqref{eq:3.5} usually is a consequence of the coercivity of
	$\mathbfcal{A}$, \eqref{eq:3.6} stems from the time derivative, while
	\eqref{eq:3.7} and \eqref{eq:3.8} follow directly from the 
	approximative scheme.

	\begin{prop}\label{3.9}
		Let $(X,Y,j)$ and $(V,H,j) $ be as in Definition \ref{3.1} and let $(V_n)_{n\in\mathbb{N}}$ be a quasi
                non-conforming approximation of $V$ in $X$, $I:=\left(0,T\right)$,  $T<\infty$, and $1<p\leq q<\infty$. 
		Moreover, let ${A(t):X\to X^*}$, $t\in I$, be a family of operators  with the following properties:
		\begin{description}[{(A.3)}]
			\item[\textbf{(A.1)}] \hypertarget{A.1} $A(t):X\to X^*$ is pseudo-monotone for almost every $t\in I$.
			\item[\textbf{(A.2)}] \hypertarget{A.2} $A(\cdot)u:I\to X^*$ is Bochner measurable for every $u\in X$.
			\item[\textbf{(A.3)}] \hypertarget{A.3} For some constants $c_0>0$ and 
$c_1,c_2\ge 0$ holds for almost every $t\in I$ and every $u\in X$
			\begin{align*}
			\langle A(t)u,u\rangle_X\ge c_0\|u\|_X^p-c_1\|ju\|_Y^2-c_2.
			\end{align*}
			\item[\textbf{(A.4)}] \hypertarget{A.4} For constants  $\gamma\ge 0$ and $\lambda\in \left(0,c_0\right)$ holds	for almost every $t\in I$ and every $u,v\in X$
			\begin{align*}
			\vert\langle A(t)u,v\rangle_X\vert \leq \lambda\|u\|_X^p+\gamma\big(1+\|ju\|_Y^q+\|jv\|_Y^q+\|v\|_X^p\big).
			\end{align*}
		\end{description}
	Then, the induced operator $\mathbfcal{A}:L^p(I,X)\cap_{\boldsymbol{j}} L^q(I,Y)\to (L^p(I,X)\cap_{\boldsymbol{j}} L^q(I,Y))^*$,~given~via~\eqref{eq:induced},
	is well-defined, bounded and quasi non-conforming  Bochner
        pseudo-monotone~with~respect~to~the subspaces ${(V_n)_{n\in \mathbb{N}}}$.
	\end{prop}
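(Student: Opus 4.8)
The plan is to establish the three claimed properties of $\mathbfcal{A}$ in turn. First I would show $\mathbfcal{A}$ is \textbf{well-defined and bounded}: given $\boldsymbol{x}\in L^p(I,X)\cap_{\boldsymbol{j}} L^q(I,Y)$, assumption (\hyperlink{A.2}{A.2}) together with (\hyperlink{A.1}{A.1}) (using Lemma~\ref{2.9a} to get demi-continuity, hence Bochner measurability of $t\mapsto A(t)(\boldsymbol{x}(t))$ by approximating $\boldsymbol{x}$ by simple functions) gives measurability; then (\hyperlink{A.4}{A.4}), applied with $y$ ranging over $X$ and integrated over $I$, produces the estimate $\|A(t)(\boldsymbol{x}(t))\|_{X^*}\le \lambda\|\boldsymbol{x}(t)\|_X^{p-1}+\ldots$ in an appropriate dual form, showing $\mathbfcal{A}\boldsymbol{x}\in (L^p(I,X)\cap_{\boldsymbol{j}} L^q(I,Y))^*$ with norm controlled by a power of $\|\boldsymbol{x}\|$, whence boundedness. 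This part is routine bookkeeping once one uses the representation \eqref{eq:dual} of the dual space.

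The heart of the matter is \textbf{quasi non-conforming Bochner pseudo-monotonicity}. I would start from a sequence $\boldsymbol{x}_n\in L^\infty(I,V_{m_n})$ satisfying \eqref{eq:3.5}--\eqref{eq:3.8}. The strategy mirrors the classical Brezis-type argument but relativized to the spaces $V_n$. Fix $\boldsymbol{y}\in L^p(I,X)\cap_{\boldsymbol{j}} L^q(I,Y)$. The goal is the $\liminf$ inequality; by a standard contradiction/subsequence argument it suffices to pass to a subsequence (not relabelled) along which $\langle\mathbfcal{A}\boldsymbol{x}_n,\boldsymbol{x}_n-\boldsymbol{y}\rangle$ converges to its $\liminf$. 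The key pointwise-in-time step: using \eqref{eq:3.8} together with $\boldsymbol{x}_n\rightharpoonup\boldsymbol{x}$ and a Fatou-type argument applied to the functions $g_n(t):=\langle A(t)\boldsymbol{x}_n(t),\boldsymbol{x}_n(t)-\boldsymbol{x}(t)\rangle_X$ (which are bounded below after adding a fixed integrable function, by (\hyperlink{A.3}{A.3}) and (\hyperlink{A.4}{A.4})), one extracts a further subsequence and a null set such that for a.e.\ $t\in I$ one has $\limsup_n g_n(t)\le 0$. For such $t$: $\boldsymbol{x}_n(t)$ is bounded in $X$ (from \eqref{eq:3.6}, after a diagonal extraction ensuring a.e.\ boundedness), so along a subsequence it converges weakly in $X$; its limit lies in $V$ by Proposition~\ref{3.2}(i), and by Proposition~\ref{3.2}(ii) combined with \eqref{eq:3.7} the weak $X$-limit must be $\boldsymbol{x}(t)$ itself. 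Now the pointwise pseudo-monotonicity (\hyperlink{A.1}{A.1}) of $A(t):X\to X^*$ applies with $\boldsymbol{x}_n(t)\rightharpoonup\boldsymbol{x}(t)$ and $\limsup_n\langle A(t)\boldsymbol{x}_n(t),\boldsymbol{x}_n(t)-\boldsymbol{x}(t)\rangle_X\le0$, yielding $\langle A(t)\boldsymbol{x}(t),\boldsymbol{x}(t)-w\rangle_X\le\liminf_n\langle A(t)\boldsymbol{x}_n(t),\boldsymbol{x}_n(t)-w\rangle_X$ for every $w\in X$, in particular for $w=\boldsymbol{y}(t)$.

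Finally I would \textbf{integrate back up}. From the pointwise inequality and a further Fatou argument — lower-bounding $h_n(t):=\langle A(t)\boldsymbol{x}_n(t),\boldsymbol{x}_n(t)-\boldsymbol{y}(t)\rangle_X$ by a fixed $L^1$ function via (\hyperlink{A.3}{A.3}) and (\hyperlink{A.4}{A.4}) (this is where the splitting $\langle A(t)x,x-y\rangle = \langle A(t)x,x\rangle - \langle A(t)x,y\rangle$ and the coercivity with the gap $\lambda<c_0$ is essential to absorb the bad terms) — one obtains
\begin{align*}
  \langle\mathbfcal{A}\boldsymbol{x},\boldsymbol{x}-\boldsymbol{y}\rangle
  = \int_I \langle A(t)\boldsymbol{x}(t),\boldsymbol{x}(t)-\boldsymbol{y}(t)\rangle_X\,dt
  \le \int_I \liminf_n h_n(t)\,dt
  \le \liminf_n \int_I h_n(t)\,dt,
\end{align*}
which is exactly the desired conclusion. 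The main obstacle — and the place where the quasi non-conforming structure does real work — is identifying the pointwise weak limit of $\boldsymbol{x}_n(t)$ in $X$ as $\boldsymbol{x}(t)$: one only has the projected convergence \eqref{eq:3.7} in $H$ (not weak convergence in $Y$), so the identification genuinely relies on Proposition~\ref{3.2}(i)--(ii), i.e.\ on (\hyperlink{QNC.2}{QNC.2}) to force the limit into $V$ and on the injectivity of $j$ to pin it down. A secondary technical point is orchestrating the countable stack of subsequence extractions (time-a.e.\ boundedness, the Fatou null set, the pointwise weak $X$-limit) so that a single subsequence works; this is handled by the usual diagonal procedure plus the convergence principle \cite[Kap.~I, Lemma~5.4]{GGZ} to upgrade back from subsequences to the full sequence.
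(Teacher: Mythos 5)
Your overall architecture (well-definedness via simple functions and demi-continuity, then a pointwise-in-time pseudo-monotonicity argument assembled by Fatou, with Proposition~\ref{3.2} identifying the weak $X$-limit of $\boldsymbol{x}_n(t)$) is the paper's architecture. But the central step as you state it has a genuine gap. You claim that from \eqref{eq:3.8} and ``a Fatou-type argument'' applied to $g_n(t):=\langle A(t)(\boldsymbol{x}_n(t)),\boldsymbol{x}_n(t)-\boldsymbol{x}(t)\rangle_X$ one can extract a subsequence along which $\limsup_n g_n(t)\le 0$ for a.e.\ $t$. This does not follow: knowing only $\limsup_n\int_I g_n\,dt\le 0$ and a fixed integrable lower bound $g_n\ge-\mu$ is compatible with $g_n$ being a fixed function that is strictly positive on a set of positive measure and negative elsewhere with cancelling integral. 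The missing ingredient is the paper's ``intermediate objective'': one must \emph{first} prove $\liminf_{n}g_n(t)\ge 0$ for a.e.\ $t$. That step is itself an application of the pointwise pseudo-monotonicity of $A(t)$, restricted to the index set $\Lambda_t:=\{n:g_n(t)<0\}$; on $\Lambda_t$ the coercivity bound $(c_0-\lambda)\|\boldsymbol{x}_n(t)\|_X^p\le g_n(t)+\mu(t)<\mu(t)$ forces $(\boldsymbol{x}_n(t))_{n\in\Lambda_t}$ to be bounded in $X$, Proposition~\ref{3.2}(ii) with \eqref{eq:3.7} identifies the weak limit as $\boldsymbol{x}(t)$, and pseudo-monotonicity (tested against $\boldsymbol{x}(t)$ itself) yields $\liminf_{\Lambda_t}g_n(t)\ge 0$. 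Only with this pointwise $\liminf\ge 0$ in hand does Fatou combined with \eqref{eq:3.8} give $\int_I g_n\,dt\to 0$, and then the negative-part/Vitali argument ($g_n^-\to 0$ pointwise and dominated, $|g_n|=g_n-2g_n^-$) gives $g_n\to 0$ in $L^1(I)$ and hence a.e.\ along a further subsequence — which is the statement you tried to reach in one jump.

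A second, related error: you assert that $\boldsymbol{x}_n(t)$ is bounded in $X$ ``from \eqref{eq:3.6}, after a diagonal extraction ensuring a.e.\ boundedness.'' Condition \eqref{eq:3.6} only bounds $\boldsymbol{j}\boldsymbol{x}_n$ in $L^\infty(I,Y)$, i.e.\ it controls $\|j(\boldsymbol{x}_n(t))\|_Y$, not $\|\boldsymbol{x}_n(t)\|_X$; and \eqref{eq:3.5} gives only an $L^p(I,X)$ bound, which no extraction upgrades to a pointwise-in-$t$ bound in $X$. The pointwise $X$-boundedness genuinely comes from the coercivity estimate built from (\hyperlink{A.3}{A.3}) and (\hyperlink{A.4}{A.4}) with the gap $\lambda<c_0$, combined with the control on $g_n(t)$ (either $g_n(t)<0$ on $\Lambda_t$ in the first stage, or $g_n(t)\to 0$ in the second). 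You do invoke exactly this coercivity splitting later, when integrating back up, so the tool is in your proposal — but it is needed earlier and for a different purpose than you use it. The remainder of your argument (Proposition~\ref{3.2}(i)--(ii) to pin down the limit in $V$, the final Fatou step with lower bound $-\mu_{\boldsymbol{y}}$, and the convergence principle to return to the full sequence) is correct and matches the paper.
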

	
	\begin{proof}
		
          \textbf{1. Well-definiteness:} For
          $\boldsymbol{u}_1,\boldsymbol{u}_2\in
          L^p(I,X)\cap_{\boldsymbol{j}} L^q(I,Y)$ there exists
          sequences of simple functions
          $(\boldsymbol{s}_n^m)_{n\in \mathbb{N}}\subseteq
          L^\infty(I,X)$, $m=1,2$, i.e., 
          $\boldsymbol{s}_n^m(t)=\sum_{i=1}^{k_n^m}{s_{n,i}^m\chi_{E_{n,i}^m}(t)}$
          for $t\in I$ and $m=1,2$, where $s_{n,i}^m\in X$,
          $k_n^m\in \mathbb{N}$ and $E_{n,i}^m\in \mathcal{L}^1(I) $
          with $\bigcup_{i=1}^{k_n^m}{E_{n,i}^m}=I$ and $E_{n,i}^m\cap E_{n,j}^m=\emptyset$ for $i\neq j$, such
          that $ \boldsymbol{s}_n^m(t)\to \boldsymbol{u}_m(t)$ in $X$
          for almost every $t\in I$ and $m=1,2$. Moreover, it follows
          from Lemma~\ref{2.9a} that $A(t):X\to X^*$ is for almost
          every $t\in I$ demi-continuous, since it is for almost every
          $t\in I$ pseudo-monotone (cf.~(\hyperlink{A.1}{A.1})) and
          bounded (cf.~(\hyperlink{A.4}{A.4})). This yields for
          almost~every~${t\in I}$
	\begin{align}
		\langle A(t)(\boldsymbol{s}_n^1(t)),\boldsymbol{s}_n^2(t)\rangle_{X}=\sum_{i=1}^{k_n^1}{\sum_{j=1}^{k_n^2}{\langle A(t)s_{n,i}^1,s_{n,j}^2\rangle_{X}\chi_{E_{n,i}^1\cap E_{n,j}^2}\!(t)}}\overset{n\to\infty}{\to }\langle A(t)(\boldsymbol{u}_1(t)),\boldsymbol{u}_2(t)\rangle_{X}.\label{eq:3.92}
	\end{align}
	Thus, since the functions $(t\mapsto\langle A(t)s_{n,i}^1,s_{n,j}^2\rangle_{X}:I\to \mathbb{R}$, $i=0,\dots,k_n^1$, $j=1,\dots,k_n^2$, $n\in \mathbb{N}$, are Lebesgue measurable due to (\hyperlink{A.2}{A.2}), we conclude from \eqref{eq:3.92}~that~${(t\mapsto\langle A(t)(\boldsymbol{u}_1(t)),\boldsymbol{u}_2(t)\rangle_{X})\!:\!I\to \mathbb{R}}$ is Lebesgue measurable. In addition, using (\hyperlink{A.4}{A.4}), we obtain
	\begin{align}
		\begin{split}
		\int_I{\langle A(t)(\boldsymbol{u}_1(t)),\boldsymbol{u}_2(t)\rangle_{X}\,dt}&\leq \lambda \|\boldsymbol{u}_1\|_{L^p(I,X)}^p\\&\quad+\gamma [T+\|\boldsymbol{j}\boldsymbol{u}_1\|_{L^q(I,Y)}^q+\|\boldsymbol{j}\boldsymbol{u}_2\|_{L^q(I,Y)}^q+\|\boldsymbol{u}_2\|_{L^p(I,X)}^p],
		\end{split}\label{eq:3.91}
	\end{align}
	i.e., $\mathbfcal{A}:L^p(I,X)\cap_{\boldsymbol{j}} L^q(I,Y)\to (L^p(I,X)\cap_{\boldsymbol{j}} L^q(I,Y))^*$ is well-defined. \\[-3mm]
	
	\textbf{2. Boundedness:}
	As $\|\boldsymbol{v}\|_{L^p(I,X)\cap_{\boldsymbol{j}} L^q(I,Y)}\leq 1$ implies that $\|\boldsymbol{v}\|_{L^p(I,X)}^p+\|\boldsymbol{j}\boldsymbol{v}\|_{L^q(I,Y)}^q\leq 2$ for every $\boldsymbol{v}\in L^p(I,X)\cap_{\boldsymbol{j}} L^q(I,Y)$, we infer from \eqref{eq:3.91} for every $\boldsymbol{u}\in L^p(I,X)\cap_{\boldsymbol{j}} L^\infty(I,Y)$  that
	\begin{align*}
	\begin{split}
		\|\mathbfcal{A}\boldsymbol{u}\|_{(L^p(I,X)\cap_{\boldsymbol{j}} L^q(I,Y))^*}&=\sup_{\|\boldsymbol{v}\|_{L^p(I,X)\cap_{\boldsymbol{j}} L^q(I,Y)}\leq 1}{\langle \mathbfcal{A}\boldsymbol{u},\boldsymbol{v}\rangle_{L^p(I,X)\cap_{\boldsymbol{j}} L^q(I,Y)}}
		\\&\leq \lambda \|\boldsymbol{u}\|_{L^p(I,X)}^p+\gamma\|\boldsymbol{j}\boldsymbol{u}\|_{L^q(I,Y)}^q+\gamma [T+2],
		\end{split}
	\end{align*}
	 i.e., $\mathbfcal{A}:L^p(I,X)\cap_{\boldsymbol{j}}
         L^q(I,Y)\to (L^p(I,X)\cap_{\boldsymbol{j}} L^q(I,Y))^*$ is
         bounded. \\[-3mm]
         
	 
	 \textbf{3. Quasi non-conforming Bochner pseudo-monotonicity with respect to $(V_n)_{n\in\mathbb{N}}$:}
	 In principle, we proceed analogously to \cite[Proposition~3.13]{alex-rose-hirano}. However, as we have solely almost everywhere weak convergence 
of the orthogonal projections available, i.e., \eqref{eq:3.7}, in the definition of quasi-nonconforming Bochner pseudo-monotonicity (cf.~Definition~\ref{3.4}), the arguments in \cite{alex-rose-hirano} ask for some slight modifications. In fact, in this context the properties of the quasi non-conforming approximation $(V_n)_{n\in \mathbb{N}}$ come into play. Especially the role of Proposition \ref{3.2} will be crucial. We split the proof of the quasi non-conforming Bochner pseudo-monotonicity into four steps: \\[-3mm]
	 
	\textbf{3.1. Collecting information:} Let $\boldsymbol{u}_n\in L^\infty(I,V_{m_n})$, $n\in\mathbb{N}$, where $(m_n)_{n\in\mathbb{N}}\!\subseteq\!\mathbb{N}$~with~${m_n\!\to\!\infty}$ $(n\to\infty)$, be a sequence satisfying \eqref{eq:3.5}--\eqref{eq:3.8}.
		We fix an arbitrary $\boldsymbol{v}\in L^p(I,X)\cap_{\boldsymbol{j}} L^q(I,Y)$, and choose a subsequence 
		$(\boldsymbol{u}_n)_{n\in\Lambda}$, with
		$\Lambda\subseteq\mathbb{N}$, such that
		\begin{align}
		\lim_{\substack{n\rightarrow\infty\\n\in\Lambda}}{
			\langle \mathbfcal{A}\boldsymbol{u}_n,\boldsymbol{u}_n-\boldsymbol{v}\rangle_{L^p(I,X)\cap_{\boldsymbol{j}} L^q(I,Y)}}=
		\liminf_{n\rightarrow\infty}{\langle\mathbfcal{A}
			\boldsymbol{u}_n,\boldsymbol{u}_n-\boldsymbol{v}
			\rangle_{L^p(I,X)\cap_{\boldsymbol{j}} L^q(I,Y)}}.\label{eq:3.10}
		\end{align}
		Due to \eqref{eq:3.7} there exists a subset
		$E\subseteq I$, with $I\setminus E$ a null set\footnote{A null set is a
                  set of zero Lebesgue measure.}, such that for all $t\in 
E$
		\begin{align}
		P_H(\boldsymbol{j}\boldsymbol{u}_n)(t)\;\;\rightharpoonup\;\;(\boldsymbol{j}\boldsymbol{u})(t)\quad\text{ in }H\quad(n\to\infty).\label{eq:3.11}
		\end{align}
		Using (\hyperlink{A.3}{A.3}) and
		(\hyperlink{A.4}{A.4}), we get for every $\boldsymbol{z}\in L^p(I,X)\cap_{\boldsymbol{j}} L^q(I,Y)$~and~almost~every~${t\in I}$
		\begin{align}
		\begin{split}
		\langle A&(t)(\boldsymbol{u}_n(t)),\boldsymbol{u}_n(t)
		-\boldsymbol{z}(t)\rangle_X
		\\
		&\ge c_0\|\boldsymbol{u}_n(t)\|_X^p-
		c_1\|j(\boldsymbol{u}_n(t))\|_Y^2-c_2
		-\langle
		A(t)(\boldsymbol{u}_n(t)),\boldsymbol{z}(t)\rangle_X
		\\
		&\ge 	(c_0-\lambda)\|\boldsymbol{u}_n(t)\|_X^p
		-c_1K^2-c_2-\gamma\big[1+K^q+\|(\boldsymbol{j}\boldsymbol{z})(t)\|_Y^q+\|\boldsymbol{z}(t)\|_X^p\big],
		\end{split}\label{eq:3.12}
		\end{align}
		where $K:=\sup_{n\in\mathbb{N}}{\|\boldsymbol{j}\boldsymbol{u}_n\|_{L^\infty(I,Y)}}<\infty$ (cf.~\eqref{eq:3.6}). 
		If we set $\mu_{\boldsymbol{z}}(t):=-c_1K^2-c_2-\gamma\big[1+K^q+\|(\boldsymbol{j}\boldsymbol{z})(t)\|_Y^q+\|\boldsymbol{z}(t)\|_X^p\big]\in L^1(I)$, then \eqref{eq:3.12} reads 
		\begin{align}
		\langle
		A(t)(\boldsymbol{u}_n(t)),\boldsymbol{u}_n(t)-
		\boldsymbol{z}(t)\rangle_X\ge
		(c_0-\lambda)\|\boldsymbol{u}_n(t)\|_X^p
		-\mu_{\boldsymbol{z}}(t), \tag*{$(\ast)_{\boldsymbol{z},n,t}$}
		\end{align}
		for almost every $t\in I$ and all $n\in \Lambda$. Next, we define
		\begin{align*}
		{E_1}:=
		\big \{t\in E \fdg A(t):X\rightarrow X^*\text{ is
			pseudo-monotone},
		\vert\mu_{\boldsymbol{u}}(t)\vert<\infty\text{
			and }(\ast)_{\boldsymbol{u},n,t}\text{ holds for all }n\in\Lambda\big \}. 
		\end{align*}
		From the defining properties of $E_1$ it follows
                directly that $I\setminus{E_1}$ is a null set. \\[-3mm]
		
		\textbf{3.2. Intermediate objective:} Our next
                objective is to verify that for all $t\in E_1$
                there holds
		\begin{align}
		\liminf_{\substack{n\rightarrow\infty\\n\in\Lambda}}
		{\langle A(t)(\boldsymbol{u}_n(t)),\boldsymbol{u}_n(t)
			-\boldsymbol{u}(t)\rangle_X}\ge 0. \tag*{$(\ast\ast )_{t}$}
		\end{align}
		To this end, let us fix an arbitrary
		$t\in{E_1}$ and define
		\begin{align*}
		\Lambda_t:=\{n\in\Lambda\fdg
		\langle A(t)(\boldsymbol{u}_n(t)),\boldsymbol{u}_n(t)
		-\boldsymbol{u}(t)\rangle_X< 0\}.
		\end{align*}
		We assume without loss of generality that $\Lambda_t$
		is not finite. Otherwise, $(\ast\ast )_{t}$ would already hold true
		for this specific $t\in E_1$ and nothing would be left to
		do. But if $\Lambda_t$ is not finite, then 
		\begin{align}
		\limsup_{\substack{n\rightarrow\infty\\n\in\Lambda_t}}
		{\langle A(t)(\boldsymbol{u}_n(t)),\boldsymbol{u}_n(t)-
			\boldsymbol{u}(t)\rangle_X}\leq 0.\label{eq:3.13}
		\end{align}
		The definition of $\Lambda_t$ and $(\ast)_{\boldsymbol{u},n,t}$ imply for all $n\in\Lambda_t$
		\begin{align}
		(c_0-\lambda)\|\boldsymbol{u}_n(t)\|_X^p\leq\langle A(t)
		(\boldsymbol{u}_n(t)),\boldsymbol{u}_n(t)-
		\boldsymbol{u}(t)\rangle_X+
		\vert\mu_{\boldsymbol{u}}(t)\vert<\vert\mu_{\boldsymbol{u}}(t)\vert
		<\infty. \label{eq:3.14}
		\end{align}
		This 
                and $\lambda <c_0$ 
                yield that the sequence
                $(\boldsymbol{u}_n(t))_{n\in\Lambda_t}$ is bounded in
                $X$. In view of 
                \eqref{eq:3.11}, Proposition \ref{3.2} (ii) implies that
		\begin{align}\label{eq:psmon}
			\boldsymbol{u}_n(t)\;\;\rightharpoonup\;\;\boldsymbol{u}(t)\quad\text{
                  in }X\quad( \Lambda_t\ni n\to \infty). 
		\end{align}
		Since $A(t):X\rightarrow X^*$ is pseudo-monotone, we
                get from \eqref{eq:psmon} and \eqref{eq:3.13} that  
		\begin{align*}
		\liminf_{\substack{n\rightarrow\infty\\n\in\Lambda_t}}
		{\langle A(t)(\boldsymbol{u}_n(t)),\boldsymbol{u}_n(t)
			-\boldsymbol{u}(t)\rangle_X}\ge 0.
		\end{align*}
		Due to
		$\langle A(t)(\boldsymbol{u}_n(t)), \boldsymbol{u}_n(t)
		-\boldsymbol{u}(t)\rangle_X\ge 0$ for all
		$n\in\Lambda\setminus\Lambda_t$, 
		$(\ast\ast)_t$
		holds for all $t\in E_1$. \\[-3mm]

		\textbf{3.3. Switching to the image space level:} In this passage we
		verify the existence of a subsequence
		$(\boldsymbol{u}_n)_{n\in\Lambda_0}\subseteq L^p(I,X)\cap_{\boldsymbol{j}} L^\infty(I,Y)$,
		with $\Lambda_0\subseteq\Lambda$, such that for almost every $t\in I$
		\begin{align}
		\begin{split}
		\boldsymbol{u}_n(t)\;\;\rightharpoonup\;\;
		\boldsymbol{u}(t)\quad\text{ in }X\quad(\Lambda_0\ni n\to \infty),\\
		\limsup_{\substack{n\rightarrow\infty\\n\in\Lambda_0}}
		{\langle A(t)(\boldsymbol{u}_n(t)),\boldsymbol{u}_n(t)
			-\boldsymbol{u}(t)\rangle_X}\leq 0.
		\end{split}
		\label{eq:3.15}
		\end{align}
		As a consequence, we are in a position to
		exploit the almost everywhere pseudo-monotonicity of the operator
		family.  Thanks to
		$\langle
		A(t)(\boldsymbol{u}_n(t)),\boldsymbol{u}_n(t)-\boldsymbol{u}(t)\rangle_X\ge
                -\mu_{\boldsymbol{u}}(t)$ for all $t\in E_1$ and
		$n\in\Lambda$ (cf. $(\ast)_{\boldsymbol{u},n,t}$), Fatou's lemma (cf. \cite[Theorem 1.18]{Rou05}) is
		applicable. It yields, also using \eqref{eq:3.8} 
		\begin{align}
                  \begin{aligned}
                    0&\leq
                    \int_I{\liminf_{\substack{n\rightarrow\infty\\n\in\Lambda}}
                      {\langle
                        A(s)(\boldsymbol{u}_n(s)),\boldsymbol{u}_n(s)-
                        \boldsymbol{u}(s)\rangle_X}\,ds}
                    \\
                    &\leq
                    \liminf_{\substack{n\rightarrow\infty\\n\in\Lambda}}{\int_I{\langle
                        A(s)(\boldsymbol{u}_n(s)),\boldsymbol{u}_n(s)-\boldsymbol{u}(s)\rangle_X\,ds}}\label{eq:3.16}
                    \\
                    &\leq \limsup_{n\rightarrow\infty}
                    {\langle\mathbfcal{A}\boldsymbol{u}_n,\boldsymbol{u}_n
                      -\boldsymbol{u}\rangle_{L^p(I,X)\cap_{\boldsymbol{j}}
                        L^q(I,Y)}}
                    \\
                    &\leq 0.
                  \end{aligned}
		\end{align}
		Let us define $g_n(t):=\langle
		A(t)(\boldsymbol{u}_n(t)),\boldsymbol{u}_n(t)-\boldsymbol{u}(t)\rangle_{X}$. Then, $(\ast\ast)_t$ and 
		\eqref{eq:3.16} read:
		\begin{align}
                  \liminf_{\substack{n\rightarrow\infty\\n\in\Lambda}}{g_n(t)}
                  &\ge 0\quad\text{ for all }t\in E_1.\label{eq:3.17}
                  \\
		\lim_{\substack{n\rightarrow\infty\\n\in\Lambda}}{\int_I{g_n(s)\,ds}}&=0.\label{eq:3.18}
		\end{align}
		As $s\mapsto s^-:=\min\{0,s\}$ is continuous and non-decreasing we
		deduce from \eqref{eq:3.17} that
		\begin{align*}
		0\ge\limsup_{\substack{n\rightarrow\infty\\n\in\Lambda}}
		{g_n(t)^-}\ge\liminf_{\substack{n\rightarrow\infty\\n\in\Lambda}}
		{g_n(t)^-}\ge \min\Big\{0,
		\liminf_{\substack{n\rightarrow\infty\\n\in\Lambda}}{g_n(t)}\Big\}=0,
		\end{align*}
		i.e.,~$g_n(t)^-\to 0$ $(\Lambda\ni n\to\infty)$ for all $t\in E_1$. Since 
		$0\ge g_n(t)^-\ge -\mu_{\boldsymbol{u}}(t)$ for all
                $t\in E_1$ and
		$n\in\Lambda$, Vitali's theorem yields
		$g_n^-\to 0$ in $L^1(I)$ $(\Lambda\ni n\to\infty)$.  From
		the latter, $\vert g_n\vert=g_n-2g_n^-$ and \eqref{eq:3.18}, we
		conclude that $g_n\to0$ in
		$L^1(I)$ $(\Lambda\ni n\to\infty)$.  This provides a further subsequence
		$(\boldsymbol{u}_n)_{n\in\Lambda_0}$
		with $\Lambda_0\subseteq\Lambda$ and a subset $F\subseteq I$ such
		that $ I\setminus F$ is a null set and for all $t\in F$
		\begin{align}
		\lim_{\substack{n\rightarrow\infty\\n\in\Lambda_0}}{\langle
                  A(t)(\boldsymbol{u}_n(t)),\boldsymbol{u}_n(t)-\boldsymbol{u}(t)\rangle_X}=
                  0. \label{eq:3.19}
		\end{align}
	   This and \eqref{eq:3.14} implies for all $t\in
           {E_1}\cap F$ that 
		\begin{align*}
		\limsup_{\substack{n\rightarrow\infty\\n\in\Lambda_0}}
		{(c_0-\lambda)\|\boldsymbol{u}_n(t)\|_X^p}\leq
		\limsup_{\substack{n\rightarrow\infty\\n\in\Lambda_0}}
		{\langle A(t)(\boldsymbol{u}_n(t)),\boldsymbol{u}_n(t)
			-\boldsymbol{u}(t)\rangle_X+
			\vert\mu_{\boldsymbol{u}}(t)\vert}
		=\vert\mu_{\boldsymbol{u}}(t)\vert<\infty,
		\end{align*}
		i.e., $(\boldsymbol{u}_n(t))_{n\in \Lambda_0}$ is
                bounded in $X$ for all $t\in
                {E_1}\cap F$. Thus, \eqref{eq:3.11}
                and Proposition \ref{3.2} (ii) yield for all $t\in {E_1}\cap F$
		\begin{align}
		\boldsymbol{u}_n(t)\;\;\rightharpoonup\;\;\boldsymbol{u}(t)\quad\text{ in }
		X\quad(\Lambda_0\ni n \to \infty).\label{eq:3.20}
		\end{align} 
		The relations 
		\eqref{eq:3.19} and \eqref{eq:3.20} are just \eqref{eq:3.15}. \\[-3mm]

		\textbf{3.4. Switching to the Bochner-Lebesgue level:} From the
		pseudo-monotonicity of the operators $A(t):X\rightarrow X^*$ for
		all $t\in {E_1}\cap F$ we obtain almost every $t\in I$
		\begin{align*}
		\langle A(t)(\boldsymbol{u}(t)),\boldsymbol{u}(t)-
		\boldsymbol{v}(t)\rangle_X\leq
		\liminf_{\substack{n\rightarrow\infty\\n\in\Lambda_0}}
		{\langle A(t)(\boldsymbol{u}_n(t)),\boldsymbol{u}_n(t)
			-\boldsymbol{v}(t)\rangle_X}.
		\end{align*}
		Due to $(\ast)_{\boldsymbol v,n,t}$, we have
                $\langle A(t)(\boldsymbol{u}_n(t)),\boldsymbol{u}_n(t)
                -\boldsymbol{v}(t)\rangle_X\ge-\mu_{\boldsymbol{v}}(t)$
                for almost every $t\in
                I$~and~all~${n\in\Lambda_0}$. Thus, using the
                definition of the induced operator \eqref{eq:induced},
                Fatou's lemma and \eqref{eq:3.10} we deduce 
		\begin{align*}
		\langle \mathbfcal{A}\boldsymbol{u},\boldsymbol{u}
		-\boldsymbol{v}\rangle_{L^p(I,X)\cap_{\boldsymbol{j}} L^q(I,Y)}
		&\leq
		\int_I{\liminf_{\substack{n\rightarrow\infty\\n\in\Lambda_0}}
			{\langle A(s)(\boldsymbol{u}_n(s)),\boldsymbol{u}_n(s)
				-\boldsymbol{v}(s)\rangle_X}\,ds}
		\\
		&\leq \liminf_{\substack{n\rightarrow\infty\\n\in\Lambda_0}}
		{\int_I{\langle A(s)(\boldsymbol{u}_n(s)),\boldsymbol{u}_n(s)
				-\boldsymbol{v}(s)\rangle_X\,ds}}
		\\
		&=\lim_{\substack{n\rightarrow\infty\\n\in\Lambda}}
		{\langle \mathbfcal{A}\boldsymbol{u}_n,\boldsymbol{u}_n
			-\boldsymbol{v}\rangle_{L^p(I,X)\cap_{\boldsymbol{j}} L^q(I,Y)}}
		\\
		&= \liminf_{n\rightarrow\infty}{\langle \mathbfcal{A}
			\boldsymbol{u}_n,\boldsymbol{u}_n-\boldsymbol{v}\rangle_{L^p(I,X)\cap_{\boldsymbol{j}} L^q(I,Y)}}.
		\end{align*}
		As $\boldsymbol{v}\in L^p(I,X)\cap_{\boldsymbol{j}} L^q(I,Y)$ was chosen arbitrary, this completes the proof
		of Proposition \ref{3.9}.~\hfill$\qed$
	\end{proof}

	\section{Rothe scheme}		\label{sec:5}
	
	
        Let $X$ be a Banach space, and $I:=\left(0,T\right)$,
        ${T<\infty}$, be a finite time interval. For $K\in\mathbb{N}$
        we set ${\tau:=\frac{T}{K}}$,
        $I_k^\tau:=\left((k-1)\tau,k\tau\right]$, $k=1,\dots,K$, and
        $\mathcal{I}_\tau :=\{I_k^\tau\}_{k=1,\dots,K}$. Moreover, we
        denote by
		\begin{align*}
			\mathbfcal{S}^0(\mathcal{I}_\tau,X):=\{\boldsymbol{u}:I\to X\fdg \boldsymbol{u}(s)=\boldsymbol{u}(t)\text{ in }X\text{ for all }t,s\in I_k^\tau,k=1,\dots,K\}\subset L^\infty(I,X)
		\end{align*}
		 the \textbf{space of piece-wise constant functions
                   with respect to $\mathcal{I}_\tau$}. For a given
                 finite sequence $(u^k)_{k=0,\dots,K}\subseteq X$ the
                 \textbf{backward difference quotient} operator is
                 defined via 
		\begin{align*}
		d_\tau u^k:=\frac{1}{\tau}(u^k-u^{k-1})\quad\text{ in }X,\quad k=1,\dots,K.
		\end{align*}
		 Furthermore, we denote for a given finite sequence $(u^k)_{k=0,\dots,K}\subseteq X $ by
		 $\overline{\boldsymbol{u}}^\tau\in \mathbfcal{S}^0(\mathcal{I}_\tau,X)$ the \textbf{piece-wise constant interpolant}, and by $\hat{\boldsymbol{u}}^\tau\in W^{1,\infty}(I,X)$ the \textbf{piece-wise affine interpolant},  for every $t\in I_k^\tau$ and $k=1,\dots,K$ given via
		 \begin{align}\label{eq:polant}
		 	\overline{\boldsymbol{u}}^\tau(t):=u^k,\qquad\hat{\boldsymbol{u}}^\tau(t):=\Big(\frac{t}{\tau}-(k-1)\Big)u^k+\Big(k-\frac{t}{\tau}\Big)u^{k-1}\quad\text{ in }X.
		 \end{align}
		In addition, if $(X,Y,j)$ is an evolution triple and $(u^k)_{k=0,\dots,K}\subseteq X$ a finite sequence, then it holds for $k,l=0,\dots,K$ the \textbf{discrete integration by parts formula} 
		\begin{align}
			\int_{k\tau}^{l\tau}{\Big\langle \frac{d_e\hat{\boldsymbol{u}}^\tau}{dt}(t),\overline{\boldsymbol{u}}^\tau(t)\Big\rangle_X\,dt}\ge \frac{1}{2}\|ju^l\|_Y^2-\frac{1}{2}\|ju^k\|_Y^2,\label{eq:4.2}
		\end{align}
		which is an immediate consequence of the identity
		$\langle d_\tau eu^k,u^k\rangle_X=\frac{1}{2}d_\tau\|ju^k\|_Y^2+\frac{\tau}{2}\|d_\tau ju^k\|_Y^2$ for every $k=1,\dots,K$.

	For the discretization of the right-hand side in
        \eqref{eq:1.1} we use the following construction. 
          Let $X$ be a Banach space, $I=\left(0,T\right)$, $T<\infty$, $K \in
        \mathbb N$, $\tau:= \frac{T}{K}>0$
          and $1<p<\infty$. The \textbf{Clem\'ent
            $0$-order quasi-interpolation operator}
          ${\mathscr{J}_\tau:L^p(I,X)\to
            \mathbfcal{S}^0(\mathcal{I}_\tau,X)}$ is defined for every
          $\boldsymbol{u}\in L^p(I,X)$ via
		\begin{align*}
		\mathscr{J}_\tau[\boldsymbol{u}]:=\sum_{k=1}^K{[\boldsymbol{u}]_k^\tau\chi_{I_k^\tau}}\quad\text{ in }\mathbfcal{S}^0(\mathcal{I}_\tau,X),\qquad[\boldsymbol{u}]_k^\tau:=\fint_{I_k^\tau}{\boldsymbol{u}(s)\,ds}\in 
X.
		\end{align*}
	\begin{prop}\label{4.4}
	For every $\boldsymbol{u}\in L^p(I,X)$  it holds:
		\begin{description}[{(iii)}]
			\item[(i)] $\mathscr{J}_\tau[\boldsymbol{u}]\to\boldsymbol{u}$ in $L^p(I,X)$ $(\tau\to 0)$, i.e., $\bigcup_{\tau>0}\mathbfcal{S}^0(\mathcal{I}_\tau,X)$ is dense in $L^p(I,X)$.
			\item[(ii)] $\sup_{\tau>0}{\|\mathscr{J}_\tau[\boldsymbol{u}]\|_{L^p(I,X)}}\leq \|\boldsymbol{u}\|_{L^p(I,X)}$.
		\end{description}
	\end{prop}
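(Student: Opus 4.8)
The plan is to establish (ii) directly from Jensen's inequality and then to deduce (i) from (ii) by a density argument; both statements are routine, and I do not anticipate a genuine obstacle — the only points deserving a little care are the use of a vector-valued Jensen inequality in (ii) and the reduction of (i) to a dense subclass.

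For (ii) I would fix $\boldsymbol{x}\in L^p(I,X)$ and $K\in\mathbb{N}$, $\tau=\frac TK$. On each $I_k^\tau$, $k=1,\dots,K$, the map $\xi\mapsto\|\xi\|_X^p$ is convex and continuous on $X$, so the Bochner version of Jensen's inequality gives $\|[\boldsymbol{x}]_k^\tau\|_X^p=\bigl\|\fint_{I_k^\tau}\boldsymbol{x}(s)\,ds\bigr\|_X^p\le\fint_{I_k^\tau}\|\boldsymbol{x}(s)\|_X^p\,ds$. Since $\mathscr{J}_\tau[\boldsymbol{x}]$ equals $[\boldsymbol{x}]_k^\tau$ on $I_k^\tau$ and $|I_k^\tau|=\tau$, multiplying by $\tau$ and summing over $k$ yields
\[
\|\mathscr{J}_\tau[\boldsymbol{x}]\|_{L^p(I,X)}^p=\sum_{k=1}^K\tau\,\|[\boldsymbol{x}]_k^\tau\|_X^p\le\sum_{k=1}^K\int_{I_k^\tau}\|\boldsymbol{x}(s)\|_X^p\,ds=\|\boldsymbol{x}\|_{L^p(I,X)}^p,
\]
which is (ii). Note that $\mathscr{J}_\tau$ is linear, so (ii) says precisely that $\mathscr{J}_\tau$ is a contraction on $L^p(I,X)$, uniformly in $\tau>0$.

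For (i) I would use that, since $1<p<\infty$, the space $C^0(\overline I,X)$ is dense in $L^p(I,X)$, and first treat a continuous $\boldsymbol{y}\in C^0(\overline I,X)$. By uniform continuity of $\boldsymbol{y}$ on the compact interval $\overline I$, given $\varepsilon>0$ there is $\delta>0$ with $\|\boldsymbol{y}(s)-\boldsymbol{y}(t)\|_X<\varepsilon$ whenever $|s-t|<\delta$; hence for $\tau<\delta$ and $t\in I_k^\tau$, using that every $s\in I_k^\tau$ satisfies $|s-t|\le\tau<\delta$,
\[
\|\mathscr{J}_\tau[\boldsymbol{y}](t)-\boldsymbol{y}(t)\|_X=\Bigl\|\fint_{I_k^\tau}\bigl(\boldsymbol{y}(s)-\boldsymbol{y}(t)\bigr)\,ds\Bigr\|_X\le\fint_{I_k^\tau}\|\boldsymbol{y}(s)-\boldsymbol{y}(t)\|_X\,ds\le\varepsilon,
\]
so that $\|\mathscr{J}_\tau[\boldsymbol{y}]-\boldsymbol{y}\|_{L^\infty(I,X)}\le\varepsilon$ for all $\tau<\delta$, and in particular $\mathscr{J}_\tau[\boldsymbol{y}]\to\boldsymbol{y}$ in $L^p(I,X)$ as $\tau\to0$. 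For general $\boldsymbol{x}\in L^p(I,X)$ and $\varepsilon>0$ I would pick $\boldsymbol{y}\in C^0(\overline I,X)$ with $\|\boldsymbol{x}-\boldsymbol{y}\|_{L^p(I,X)}<\varepsilon$ and estimate, using linearity of $\mathscr{J}_\tau$ and (ii),
\[
\|\mathscr{J}_\tau[\boldsymbol{x}]-\boldsymbol{x}\|_{L^p(I,X)}\le\|\mathscr{J}_\tau[\boldsymbol{x}-\boldsymbol{y}]\|_{L^p(I,X)}+\|\mathscr{J}_\tau[\boldsymbol{y}]-\boldsymbol{y}\|_{L^p(I,X)}+\|\boldsymbol{y}-\boldsymbol{x}\|_{L^p(I,X)}\le2\varepsilon+\|\mathscr{J}_\tau[\boldsymbol{y}]-\boldsymbol{y}\|_{L^p(I,X)}.
\]
Letting $\tau\to0$ and then $\varepsilon\to0$ gives $\mathscr{J}_\tau[\boldsymbol{x}]\to\boldsymbol{x}$ in $L^p(I,X)$. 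Finally, since $\mathscr{J}_\tau[\boldsymbol{x}]\in\mathbfcal{S}^0(\mathcal{I}_\tau,X)$ for every $\tau$, the convergence just proved shows that $\bigcup_{\tau>0}\mathbfcal{S}^0(\mathcal{I}_\tau,X)$ is dense in $L^p(I,X)$, which completes the proof.
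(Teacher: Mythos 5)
Your proof is correct. The paper itself gives no argument for this proposition -- it simply cites \cite[Remark 8.15]{Rou05} -- and what you have written out is precisely the standard proof underlying that reference: the uniform $L^p$-contraction property of $\mathscr{J}_\tau$ via Jensen's inequality for Bochner integrals, followed by the density/three-epsilon argument reducing the convergence in (i) to uniformly continuous functions. Both steps are carried out accurately, so there is nothing to correct.
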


	\begin{proof}
		See \cite[Remark 8.15]{Rou05}.\hfill$\qed$
	\end{proof}

	Since we treat non-autonomous evolution equations we also need
        to discretize the time dependent family of operators in
        \eqref{eq:1.1}.  This will also be obtained by means of the
        Clem\'ent $0$-order quasi-interpolant.  Let $(X,Y,j)$ be an
        evolution triple, $I:=\left(0,T\right)$, $T<\infty$, $K \in
        \mathbb N$, $\tau= \frac{T}{K}>0$ and
        $1<p\leq q<\infty$. Let $A(t):X\to X^*$, $t\in I$,
        be a family of operators satisfying the conditions 
        (\hyperlink{A.1}{A.1})--(\hyperlink{A.4}{A.4}), and denote by
        $\mathbfcal{A}:L^p(I,X)\cap_{\boldsymbol{j}}L^q(I,Y)\to
        (L^p(I,X)\cap_{\boldsymbol{j}}L^q(I,Y))^*$ the induced
        operator (cf.~\eqref{eq:induced}). The
        \textbf{k-th temporal mean}
        $[A]^\tau_k:X\to X^*$, $k=1,\dots,K$,
        of $A(t):X\to X^*$, $t\in I$, is defined for every $u\in X$ via
        \begin{align*}
          [A]^\tau_k u:=\fint_{I_k^\tau}{A(s)u\,ds}\quad\text{ in }X^*.
        \end{align*} 
        The \textbf{Clement $0$-order quasi-interpolant}
        $\mathscr{J}_\tau[A](t):X\to X^*$, $t\in I$, of
        $A(t):X\to X^*$, $t\in I$, is defined for almost every
        $t\in I$ and $u\in X$ via
		\begin{align*}
		\mathscr{J}_\tau[A](t)u:=\sum_{k=1}^{K}{\chi_{I_k^\tau}(t)[A]^\tau_ku}\quad\text{ in }X^*.
		\end{align*} 
 The \textbf{Clement $0$-order quasi-interpolant}
 $\mathscr{J}_\tau[\mathbfcal{A}]:L^p(I,X)\cap_{\boldsymbol{j}}L^q(I,Y)\to
 (L^p(I,X)\cap_{\boldsymbol{j}}L^q(I,Y))^*$, of
 ${\mathbfcal{A}:L^p(I,X)\cap_{\boldsymbol{j}}L^q(I,Y)\to
   (L^p(I,X)\cap_{\boldsymbol{j}}L^q(I,Y))^*}$  is for all $\boldsymbol{u},\boldsymbol{v}\in L^p(I,X)\cap_{\boldsymbol{j}}L^q(I,Y)$ defined via
		\begin{align*}
		\langle \mathscr{J}_\tau[\mathbfcal{A}]\boldsymbol{u},\boldsymbol{v}\rangle_{L^p(I,X)\cap_{\boldsymbol{j}}L^q(I,Y)}:=\int_I{\langle \mathscr{J}_\tau[A] (t)(\boldsymbol{u}(t)),\boldsymbol{v}(t)\rangle_X\,dt}.
		\end{align*} 
Note that $\mathscr{J}_\tau[\mathbfcal{A}]$ is the induced operator of
the family of operators $\mathscr{J}_\tau[A](t):X\to X^*$, $t \in I$.
	\begin{prop}[Clement $0$-order quasi-interpolant for induced
          operators]\label{4.6} \newline
 Let $A(t):X\to X^*$, $t\in I$,
        be a family of operators satisfying the conditions 
        (\hyperlink{A.1}{A.1})--(\hyperlink{A.4}{A.4}), and denote by
        $\mathbfcal{A}:L^p(I,X)\cap_{\boldsymbol{j}}L^q(I,Y)\to
        (L^p(I,X)\cap_{\boldsymbol{j}}L^q(I,Y))^*$ the induced
        operator (cf.~\eqref{eq:induced}). Then, there holds: 
		\begin{description}[{(iii)}]
			\item[(i)] $[A]^\tau_k:X\to X^*$ is well-defined, bounded,  pseudo-monotone, and satisfies: \\[-3mm]
			\begin{description}[{(a)}]
				\item[(i.a)] $\langle [A]^\tau_ku,v\rangle_X\leq \lambda\|u\|_X^p+\gamma[1+\|ju\|_Y^q+\|jv\|_Y^q+\|v\|_X^p]$ for all $u,v\in X$. \\[-3mm]
				\item[(i.b)] $\langle [A]^\tau_ku,u\rangle_X\ge c_0\|u\|_X^p-c_1\|ju\|_Y^2-c_2$ for all $u\in X$. \\[-3mm]
			\end{description}
			\item[(ii)]  $\mathscr{J}_\tau[A](t):X\to X^*$, $t\in I$, satisfies the conditions (\hyperlink{A.1}{A.1})--(\hyperlink{A.4}{A.4}). \\[-3mm]
			
		\item[(iii)] $\mathscr{J}_\tau[\mathbfcal{A}]\!:\!L^p(I,X)\cap_{\boldsymbol{j}}\!L^q(I,Y)\!\to\!(L^p(I,X)\cap_{\boldsymbol{j}}\!L^q(I,Y))^*$~is~well-defined,~bounded~and~satisfies: \\[-3mm]
				\begin{description}[{(a)}]
				\item[(iii.a)] For all $\boldsymbol{u}_\tau\in \mathbfcal{S}^0(\mathcal{I}_\tau,X)$, $\boldsymbol{v}\in L^p(I,X)\cap_{\boldsymbol{j}}L^q(I,Y)$ 
holds
				\begin{align*}
					\langle \mathscr{J}_\tau[\mathbfcal{A}]\boldsymbol{u}_\tau,\boldsymbol{v}\rangle_{L^p(I,X)\cap_{\boldsymbol{j}}L^q(I,Y)}=\langle \mathbfcal{A}\boldsymbol{u}_\tau, \mathscr{J}_\tau[\boldsymbol{v}]\rangle_{L^p(I,X)\cap_{\boldsymbol{j}}L^q(I,Y)}.
				\end{align*}
				\item[(iii.b)] If the functions $\boldsymbol{u}_\tau\in\mathbfcal{S}^0(\mathcal{I}_\tau,X)$, $\tau>0$, are bounded in $L^p(I,X)\cap_{\boldsymbol{j}}L^q(I,Y)$, then
				\begin{align*}
						\mathbfcal{A}\boldsymbol{u}_\tau-\mathscr{J}_\tau[\mathbfcal{A}]\boldsymbol{u}_\tau\;\;\rightharpoonup\;\;\boldsymbol{0}\quad\text{ in }(L^p(I,X)\cap_{\boldsymbol{j}}L^q(I,Y))^*\quad(\tau\to 0).
				\end{align*}
				\item[(iii.c)] 
                                  \begin{gather*}
                                    \|\mathscr{J}_\tau[\mathbfcal{A}]\boldsymbol{u}_\tau\|_{(L^p(I,X)\cap_{\boldsymbol{j}}L^q(I,Y))^*}\leq
                                    \|\mathbfcal{A}\boldsymbol{u}_\tau\|_{(L^p(I,X)\cap_{\boldsymbol{j}}L^q(I,Y))^*}.
                                  \end{gather*}
			\end{description}
		\end{description}
	\end{prop}

	\begin{proof}
		\textbf{ad (i)} Let $u\in X$. Due to
                (\hyperlink{A.2}{A.2}) the function $A(\cdot)u:I\to
                X^*$ is Bochner measurable.  (\hyperlink{A.4}{A.4})
                guarantees that $\|A(\cdot)u\|_{X^*}\in L^1(I)$, and
                thus the Bochner integrability~of $A(\cdot)u:I\to
                X^*$. As a result, the Bochner integral
                $[A]^\tau_ku=\fint_{I^\tau_k}{A(s)u\,ds}\in X^*$
                exists, i.e.,  $[A]^\tau_k:X\to X^*$ is
                well-defined. The inequalities \textbf{(i.a)} and
                \textbf{(i.b)} are obvious. In particular, we gain
                from inequality \textbf{(i.a)}  the boundedness of
                $[A]^\tau_k:X\to X^*$. So, it is left to show the
                pseudo-monotonicity. Therefore, let $(u_n)_{n\in
                  \mathbb{N}}\subseteq X$ be a sequence such that 
		\begin{align}
			u_n\;\;\rightharpoonup\;\; u\quad\text{ in }X\quad(n\to\infty),\label{eq:4.7}\\
			\limsup_{n\to\infty}{\langle [A]^\tau_ku_n,u_n-u\rangle_X}\leq 0.\label{eq:4.8}
		\end{align}
		If we set $\boldsymbol{u}_n:=u_n\chi_{I_k^\tau}\in
                L^\infty(I,X)$, $n\in \mathbb{N}$, and
                $\boldsymbol{u}:=u\chi_{I_k^\tau}\in L^\infty(I,X)$,
                then  \eqref{eq:4.7}, the Lebesgue theorem on
                dominated convergence and the properties of the
                induced embedding $\boldsymbol{j}$ imply 
		\begin{alignat}{2}
		\boldsymbol{u}_n&\;\;\rightharpoonup\;\; \boldsymbol{u}&&\quad\text{ in 
}L^p(I,X)\;\quad(n\to \infty),\label{eq:4.9}\\
		\boldsymbol{j}\boldsymbol{u}_n&\;\;\overset{\ast}{\rightharpoondown}\;\;\boldsymbol{j}\boldsymbol{u}&&\quad\text{ in }L^\infty(I,Y)\quad(n\to\infty),\label{eq:4.10}\\
		(\boldsymbol{j}\boldsymbol{u}_n)(t)&\overset{n\to\infty}{\rightharpoonup}(\boldsymbol{j}\boldsymbol{u})(t)&&\quad\text{ in }Y\quad\text{for a.e. 
}t\in I.\label{eq:4.11}
		\end{alignat}
		In addition, from \eqref{eq:4.8} we infer
		\begin{align}
			\limsup_{n\to\infty}{\langle \mathbfcal{A}\boldsymbol{u}_n,\boldsymbol{u}_n-\boldsymbol{u}\rangle_{L^p(I,X)\cap_{\boldsymbol{j}}L^q(I,Y)}}=\tau\limsup_{n\to\infty}{\langle [A]^\tau_ku_n,u_n-u\rangle_{X}}\leq 0.\label{eq:4.12}
		\end{align}
		Note that the constant approximation $V_n=X$, $n\in
                \mathbb{N}$, is trivially a quasi non-conforming
                approximation of $X$ in $X$ (cf.~Remark \ref{rem:6.5} (i)). Thus,
                Proposition~\ref{3.9} yields that the induced operator 
                $\mathbfcal{A} :L^p(I,X)\cap_{\boldsymbol{j}}L^q(I,Y)\to (L^p(I,X)\cap_{\boldsymbol{j}}L^q(I,Y))^*$
                is quasi
                non-conforming Bochner pseudo-monotone with respect to
                $V_n=X$, $n\in \mathbb{N}$. Consequently, we obtain from
                \eqref{eq:4.9}--\eqref{eq:4.12} that for all
                $\boldsymbol{v}\in
                L^p(I,X)\cap_{\boldsymbol{j}}L^q(I,Y)$ there holds 
		\begin{align}
		\langle \mathbfcal{A}\boldsymbol{u},\boldsymbol{u}-\boldsymbol{v}\rangle_{L^p(I,X)\cap_{\boldsymbol{j}}L^q(I,Y)}\leq\liminf_{n\to\infty}{\langle 
\mathbfcal{A}\boldsymbol{u}_n,\boldsymbol{u}_n-\boldsymbol{v}\rangle_{L^p(I,X)\cap_{\boldsymbol{j}}L^q(I,Y)}}.\label{eq:4.13}
		\end{align}
		If we choose in \eqref{eq:4.13} $\boldsymbol{v}:=v\chi_{I_k^\tau}\in
                L^\infty(I,X)$ with $v\in X$ and divide by $\tau>0$, we conclude
		\begin{align*}
			\langle [A]^\tau_ku,u-v\rangle_{X }\leq\liminf_{n\to\infty}{\langle [A]^\tau_ku_n,u_n-v\rangle_{X}}.
		\end{align*}
                In other words, $[A]^\tau_k:X\to X^*$ is pseudo-monotone. 
\\[-3mm]
		
		\textbf{ad (ii)} The assertion follows immediately from \textbf{(i)} and the definition of $\mathscr{J}_\tau[A](t)$, $t\in I$. \\[-3mm]
		
		\textbf{ad (iii)} Since $\mathscr{J}_\tau[\mathbfcal{A}]$ is the induced operator of
the family of operators $\mathscr{J}_\tau[A](t)$, $t \in I$,  the well-definiteness and boundedness
                of $\mathscr{J}_\tau[\mathbfcal{A}]$ results from
                \textbf{(ii)} in conjunction with
                Proposition~\ref{3.9} applied again in the trivial
                setting of the constant approximation $V_n=X$, $n\in
                \mathbb{N}$. \\[-3mm]

                \allowdisplaybreaks		
		\textbf{ad (iii.a)} Let $\boldsymbol{u}_\tau\in \mathbfcal{S}^0(\mathcal{I}_\tau,X)$ and $\boldsymbol{v}\in L^p(I,X)\cap_{\boldsymbol{j}}L^q(I,Y)$. Then, using for every $t,s\in I_k^\tau$, $k=1,\dots,K$, that $\langle 
A(s)(\boldsymbol{u}_\tau(t)),\boldsymbol{v}(t)\rangle_X=\langle A(s)(\boldsymbol{u}_\tau(s)),\boldsymbol{v}(t)\rangle_X$ and Fubini's theorem, we infer 
		\begin{align*}
				\langle \mathscr{J}_\tau[\mathbfcal{A}]\boldsymbol{u}_\tau,\boldsymbol{v}\rangle_{L^p(I,X)\cap_{\boldsymbol{j}}L^q(I,Y)}&=\int_I{\langle \mathscr{J}_\tau[A](t)(\boldsymbol{u}(t)),\boldsymbol{v}(t)\rangle_X\,dt}\\&=\sum_{k=1}^{K}{\int_{I_k^\tau}{\Big\langle\fint_{I_k^\tau}{ A(s)(\boldsymbol{u}_\tau(t))\,ds},\boldsymbol{v}(t)\Big\rangle_{\!\!X}dt}}
				\\&=\sum_{k=1}^{K}{\int_{I_k^\tau}{\Big\langle A(s)(\boldsymbol{u}_\tau(s)),\fint_{I_k^\tau}{\boldsymbol{v}(t)\,dt}\Big\rangle_{\!\!X}\,ds}}\\&=
				\int_I{\langle A(s)(\boldsymbol{u}(s)), \mathscr{J}_\tau[\boldsymbol{v}](s)\rangle_{X}\,ds}
				\\&=\langle \mathbfcal{A}\boldsymbol{u}_\tau,\mathscr{J}_\tau[\boldsymbol{v}]\rangle_{L^p(I,X)\cap_{\boldsymbol{j}}L^q(I,Y)}.
		\end{align*}
		
		\textbf{ad (iii.b)} Let the family $\boldsymbol{u}_\tau\in\mathbfcal{S}^0(\mathcal{I}_\tau,X)$, $\tau>0$, be bounded in $L^p(I,X)\cap_{\boldsymbol{j}}L^q(I,Y)$. Then, by  the boundedness of $\mathbfcal{A}:L^p(I,X)\cap_{\boldsymbol{j}}L^q(I,Y)\to (L^p(I,X)\cap_{\boldsymbol{j}}L^q(I,Y))^*$ (cf.~Proposition~\ref{3.9}), the family $(\mathbfcal{A}\boldsymbol{u}_\tau)_{\tau>0}\subseteq (L^p(I,X)\cap_{\boldsymbol{j}}L^q(I,Y))^*$ is bounded 
as well. Therefore, also using \textbf{(iii.a)}, we conclude for every $\boldsymbol{v}\in L^p(I,X)\cap_{\boldsymbol{j}}L^q(I,Y)$ that
		\begin{align*}
			\langle \mathbfcal{A}\boldsymbol{u}_\tau-\mathscr{J}_\tau[\mathbfcal{A}]\boldsymbol{u}_\tau,\boldsymbol{v}\rangle_{L^p(I,X)\cap_{\boldsymbol{j}}L^q(I,Y)}=	\langle \mathbfcal{A}\boldsymbol{u}_\tau,\boldsymbol{v}-\mathscr{J}_\tau[\boldsymbol{v}]\rangle_{L^p(I,X)\cap_{\boldsymbol{j}}L^q(I,Y)}\;\;\to\;\; 0\quad(\tau\to 0),
		\end{align*}
		where we also used Proposition \ref{4.4} (i). \\[-3mm]
		
		\textbf{ad (iii.c)} Using \textbf{(iii.a)} and
                Proposition \ref{4.4} (ii), we deduce
		\begin{align*}
		\|\mathscr{J}_\tau[\mathbfcal{A}]\boldsymbol{u}_\tau\|_{(L^p(I,X)\cap_{\boldsymbol{j}}L^q(I,Y))^*}&=\sup_{\|\boldsymbol{v}\|_{L^p(I,X)\cap_{\boldsymbol{j}}L^q(I,Y)}\leq 1}{\langle \mathscr{J}_\tau[\mathbfcal{A}]\boldsymbol{u}_\tau,\boldsymbol{v}\rangle_{L^p(I,X)\cap_{\boldsymbol{j}}L^q(I,Y)}}\\&=\sup_{\|\boldsymbol{v}\|_{L^p(I,X)\cap_{\boldsymbol{j}}L^q(I,Y)}\leq 1}{\langle \mathbfcal{A}\boldsymbol{u}_\tau,\mathscr{J}_\tau[\boldsymbol{v}]\rangle_{L^p(I,X)\cap_{\boldsymbol{j}}L^q(I,Y)}}\\&\leq 	\|\mathbfcal{A}\boldsymbol{u}_\tau\|_{(L^p(I,X)\cap_{\boldsymbol{j}}L^q(I,Y))^*}.\tag*{$\qed$}
		\end{align*}
	\end{proof}

	\section{Fully discrete, quasi non-conforming approximation}
	\label{sec:6}
	
	In this section we formulate the exact framework of a  quasi
        non-conforming Rothe--Galerkin approximation, prove its
        well-posedness, i.e., the existence of iterates, and its
        stability, i.e., the boundedness of the corresponding double
        sequence of piece-wise constant interpolants. Moreover, we
        prove the main result of this paper, Theorem \ref{5.17}, which
        shows the weak convergence of a diagonal subsequence towards a
        weak solution of problem \eqref{eq:1.1}.

	\begin{asum}\label{asum}
		Let  $I\!:=\!\left(0,T\right)$, $T\!<\!\infty$ and $1\!<\!p\!\leq \!q\!<\!\infty$. We make the following assumptions:
		\begin{description}[{(iii)}]
                \item[(i)] \textbf{Spaces:} $(V,H,j)$ and
                  $(X,Y,j)$ are as in Definition \ref{3.1} and 
                  $(V_n)_{n\in\mathbb{N}}$ is a quasi non-conforming approximation of $V$ in $X$.
                \item[(ii)] \textbf{Initial data:} ${u}_0\!\in\!H$ and there is a sequence $u_n^0\!\in\! V_n$, $n\in\mathbb{N}$, such that ${u_n^0\to {u}_0}$~in~$Y$~${(n\!\to\! \infty)}$ and $\sup_{n\in \mathbb{N}}{\|ju_n^0\|_Y}\leq 
\|{u}_0\|_H$.\footnote{For a quasi non-conforming approximation Proposition \ref{3.2} guarantees the existence of such a sequence.}
			\item[(iii)] \textbf{Right-hand side}: $\boldsymbol{f}\in L^{p'}(I,X^*)$.
			\item[(iv)] \textbf{Operators}: $A(t):X\to X^*$, $t\in I$, is a family 
of operators satisfying (\hyperlink{A.1}{A.1})--(\hyperlink{A.4}{A.4}) and $\mathbfcal{A}:L^p(I,X)\cap_{\boldsymbol{j}}L^\infty(I,Y)\to (L^p(I,X)\cap_{\boldsymbol{j}}L^q(I,Y))^*$ the corresponding induced operator.
		\end{description}
	\end{asum}
	
	 Furthermore, we set $H_n:=j(V_n)\subseteq Y$ equipped with $(\cdot,\cdot)_Y$, denote by $j_n:V_n\to H_n$ the restriction of $j$ to $V_n$ and by $R_n:H_n\to H_n^*$ the corresponding Riesz isomorphism with respect to $(\cdot,\cdot)_Y$. As $j_n$ is an isomorphism, the triple $(V_n,H_n,j_n)$ 
is an evolution triple with canonical embedding $e_n:=j_n^*R_nj_n:V_n\to V_n^*$, which satisfies
	\begin{align}
	\langle e_nv_n,w_n\rangle_{V_n}=(j_nv_n,j_nw_n)_Y\quad\text{ for
          all }v_n,w_n\in V_n. \label{eq:iden}
	\end{align}
	Putting all together leads us to the following algorithm:

	\begin{alg}[Quasi non-conforming Rothe--Galerkin scheme] 
          Let Assumption \eqref{asum} be satisfied.
          For given $K,n\in \mathbb{N}$ 
          the sequence of iterates 
          $(u_n^{k})_{k=0,\dots,K}\subseteq V_n$ is  given solving the
          implicit Rothe--Galerkin scheme for $\tau=\frac{T}{K}$ and $k=1,\dots,K$
          \begin{align}
            (d_\tau ju_n^k,jv_n)_Y+\langle [A]^\tau_k u_n^k,v_n\rangle_X= 
\langle [\boldsymbol{f}]_k^\tau,v_n\rangle_X\quad\text{ for all }v_n\in V_n.\label{eq:4.15}
          \end{align}
	\end{alg}

		\begin{rmk}\label{rem:6.5}
			Note that the Rothe--Galerkin scheme \eqref{eq:4.15} also covers pure Rothe schemes, i.e., without spatial approximation, and fully discrete conforming approximations:
			\begin{description}[{(ii)}]
				\item[(i)] If $X=V$, $Y=H$, and
                                  $V_n=X$, $n\in \mathbb{N}$, then \eqref{eq:4.15} is  a pure Rothe scheme.
				\item[(ii)] If $X=V$, $Y=H$, and the
                                  closed subspaces $(V_n)_{n\in
                                    \mathbb{N}}$ satisfy
                                  (\hyperlink{C.1}{C.1})--(\hyperlink{C.2}{C.2}),
                                  then \eqref{eq:4.15} is a
                                  conforming Rothe--Galerkin scheme. 
			\end{description}
		\end{rmk}
	
                \begin{prop}[Well-posedness of \eqref{eq:4.15}]\label{5.1}
		Let Assumption \eqref{asum} be satisfied and set
                \linebreak ${\tau_0:=\frac{1}{4c_1}}$. Then, for all $K,n\in \mathbb{N}$ with $\tau=\frac{T}{K}<\tau_0$ there exist iterates $(u_n^k)_{k=1,\dots,K}\subseteq V_n$, solving~\eqref{eq:4.15}.
	\end{prop}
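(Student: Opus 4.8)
The plan is to construct the iterates $(x_n^k)_{k=1,\dots,K}$ recursively in $k$, solving at each step a single stationary operator equation in the reflexive Banach space $V_n$ by means of the surjectivity result for bounded, pseudo-monotone, coercive operators, Proposition~\ref{2.9}. So suppose $x_n^{k-1}\in V_n$ has already been produced (for $k=1$ it is the given $x_n^0$). With the help of the identity \eqref{eq:iden} for the canonical embedding $e_n\colon V_n\to V_n^*$ of the evolution triple $(V_n,H_n,j_n)$, the $k$-th equation of \eqref{eq:4.15} is equivalent to finding $x_n^k\in V_n$ with
\begin{align*}
  \Big\langle\tfrac1\tau e_n x_n^k+[A]^\tau_k x_n^k,v_n\Big\rangle_{V_n}=\langle F_{k,n},v_n\rangle_{V_n}\qquad\text{for all }v_n\in V_n ,
\end{align*}
where $[A]^\tau_k$ is read as a map $V_n\to V_n^*$ by restriction (admissible, since $V_n$ carries the norm of $X$) and $F_{k,n}\in V_n^*$ is the fixed functional $\langle F_{k,n},v_n\rangle_{V_n}:=\langle[\boldsymbol{f}]^\tau_k,v_n\rangle_X+\tfrac1\tau(jx_n^{k-1},jv_n)_Y$. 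Setting $\mathcal{A}_{k,n}:=\tfrac1\tau e_n+[A]^\tau_k\colon V_n\to V_n^*$, it then suffices to show $R(\mathcal{A}_{k,n})=V_n^*$.

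Next I would verify the hypotheses of Proposition~\ref{2.9} for $\mathcal{A}_{k,n}$. The space $V_n$ is reflexive as a closed subspace of the reflexive space $X$. The operator $e_n$ is linear and bounded, and $[A]^\tau_k\colon X\to X^*$ is bounded by Proposition~\ref{4.6}(i), hence so is its restriction to $V_n$; thus $\mathcal{A}_{k,n}$ is bounded. Moreover $\tfrac1\tau e_n$ is linear, bounded and monotone (since $\langle e_n v,v\rangle_{V_n}=\|jv\|_Y^2\ge 0$), hence pseudo-monotone, while the restriction to $V_n$ of the pseudo-monotone operator $[A]^\tau_k\colon X\to X^*$ from Proposition~\ref{4.6}(i) is again pseudo-monotone — weak convergence in $V_n$ entails weak convergence in $X$, and only test functions from $V_n$ occur. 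Since the sum of two bounded pseudo-monotone operators is pseudo-monotone, $\mathcal{A}_{k,n}$ is pseudo-monotone.

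The only point of substance is coercivity, and this is exactly where the step-size restriction enters. By \eqref{eq:iden} and estimate (i.b) of Proposition~\ref{4.6}, for every $v_n\in V_n$,
\begin{align*}
  \langle\mathcal{A}_{k,n}v_n,v_n\rangle_{V_n}=\tfrac1\tau\|jv_n\|_Y^2+\langle[A]^\tau_k v_n,v_n\rangle_X\ge\Big(\tfrac1\tau-c_1\Big)\|jv_n\|_Y^2+c_0\|v_n\|_X^p-c_2 .
\end{align*}
Because $\tau<\tau_0=\tfrac1{4c_1}$ forces $\tfrac1\tau-c_1\ge 0$ (if $c_1=0$ one reads $\tau_0=\infty$ and no restriction is needed), the first term is non-negative and may be dropped, so $\langle\mathcal{A}_{k,n}v_n,v_n\rangle_{V_n}\ge c_0\|v_n\|_X^p-c_2$; dividing by $\|v_n\|_X$ and letting $\|v_n\|_X\to\infty$ gives coercivity, since $p>1$. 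Proposition~\ref{2.9} now yields $R(\mathcal{A}_{k,n})=V_n^*$, in particular $F_{k,n}\in R(\mathcal{A}_{k,n})$, i.e., there is $x_n^k\in V_n$ solving the $k$-th equation of \eqref{eq:4.15}; iterating this from $k=1$ to $k=K$ produces the full sequence $(x_n^k)_{k=1,\dots,K}$. I expect no deep obstacle: the whole argument is a standard application of Brezis' surjectivity theorem, and the only care needed is the reduction of \eqref{eq:4.15} to a stationary equation on $V_n$ together with the remark that restriction to $V_n$ preserves boundedness and pseudo-monotonicity of $[A]^\tau_k$; coercivity — and hence solvability — then follows at once from the coercivity bound (i.b) and the condition $\tau<1/(4c_1)$.
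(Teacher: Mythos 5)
Your proposal is correct and follows essentially the same route as the paper: reduce the $k$-th equation via \eqref{eq:iden} to a stationary operator equation on $V_n$, observe that $\tfrac1\tau e_n$ plus the restriction of $[A]^\tau_k$ (bounded and pseudo-monotone by Proposition~\ref{4.6}(i)) is bounded, pseudo-monotone and, thanks to (i.b) and $\tau<\tau_0$, coercive, and conclude by Proposition~\ref{2.9}. The only cosmetic difference is that the paper keeps the surplus term $3c_1\|j_nx\|_Y^2$ in the coercivity bound rather than dropping it.
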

	
	\begin{proof}
		Using \eqref{eq:iden} and the identity mapping $\textup{id}_{V_n}\colon 
V_n
                \to X$, we see that \eqref{eq:4.15} is equivalent to 
		\begin{align}
		(\textup{id}_{V_n})^*\big ([\boldsymbol{f}]^\tau_k\big
                  )+\frac{1}{\tau}e_nu_n^{k-1} \in  R\Big(\frac{1}{\tau}e_n+(\textup{id}_{V_n})^*\circ [A]^\tau_k \circ
                  \textup{id}_{V_n}\Big),\quad\text{for all }k=1,\dots,K.\label{eq:5.2}
		\end{align}
		We fix an arbitrary $k=1,\dots,K$. Apparently,
                $\frac{1}{\tau}e_n:V_n\to V_n^*$ is linear and
                continuous. Using \eqref{eq:iden}, we infer that
                $\langle
                \frac{1}{\tau}e_{n}u,u\rangle_{V_n}=\frac{1}{\tau}\|j_nu\|_Y^2\ge
                0$ for all $u\in V_n$, i.e.,
                $\frac{1}{\tau}e_n:V_n\to V_n^*$ is positive definite, and
                thus monotone. In consequence,
                $\frac{1}{\tau}e_n:V_n\to V_n^*$ is pseudo-monotone. Since
                the conditions
                (\hyperlink{A.1}{A.1})--(\hyperlink{A.4}{A.4}) are
                inherited from $A\colon X\to X^*$ to
                $(\textup{id}_{V_n})^*\circ A \circ
                \textup{id}_{V_n}\colon V_n \to V_n^*$ and since
                $(\textup{id}_{V_n})^*\circ [A]^\tau_k\circ
                \textup{id}_{V_n}=[(\textup{id}_{V_n})^*\circ A \circ
                \textup{id}_{V_n}]^\tau_k$, Proposition \ref{4.6} (i)
                guarantees that the operator
                $(\textup{id}_{V_n})^*\circ [A]_\tau^k \circ
                \textup{id}_{V_n}:V_n\to V_n^*$ is bounded and
                pseudo-monotone. Altogether, we conclude that the sum
                $\frac{1}{\tau}e_n+(\textup{id}_{V_n})^*\circ
                [A]_\tau^k\circ \textup{id}_{V_n}:V_n\to V_n^*$ is
                bounded and pseudo-monotone.  In addition, as
                $\tau<\frac{1}{2c_1}$, combining \eqref{eq:iden} and
                Proposition~\ref{4.6}~(i.b), provides for all
                $u\in V_n$
                \begin{align*}
                \Big\langle \Big(\frac{1}{\tau}e_n+(\textup{id}_{V_n})^* \circ
                  [A]^\tau_k\circ \textup{id}_{V_n}\Big)u,u\Big\rangle_{V_n}\ge
                3c_1\|j_nu\|_Y^2+c_0\|u\|_X^p-c_2,
                \end{align*}
                i.e.,  $\frac{1}{\tau}e_n+(\textup{id}_{V_n})^*\circ
                [A]^\tau_k\circ \textup{id}_{V_n}:V_n\to V_n^*$ is coercive. Hence, Proposition~\ref{2.9} proves \eqref{eq:5.2}.\hfill$\qed$
	\end{proof}
	
	\begin{prop}[Stability of \eqref{eq:4.15}]\label{apriori}
	Let Assumption \eqref{asum} be satisfied and set
        $\tau_0:=\frac{1}{4c_1}$. Then, there exists a constant $M>0$
        (not depending on $K,n\in \mathbb{N}$), such that the
        piece-wise constant interpolants
        $\overline{\boldsymbol{u}}_n^\tau\in
        \mathbfcal{S}^0(\mathcal{I}_\tau,X)$, $K,n\in \mathbb{N}$ with
        $\tau=\frac{T}{K}\in \left(0,\tau_0\right)$, and piece-wise affine
        interpolants $\hat{\boldsymbol{u}}_n^\tau\in
        W^{1,\infty}(I,X)$, $n\in \mathbb{N}$, $\tau\in (0,\tau_0)$ (cf.~\eqref{eq:polant}) generated by iterates~${(u_n^k)_{k=0,\dots,K}\subseteq 
V_n}$, $K,n\in \mathbb{N}$ with
        $\tau=\frac{T}{K}\in \left(0,\tau_0\right)$, solving \eqref{eq:4.15}, satisfy the following estimates:
		\begin{align}
		\|\overline{\boldsymbol{u}}_n^\tau\|_{L^p(I,X)\cap_{\boldsymbol{j}}L^\infty(I,Y)}&\leq M,\label{eq:5.4}\\
		\|\boldsymbol{j}\hat{\boldsymbol{u}}_n^\tau\|_{L^\infty(I,Y)}&\leq M,\label{eq:5.5}\\
		\|\mathbfcal{A}\overline{\boldsymbol{u}}_n^\tau\|_{(L^p(I,X)\cap_{\boldsymbol{j}}L^q(I,Y))^*}&\leq M,\label{eq:5.6}\\
		\|e_n(\hat{\boldsymbol{u}}_n^\tau-\overline{\boldsymbol{u}}_n^\tau)\|_{L^{q'}(I,V_n^*)}&\leq\tau  \big(\|\boldsymbol{f}\|_{L^{p'}(I,X^*)}+M\big).\label{eq:5.7}
		\end{align}
	\end{prop}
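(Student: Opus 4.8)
The plan is to run the classical implicit‑Euler energy estimate, but at the level of the finite‑dimensional spaces $V_n$, and to close it with a discrete Gronwall inequality; the threshold $\tau_0=\frac1{4c_1}$ is dictated precisely by the absorption of the non‑coercive defect $-c_1\|jx\|_Y^2$ in (\hyperlink{A.3}{A.3}). First I would test \eqref{eq:4.15} with $v_n=x_n^k$. The identity $(a-b,a)_Y=\tfrac12\|a\|_Y^2-\tfrac12\|b\|_Y^2+\tfrac12\|a-b\|_Y^2$ gives $(d_\tau jx_n^k,jx_n^k)_Y=\tfrac12 d_\tau\|jx_n^k\|_Y^2+\tfrac\tau2\|d_\tau jx_n^k\|_Y^2$; Proposition~\ref{4.6}(i.b) bounds $\langle[A]^\tau_kx_n^k,x_n^k\rangle_X$ from below by $c_0\|x_n^k\|_X^p-c_1\|jx_n^k\|_Y^2-c_2$; and Young's inequality gives $\langle[\boldsymbol f]^\tau_k,x_n^k\rangle_X\le\tfrac{c_0}2\|x_n^k\|_X^p+c\,\|[\boldsymbol f]^\tau_k\|_{X^*}^{p'}$. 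Multiplying the resulting inequality by $\tau$, summing over $k=1,\dots,m$ for arbitrary $m\le K$, telescoping, using $\|jx_n^0\|_Y\le\|x_0\|_H$ from Assumption~\ref{asum}(ii) and $\tau\sum_{k=1}^{K}\|[\boldsymbol f]^\tau_k\|_{X^*}^{p'}=\|\mathscr J_\tau[\boldsymbol f]\|_{L^{p'}(I,X^*)}^{p'}\le\|\boldsymbol f\|_{L^{p'}(I,X^*)}^{p'}$ from Proposition~\ref{4.4}(ii), I obtain
\begin{align*}
\tfrac12\|jx_n^m\|_Y^2+\tfrac{c_0}2\,\tau\sum_{k=1}^{m}\|x_n^k\|_X^p\le C_0+c_1\,\tau\sum_{k=1}^{m}\|jx_n^k\|_Y^2,
\end{align*}
where $C_0:=\tfrac12\|x_0\|_H^2+c_2T+c\|\boldsymbol f\|_{L^{p'}(I,X^*)}^{p'}$ is independent of $K,n$.

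Since $\tau<\tau_0$ means $c_1\tau<\tfrac14$, dropping the $X$‑term and isolating the $k=m$ summand yields $\|jx_n^m\|_Y^2\le 4C_0+4c_1\tau\sum_{k=1}^{m-1}\|jx_n^k\|_Y^2$, whence the discrete Gronwall inequality together with $\tau m\le T$ gives $\sup_m\|jx_n^m\|_Y^2\le 4C_0\,e^{4c_1T}=:M_1^2$, uniformly in $K,n$. Reinserting this into the energy inequality with $m=K$ bounds $\tau\sum_{k=1}^{K}\|x_n^k\|_X^p$ uniformly, and since $\|\overline{\boldsymbol x}_n^\tau\|_{L^p(I,X)}^p=\tau\sum_{k=1}^{K}\|x_n^k\|_X^p$ and $\|\boldsymbol j\overline{\boldsymbol x}_n^\tau\|_{L^\infty(I,Y)}=\max_k\|jx_n^k\|_Y$, this proves \eqref{eq:5.4}. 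Estimate \eqref{eq:5.5} is then immediate: on each $I_k^\tau$ the value $\hat{\boldsymbol x}_n^\tau(t)$ is a convex combination of $x_n^{k-1}$ and $x_n^k$, so $\|\boldsymbol j\hat{\boldsymbol x}_n^\tau(t)\|_Y\le\max\{\|jx_n^{k-1}\|_Y,\|jx_n^k\|_Y\}\le M_1$.

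For \eqref{eq:5.6} one notes $\overline{\boldsymbol x}_n^\tau\in L^p(I,X)\cap_{\boldsymbol j}L^\infty(I,Y)\subseteq L^p(I,X)\cap_{\boldsymbol j}L^q(I,Y)$ with norm controlled by \eqref{eq:5.4}, and applies the boundedness of $\mathbfcal A$ from Proposition~\ref{3.9}. For \eqref{eq:5.7}, using \eqref{eq:iden} the scheme \eqref{eq:4.15} rewrites as $e_nd_\tau x_n^k=(\textup{id}_{V_n})^*\big([\boldsymbol f]^\tau_k-[A]^\tau_kx_n^k\big)$ in $V_n^*$, and on $I_k^\tau$ one has $\hat{\boldsymbol x}_n^\tau-\overline{\boldsymbol x}_n^\tau=-(k-\tfrac t\tau)(x_n^k-x_n^{k-1})$ with $|k-\tfrac t\tau|\le 1$, hence
\begin{align*}
\|e_n(\hat{\boldsymbol x}_n^\tau-\overline{\boldsymbol x}_n^\tau)\|_{L^{q'}(I,V_n^*)}\le\tau\,\|\boldsymbol g_n^\tau\|_{L^{q'}(I,V_n^*)},
\end{align*}
where $\boldsymbol g_n^\tau$ is the piece‑wise constant function equal to $e_nd_\tau x_n^k$ on $I_k^\tau$, i.e. the $V_n$‑restriction of $\mathscr J_\tau[\boldsymbol f]-\mathscr J_\tau[\mathbfcal A]\overline{\boldsymbol x}_n^\tau$. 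Since $\textup{id}_{V_n}\colon V_n\to X$ is a norm‑one embedding and since a map $I\to V_n^*$ induced by an $X^*$‑valued family is tested only against $L^q(I,V_n)$‑functions, which embed into $L^p(I,X)\cap_{\boldsymbol j}L^q(I,Y)$ by Hölder in time and boundedness of $j$, one concludes $\|\boldsymbol g_n^\tau\|_{L^{q'}(I,V_n^*)}\le C\big(\|\mathscr J_\tau[\boldsymbol f]\|_{L^{p'}(I,X^*)}+\|\mathscr J_\tau[\mathbfcal A]\overline{\boldsymbol x}_n^\tau\|_{(L^p(I,X)\cap_{\boldsymbol j}L^q(I,Y))^*}\big)$, and Proposition~\ref{4.4}(ii), Proposition~\ref{4.6}(iii.c) and \eqref{eq:5.6} give \eqref{eq:5.7}.

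I expect the only genuinely delicate step to be the discrete Gronwall argument, i.e. absorbing $c_1\tau\sum_k\|jx_n^k\|_Y^2$ uniformly in $K,n$ — this is exactly where the defect in (\hyperlink{A.3}{A.3}) forces $\tau<\tau_0=\tfrac1{4c_1}$ — together with the minor point in \eqref{eq:5.7} of working in the intersection‑space dual rather than in $L^{q'}(I,X^*)$, in which $\mathscr J_\tau[\mathbfcal A]\overline{\boldsymbol x}_n^\tau$ is in general not bounded. Everything else is routine bookkeeping once \eqref{eq:5.4} is in hand.
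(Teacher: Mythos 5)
Your proposal is correct and follows essentially the same route as the paper: testing with $v_n=x_n^k$, the discrete energy identity $(d_\tau jx_n^k,jx_n^k)_Y=\tfrac12 d_\tau\|jx_n^k\|_Y^2+\tfrac\tau2\|d_\tau jx_n^k\|_Y^2$, the lower bound from Proposition~\ref{4.6}~(i.b), Young's inequality with $\varepsilon=\tfrac{c_0}{2}$ together with Proposition~\ref{4.4}~(ii), absorption of the $k=l$ term using $4c_1\tau<1$, and the discrete Gronwall inequality; the derivation of \eqref{eq:5.5}--\eqref{eq:5.7} via the boundedness of $\mathbfcal{A}$, the identity $e_n(\hat{\boldsymbol x}_n^\tau-\overline{\boldsymbol x}_n^\tau)(t)=(t-k\tau)\tfrac{d_{e_n}\hat{\boldsymbol x}_n^\tau}{dt}(t)$ and Proposition~\ref{4.6}~(iii.c) likewise matches the paper. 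No gaps.
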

	
	\begin{proof}
		We use $v_n=u_n^k\in V_n$,  $k=1,\dots,l$, for arbitrary
                $l=1,\dots,K$ in \eqref{eq:4.15}, multiply~by~${\tau\in (0,\tau_0)}$, sum with respect to $k=1,\dots,l$, use \eqref{eq:4.2} and $\sup_{n\in \mathbb{N}}{\|ju^0_n\|_Y\leq \|{u}_0\|_H}$, to obtain~for~every~${l=1,\dots,K}$
		\begin{align}\begin{split}
		\frac{1}{2}\|j u_n^l\|_Y^2+\sum_{k=1}^l{\tau\langle[A]^k_\tau u_n^k,u_n^k\rangle_X}\leq \frac{1}{2}\|{u}_0\|_H^2+
		\sum_{k=1}^l{\tau\langle[\boldsymbol{f}]^k_\tau ,u_n^k\rangle_X}.
		\end{split}
		\label{eq:5.8}
		\end{align}
		Applying the weighted $\varepsilon$-Young inequality with constant $c(\varepsilon):=(p\varepsilon)^{1-p'}/p'$ for all $\varepsilon>0$, using $\|\mathscr{J}_\tau[\boldsymbol{f}]\|_{L^{p'}(I,X^*)}\leq \|\boldsymbol{f}\|_{L^{p'}(I,X^*)}$ (cf.~Proposition~\ref{4.4} (ii)), we deduce for every 
$l=1,\dots,K$
		\begin{align*}
		\begin{split}
		\sum_{k=1}^l{\tau\langle[\boldsymbol{f}]^k_\tau ,u_n^k\rangle_X}=\langle\mathscr{J}_\tau[\boldsymbol{f}],\overline{\boldsymbol{u}}_n^\tau\chi_{\left[0,l\tau\right]}\rangle_{L^p(I,X)}\leq c(\varepsilon)\|\boldsymbol{f}\|_{L^{p'}(I,X^*)}^{p'}+\varepsilon\int _0^{l\tau}{\|\overline{\boldsymbol{u}}^\tau_n(s)\|_X^p\,ds}.
		\end{split}
		\end{align*}
		In addition, using Proposition \ref{4.6} (i.b), we obtain for every $l=1,\dots,K$
		\begin{align}
		\sum_{k=1}^l{\tau\langle[A]^k_\tau u_n^k,u_n^k\rangle_X}\ge
		c_0\int_0^{l\tau}{\|\overline{\boldsymbol{u}}^\tau_n(s)\|_X^p\,ds}-\tau 
c_1\|ju_n^l\|_Y^2-\sum_{k=1}^{l-1}{\tau c_1\|ju_n^k\|_Y^2}-c_2T.\label{eq:5.10}
		\end{align}
		We set $\varepsilon:= \frac{c_0}{2}$,
                $\alpha:=\frac{1}{2}\|{u}_0\|_H^2+
                c(\varepsilon)\|\boldsymbol{f}\|_{L^{p'}(I,X^*)}^{p'}+c_2T$,
                $\beta:=4\tau c_1<1$ and
                ${y^k_n:=\frac{1}{4}\|ju^k_n\|_Y^2}$ for $k=1,\dots,K$. 
Thus, we
                infer for every $l=1,\dots,K$ from \eqref{eq:5.8}, \eqref{eq:5.10} that
		\begin{align}
		y^l_n+\frac{c_0}{2}\int_0^{l\tau}{\|\overline{\boldsymbol{u}}^\tau_n(s)\|_X^p\,ds}\leq \alpha+\beta\sum_{k=1}^{l-1}{y^k_n}.\label{eq:5.11}
		\end{align}
		The discrete Gronwall inequality applied on \eqref{eq:5.11} yields
		\begin{align*}
		\frac 14\|\boldsymbol{j}\overline{\boldsymbol{u}}_n^\tau\|^2_{L^\infty(I,Y)}
                 + \frac {c_0}2 \|\overline{\boldsymbol{u}}_n^\tau\|^p_{L^p(I,X)}
                  \le \alpha\exp(K\beta)=\alpha\exp(4Tc_1)=:C_0,	
         \end{align*}
                which proves \eqref{eq:5.4}. From  the boundedness of $\mathbfcal{A}:L^p(I,X)\cap_{\boldsymbol{j}}L^q(I,Y)\to (L^p(I,X)\cap_{\boldsymbol{j}}L^q(I,Y))^*$ (cf.~Proposition \ref{3.9}) and \eqref{eq:5.4} we infer $\|\mathbfcal{A}\overline{\boldsymbol{u}}_n^\tau\|_{(L^p(I,X)\cap_{\boldsymbol{j}}L^q(I,Y))^*}\leq C_1$ for some $C_1>0$, i.e., \eqref{eq:5.6}.  In addition, it holds ${\|\boldsymbol{j}\hat{\boldsymbol{u}}_n^\tau\|^2_{L^\infty(I,Y)}\leq \|\boldsymbol{j}\overline{\boldsymbol{u}}_n^\tau\|^2_{L^\infty(I,Y)}\leq 4C_0}$ for every $n\in \mathbb{N}$ and $\tau\in(0,\tau_0)$, i.e., \eqref{eq:5.5}. Moreover, since $e_n\big(\hat{\boldsymbol{u}}_n^\tau(t)-\overline{\boldsymbol{u}}_n^\tau(t)\big)=(t-k\tau) d_\tau e_n \hat{\boldsymbol{u}}_n^\tau(t)=(t-k\tau)\frac{d_{e_n}\hat{\boldsymbol{u}}_n^\tau}{dt}(t)$ in $V_n^*$ and $\vert t-k\tau\vert \leq \tau$ for every $t\in I_k^\tau$, $k=1,\dots,K$, $K,n\in\mathbb{N}$, there holds for every $n\in\mathbb{N}$ and $\tau\in(0,\tau_0)$
		\begin{align*}
		\begin{split}
		\big\|e_n\big(\hat{\boldsymbol{u}}_n^\tau-\overline{\boldsymbol{u}}_n^\tau\big)\big\|_{L^{q'}(I,V_n^*)}&\leq\tau\left\|\frac{d_{e_n}\hat{\boldsymbol{u}}_n^\tau}{dt}\right\|_{L^{q'}(I,V_n^*)}\\&= \tau\big\|(\textup{id}_{L^q(I,V_n)})^*\big(\mathscr{J}_\tau[\boldsymbol{f}]-\!\mathscr{J}_\tau[\mathbfcal{A}]\overline{\boldsymbol{u}}_n^\tau\big)\big\|_{L^{q'}(I,V_n^*)}\leq \tau\big(\|\boldsymbol{f}\|_{L^{p'}(I,X^*)}\!+C_1\big),
		\end{split}
		\end{align*}
		i.e., the estimate \eqref{eq:5.7}, where we used Proposition \ref{4.4} (ii) and Proposition \ref{4.6} (iii.c). \hfill$\qed$
	\end{proof}

We can now prove the abstract convergence result, which is the
main result of this paper. 
	\begin{thm}
\label{5.17}
		Let Assumption \eqref{asum} be satisfied~and~set ${\tau_0\!:=\!\frac{1}{4c_1}}$.
		If $(\overline{\boldsymbol{u}}_n)_{n\in
                  \mathbb{N}}:=(\overline{\boldsymbol{u}}^{\tau_n}_{m_n})_{n\in
                  \mathbb{N}}\subseteq L^\infty(I,X)$, where
                $\tau_n=\frac{T}{K_n}$ and $K_n,m_n\to\infty$  $(n\to\infty)$, is
                an arbitrary diagonal sequence of  piece-wise
                constant interpolants
                $\overline{\boldsymbol{u}}_n^\tau\in
                \mathbfcal{S}^0(\mathcal{I}_{\tau},X)$, $K,n\in
                \mathbb{N}$ with $\tau=\frac TK\in \left(0,\tau_0\right)$, from
                Proposition~\ref{apriori}. Then, there exists a not relabelled subsequence and a weak limit  $\overline{\boldsymbol{u}}\in L^p(I,V)\cap_{\boldsymbol{j}} L^\infty(I,H)$ such that
		\begin{align*}
		\begin{alignedat}{2}
			\overline{\boldsymbol{u}}_n\;\;&\rightharpoonup\;\;\overline{\boldsymbol{u}}&&\quad\text{ in }L^p(I,X),\\
			\overline{\boldsymbol{u}}_n\;\;&\overset{\ast}{\rightharpoondown}\;\;\overline{\boldsymbol{u}}&&\quad\text{ in }L^\infty(I,Y),
			\end{alignedat}\begin{aligned}
			\qquad (n\to \infty).
			\end{aligned}
		\end{align*}
		Furthermore, it follows that
                $\overline{\boldsymbol{u}}\in
                \mathbfcal{W}^{1,p,q}_e(I,V,H)$ is a weak solution of the
                initial value~problem~\eqref{eq:1.1}.
	\end{thm}

	\begin{proof} We split the proof into four steps:\\[-3mm]
	
	\textbf{1. Convergences:} From the estimates \eqref{eq:5.4}--\eqref{eq:5.7}, the reflexivity of ${L^p(I,X)\cap_{\boldsymbol{j}} L^q(I,Y)}$, also using Proposition \ref{4.6} (iii.b), we obtain not relabelled
	subsequences $(\overline{\boldsymbol{u}}_n)_{n\in \mathbb{N}},(\hat{\boldsymbol{u}}_n)_{n\in \mathbb{N}}\subseteq
	L^p(I,X)\cap_{\boldsymbol{j}} L^\infty(I,Y)$, where  $\hat{\boldsymbol{u}}_n:=\hat{\boldsymbol{u}}_{m_n}^{\tau_n}$ for all $n\in \mathbb{N}$, as well as $\overline{\boldsymbol{u}}\in L^p(I,X)\cap_{\boldsymbol{j}} L^\infty(I,Y)$, $\boldsymbol{j}\hat{\boldsymbol{u}}\in L^\infty(I,Y)$ and
	${{\boldsymbol{\chi}}\in (L^p(I,X)\cap_{\boldsymbol{j}} L^q(I,Y))^*}$ such that
	\begin{align}
	\begin{alignedat}{2}
	\overline{\boldsymbol{u}}_n&\;\;\rightharpoonup\;\;\overline{\boldsymbol{u}}&\quad
	&\text{ in }L^p(I,X)\;\quad(n\to\infty),\\
	\boldsymbol{j}\overline{\boldsymbol{u}}_n&\;\;\overset{\ast}{\rightharpoondown}\;\;\boldsymbol{j}\overline{\boldsymbol{u}}&&\text{
		in }L^\infty(I,Y)\quad (n\rightarrow\infty),\\
	\boldsymbol{j}\hat{\boldsymbol{u}}_n&\;\; \overset{\ast}{\rightharpoondown}\;\;\boldsymbol{j}\hat{\boldsymbol{u}}&&\text{
		in }L^\infty(I,Y)\quad (n\rightarrow\infty),\\
	\mathscr{J}_{\tau_n}[\mathbfcal{A}]\overline{\boldsymbol{u}}_n&\;\;\rightharpoonup\;\;{\boldsymbol{\chi}}&&\text{
		in }(L^p(I,X)\cap_{\boldsymbol{j}} L^q(I,Y))^*\quad(n\to \infty),
              \\
              \mathbfcal{A}\overline{\boldsymbol{u}}_n&\;\;\rightharpoonup\;\;{\boldsymbol{\chi}}&&\text{
		in }(L^p(I,X)\cap_{\boldsymbol{j}} L^q(I,Y))^*\quad(n\to \infty).
	\end{alignedat}\label{eq:5.18}
	\end{align}
	From (\hyperlink{QNC.2}{QNC.2}) we immediately obtain that $\overline{\boldsymbol{u}}\in L^p(I,V)\cap_{\boldsymbol{j}} L^\infty(I,H)$.
	In particular, there exists $\boldsymbol{g}\in L^{p'}(I,V^*)+\boldsymbol{j}^*(L^{q'}(I,H^*))$ (cf.~\eqref{eq:dual}), such that for every ${\boldsymbol{v}\in L^p(I,V)\cap_{\boldsymbol{j}} L^q(I,H)}$ 
	\begin{align}
	\langle {\boldsymbol{\chi}},\boldsymbol{v}\rangle_{L^p(I,X)\cap_{\boldsymbol{j}} L^q(I,Y)}=\int_I{\langle \boldsymbol{g}(t),\boldsymbol{v}(t)\rangle_V\,dt}.\label{eq:rep}
	\end{align}
	 Due to $\eqref{eq:5.7}$ there exists a subset $E\subset I$, with $I\setminus E$ a null set, such that for every $t\in E$
	\begin{align}
		\|e_{m_n}(\hat{\boldsymbol{u}}_n(t)-\overline{\boldsymbol{u}}_n(t))\|_{V_{m_n}^*}\;\;\to \;\;0\quad(n\to\infty).\label{eq:5.19}
	\end{align}
	Owing to (\hyperlink{QNC.1}{QNC.1})  we can choose
        for every element $v$ of the dense subset $C\subseteq V$ a
        sequence $v_n\in V_{m_n}$,~${n\in\mathbb{N}}$, such that
        $v_n\to v$ in $X$ $(n\to\infty)$. Then, using the definition
        of $P_H$, \eqref{eq:iden}, \eqref{eq:5.4}, \eqref{eq:5.5} and \eqref{eq:5.19}, we infer for~every~${t\in E}$~that
	\begin{align}
	\begin{split}
		\vert(P_H[(\boldsymbol{j}&\hat{\boldsymbol{u}}_n)(t)-(\boldsymbol{j}\overline{\boldsymbol{u}}_n)(t)],jv)_H\vert= \vert((\boldsymbol{j}\hat{\boldsymbol{u}}_n)(t)-(\boldsymbol{j}\overline{\boldsymbol{u}}_n)(t),jv)_Y\vert \\&\leq \vert\langle e_{m_n}[(\boldsymbol{j}\hat{\boldsymbol{u}}_n)(t)-(\boldsymbol{j}\overline{\boldsymbol{u}}_n)(t)],v_n\rangle_{V_{m_n}}\vert+ \vert\left((\boldsymbol{j}\hat{\boldsymbol{u}}_n)(t)-(\boldsymbol{j}\overline{\boldsymbol{u}}_n)(t),jv-jv_n\right)_Y\vert
		\\&\leq 	\|e_{m_n}[\hat{\boldsymbol{u}}_n(t)-\overline{\boldsymbol{u}}_n(t)]\|_{V_{m_n}^*}\|v_n\|_X+ \|(\boldsymbol{j}\hat{\boldsymbol{u}}_n)(t)-(\boldsymbol{j}\overline{\boldsymbol{u}}_n)(t)\|_Y\|jv-jv_n\|_Y
		\\&\leq \|e_{m_n}[\hat{\boldsymbol{u}}_n(t)-\overline{\boldsymbol{u}}_n(t)]\|_{V_{m_n}^*}\|v_n\|_X+ 2M\|jv-jv_n\|_Y\;\;\to\;\; 0\quad(n\to\infty).
	\end{split}\label{eq:5.20}
	\end{align}
Since $C$ is dense in $V$ and $j(V)$ is dense in $H$, we conclude from \eqref{eq:5.20} for every $t\in E$ that
	\begin{align}
		P_H[(\boldsymbol{j}\hat{\boldsymbol{u}}_n)(t)-(\boldsymbol{j}\overline{\boldsymbol{u}}_n)(t)]\;\;\rightharpoonup \;\;0\quad\text{ in }H\quad(n\to\infty).\label{eq:5.21}
	\end{align}
Since the sequences
$(P_H\boldsymbol{j}\overline{\boldsymbol{u}}_n)_{n\in\mathbb{N}},(P_H\boldsymbol{j}\hat{\boldsymbol{u}})_{n\in\mathbb{N}}\subseteq
L^\infty(I,H)$ are bounded (cf. \eqref{eq:5.4} and \eqref{eq:5.5}), 
\cite[Proposition~2.15]{alex-rose-hirano} yields, due to \eqref{eq:5.21}, that
$P_H(\boldsymbol{j}\hat{\boldsymbol{u}}_n-\boldsymbol{j}\overline{\boldsymbol{u}}_n)\rightharpoonup
\boldsymbol 0$ in $L^q(I,H)$  $(n \to \infty)$. 
From \eqref{eq:5.18}$_{2,3}$ we easily deduce that
$P_H(\boldsymbol{j}\hat{\boldsymbol{u}}_n-\boldsymbol{j}\overline{\boldsymbol{u}}_n)\rightharpoonup
P_H(\boldsymbol{j}\hat{\boldsymbol{u}}-\boldsymbol{j}\overline{\boldsymbol{u}})$ in $L^q(I,H)$  $(n \to \infty)$.
Thus,
$P_H(\boldsymbol{j}\hat{\boldsymbol{u}})=P_H(\boldsymbol{j}\overline{\boldsymbol{u}})=\boldsymbol{j}\overline{\boldsymbol{u}}$
in $L^\infty (I,H)$, where we used that  $\overline{\boldsymbol{u}}\in L^p(I,V)\cap_{\boldsymbol{j}} L^\infty(I,H)$.\\[-3mm]

	\textbf{2. Regularity and trace of the weak limit:}
	\hypertarget{3.2}{} 
	Let $v\in C$ and  $v_n\in V_{m_n}$, $n\in\mathbb{N}$, be a sequence such 
that $v_n\to v$ in $X$ $(n\to\infty)$. Testing
	\eqref{eq:4.15} for $n\in \mathbb{N}$ by
	$v_n\in V_{m_n}$, multiplication by $\varphi\in C^\infty(\overline{I})$ with $\varphi(T)=0$, integration over $I$, and integration by parts yields for every $n\in \mathbb{N}$
	\begin{align}
	\begin{split}
	\langle \mathscr{J}_{\tau_n}[\mathbfcal{A}]\overline{\boldsymbol{u}}_n,v_n\varphi\rangle_{L^p(I,X)\cap_{\boldsymbol{j}}L^q(I,Y)}&-\int_{I}{\langle\mathscr{J}_{\tau_n}[\boldsymbol{f}](s),v_n\rangle_{X}\varphi(s)\,ds} \\&=\int_I{((\boldsymbol{j}\hat{\boldsymbol{u}}_n)(s),jv_n)_Y\varphi^\prime(s)\,ds}
	+(u_{m_n}^0,jv_n)_Y\varphi(0).
	\end{split}\label{eq:5.22}
	\end{align}
	By passing in \eqref{eq:5.22} for $n\to \infty$, using
        \eqref{eq:5.18}, \eqref{eq:rep}, Proposition \ref{4.4} (i), $P_H(\boldsymbol{j}\hat{\boldsymbol{u}})=\boldsymbol{j}\overline{\boldsymbol{u}}$ in $L^{\infty}(I,H)$,
	$u_{m_n}^0\to {u}_0$ in
	$Y$ $(n\to\infty)$ and the density of $C$ in $V$, we obtain
        that for all $v\in V$ and
	$\varphi\in C^\infty(\overline{I})$ with $\varphi(T)=0$ there holds
	\begin{align}
	\begin{split}
	\int_{I}{\langle\boldsymbol{g}(s)-\boldsymbol{f}(s),v\rangle_V\varphi(s)\,ds}
        &=\int_I{((\boldsymbol{j}\hat{\boldsymbol{u}})(s),jv)_Y\varphi^\prime(s)\,ds}
	+({u}_0,jv)_Y\varphi(0)
	\\&=\int_I{((\boldsymbol{j}\overline{\boldsymbol{u}})(s),jv)_H\varphi^\prime(s)\,ds}
	+({u}_0,jv)_H\varphi(0).
	\end{split}\label{eq:5.23} 
	\end{align}
	In the case
	$\varphi\in C_0^\infty(I)$ in \eqref{eq:5.23}, recalling Definition \ref{2.15}, we conclude that $\overline{\boldsymbol{u}}\in \mathbfcal{W}_e^{1,p,q}(I,V,H)$ with continuous representation $\boldsymbol{j}_c\overline{\boldsymbol{u}}\in C^0(\overline{I},H)$ and 
	\begin{align}
	\frac{d_e\overline{\boldsymbol{u}}}{dt}=\boldsymbol{f}-\boldsymbol{g}\quad\text{ in }
	L^{p'}(I,V^*)+ \boldsymbol{j}^*(L^{q'}(I,H^*)).\label{eq:5.24}
	\end{align}
	Thus, we are able to apply the generalized integration by parts formula in $\mathbfcal{W}_e^{1,p,q}(I,V,H)$ (cf.~Proposition~\ref{2.16}) in
	\eqref{eq:5.23} in the case $\varphi\in C^\infty(\overline{I})$ with
	$\varphi(T)=0$ and $\varphi(0)=1$, which yields for all $v\in
	V$ 
	\begin{align*}
	((\boldsymbol{j}_c\overline{\boldsymbol{u}})(0)-{u}_0,jv)_H=0. 
	\end{align*}
	As $j(V)$ is dense in $H$ and $(\boldsymbol{j}_c\overline{\boldsymbol{u}})(0)\in H$, we deduce from \eqref{eq:5.24} that $(\boldsymbol{j}_c\overline{\boldsymbol{u}})(0)={u}_0$ in $H$. \\[-3mm]

	\textbf{3. Pointwise weak convergence:}
	Next, we show that
	$ P_H(\boldsymbol{j}\hat{\boldsymbol{u}}_n)(t)\rightharpoonup (\boldsymbol{j}_c
	\overline{\boldsymbol{u}})(t)$ in $H$ ${(n\to\infty)}$ for all $t \in \overline{I}$, which due to \eqref{eq:5.21} in turn yields
	that $ P_H(\boldsymbol{j}\overline{\boldsymbol{u}}_n)(t)
	\rightharpoonup (\boldsymbol j
	\overline{\boldsymbol{u}})(t)$ in $H$ $(n\to\infty)$ for almost every $t 
\in \overline{I}$. To this end, let us fix an arbitrary
	$t\in I$. From the a-priori estimate
	$\|(\boldsymbol{j}\hat{\boldsymbol{u}}_n)(t)\|_{Y}\leq M$ for all
	$t\in \overline{I}$ and $n\in\mathbb{N}$ (cf.~\eqref{eq:5.5}) we
	obtain a subsequence
	$((\boldsymbol{j}\hat{\boldsymbol{u}}_n)(t))_{n\in\Lambda_t}\subseteq
	Y$ with $\Lambda_t\subseteq\mathbb{N}$, initially
	depending on this fixed $t$, and an element
	$\hat{\boldsymbol{u}}_{\Lambda_t}\in Y$ such that
	\begin{align}
	(\boldsymbol{j}\hat{\boldsymbol{u}}_n)(t)
	\;\;\rightharpoonup\;\;
	\hat{\boldsymbol{u}}_{\Lambda_t}\quad\text{ in }Y\quad(
	\Lambda_t\ni n\to \infty).\label{eq:5.27}  
	\end{align}
	Let $v\in C$ and $v_n\in V_{m_n}$, $n\in\mathbb{N}$, be such that $v_n\to v$ in $X$ $(n\to\infty)$. Then, we test \eqref{eq:4.15} for $n\in \Lambda_t$ by
	$v_n\in V_{m_n}$, multiply by $\varphi\in C^\infty(\overline{I})$ with $\varphi(0)=0$ and
	$\varphi(t)=1$, integrate over $\left[0,t\right]$ and integrate by parts, to obtain for all $n\in \Lambda_t$ 
	\begin{align}
          \begin{split}
            &\big \langle
            \mathscr{J}_{\tau_n}[\mathbfcal{A}]\overline{\boldsymbol{u}}_n,v_n\varphi\chi_{\left[0,t\right]}
            \big \rangle_{L^p(I,X)\cap_{\boldsymbol{j}}L^q(I,Y)}-\int_0^t{\big \langle\mathscr{J}_{\tau_n}[\boldsymbol{f}](s),v_n \big \rangle_{X}\varphi(s)\,ds} \label{eq:5.28}
            \\
            &=\int_0^t{((\boldsymbol{j}\hat{\boldsymbol{u}}_n)(s),jv_n)_Y\varphi^\prime(s)\,ds}
            -((\boldsymbol{j}\hat{\boldsymbol{u}}_n)(t),jv_n)_Y.
          \end{split}
	\end{align}
	By passing in \eqref{eq:5.28} for  $n\in \Lambda_t$ to
	infinity, using \eqref{eq:5.18}, \eqref{eq:rep}, Proposition \ref{4.4} (i), \eqref{eq:5.27} and the density of $C$ in $V$,~we~obtain for all $v\in V$
	\begin{align}
	\begin{split}	\int_0^t{\langle\boldsymbol{g}(s)-\boldsymbol{f}(s),v\rangle_V\varphi(s)\,ds}
=\int_0^t{((\boldsymbol{j}\overline{\boldsymbol{u}})(s),jv)_H\varphi^\prime(s)\,ds}
-(\hat{\boldsymbol{u}}_{\Lambda_t},jv)_Y.
\end{split} \label{eq:5.29b}
	\end{align}
	From \eqref{eq:5.24},
	\eqref{eq:5.29b}, the integration by parts formula in
        $\mathbfcal{W}_e^{1,p,q}(I,V,H)$ and the properties of $P_H$ we 
	obtain 
	\begin{align}
	0=((\boldsymbol{j}_c\overline{\boldsymbol{u}})(t)-
	\hat{\boldsymbol{u}}_{\Lambda_t},jv)_Y=((\boldsymbol{j}_c\overline{\boldsymbol{u}})(t)-
	P_H\hat{\boldsymbol{u}}_{\Lambda_t},jv)_H,\label{eq:5.29} 
	\end{align}
	for all $v\in V$. Thanks to the density of $j(V)$ in $H$, \eqref{eq:5.29} yields
	$(\boldsymbol{j}_c\overline{\boldsymbol{u}})(t)=P_H\hat{\boldsymbol{u}}_{\Lambda_t}$
	in $H$, i.e.,
	\begin{align}
	P_H(\boldsymbol{j}\hat{\boldsymbol{u}}_n)(t)\;\;
	\rightharpoonup\;\;(\boldsymbol{j}_c\overline{\boldsymbol{u}})(t)\quad\text{
		in }H\quad(\Lambda_t\ni n \to \infty).\label{eq:5.30} 
	\end{align}
	As this argumentation remains valid for each subsequence of
	${(P_H(\boldsymbol{j}\hat{\boldsymbol{u}}_n)(t) )_{n\in\mathbb{N}}\subseteq H}$,
	the element ${(\boldsymbol{j}_c\overline{\boldsymbol{u}})(t)\in H}$ is a weak accumulation point of
	each subsequence of
	$(P_H(\boldsymbol{j}\hat{\boldsymbol{u}}_n)(t) )_{n\in\mathbb{N}}\subseteq
	H$. The standard convergence principle (cf.~\cite[Kap. I, Lemma 5.4]{GGZ}) yields
	$\Lambda_t=\mathbb{N}$ in \eqref{eq:5.30}. Therefore, using \eqref{eq:5.21} and that $(\boldsymbol{j}_c\overline{\boldsymbol{u}})(t)=(\boldsymbol{j}\boldsymbol{u})(t)$ in $H$ for almost every $t\in I$, there holds for almost every $t\in I$ 
	\begin{align}\label{eq:5.31}
	P_H(\boldsymbol{j}\overline{\boldsymbol{u}}_n)(t)\;\;\rightharpoonup\;\;(\boldsymbol{j}\overline{\boldsymbol{u}})(t)\quad\text{
		in }H\quad(n\to\infty).\\[-3mm]\notag
	\end{align}
	
	\textbf{4. Identification of
          $\mathbfcal{A}\overline{\boldsymbol{u}}$ and
          ${\boldsymbol{\chi}}$}: Inequality \eqref{eq:5.8} in
        the case $\tau=\tau_n$, $n=m_n$ and $l=K_n$, using Proposition
        \ref{4.6} (iii.a),
        $(\boldsymbol{j}_c\overline{\boldsymbol{u}})(0)\!=\!{u}_0$ in
        $H$,
        ${\|P_H(\boldsymbol{j}\hat{\boldsymbol{u}}_n)(T)\|_H\!\leq\!\|(\boldsymbol{j}\hat{\boldsymbol{u}}_n)(T)\|_Y\!=\!\|(\boldsymbol{j}\overline{\boldsymbol{u}}_n)(T)\|_Y}$
        and
        $\langle\mathscr{J}_{\tau_n}[\boldsymbol{f}],\overline{\boldsymbol{u}}_n\rangle_{L^p(I,X)}=\langle\boldsymbol{f},\overline{\boldsymbol{u}}_n\rangle_{L^p(I,X)}$
        for all $n\in \mathbb{N}$, yields for all $n\in \mathbb{N}$
	\begin{align}
	\langle
	\mathbfcal{A}\overline{\boldsymbol{u}}_n,\overline{\boldsymbol{u}}_n\rangle_{L^p(I,X)\cap_{\boldsymbol{j}}L^q(I,Y)}\leq 
	-\frac{1}{2}\|P_H(\boldsymbol{j}\hat{\boldsymbol{u}}_n)(T)\|_H^2+\frac{1}{2}\|(\boldsymbol{j}_c\overline{\boldsymbol{u}})(0)\|_H^2+\langle\boldsymbol{f},\overline{\boldsymbol{u}}_n\rangle_{L^p(I,X)}.\label{eq:5.32}
	\end{align}
	Thus, the limit superior with respect to $n\in\mathbb{N}$ on both sides in \eqref{eq:5.32},
	\eqref{eq:5.18}, \eqref{eq:rep}, \eqref{eq:5.30} with
	$\Lambda_t=\mathbb{N}$ in the case $t=T$, the weak lower
	semi-continuity of $\|\cdot\|_H$, the integration by parts
	formula in $\mathbfcal{W}_e^{1,p,q}(I,V,H)$ and \eqref{eq:5.24} yield
	\begin{align}\begin{split}
	\limsup_{n\rightarrow\infty}{ \langle
		\mathbfcal{A}\overline{\boldsymbol{u}}_n,\overline{\boldsymbol{u}}_n-\overline{\boldsymbol{u}}\rangle_{L^p(I,X)\cap_{\boldsymbol{j}}L^q(I,Y)}}
	&\leq
	-\frac{1}{2}\|(\boldsymbol{j}_c\overline{\boldsymbol{u}})(T)\|_H^2+\frac{1}{2}\|(\boldsymbol{j}_c\overline{\boldsymbol{u}})(0)\|_H^2
        \\
        &\qquad+
	\int_I{\langle\boldsymbol{f}(t)-\boldsymbol{g}(t),\overline{\boldsymbol{u}}(t)\rangle_V\,dt}\\&=-\int_I{\Big\langle
	\frac{d_e\overline{\boldsymbol{u}}}{dt}(t)+\boldsymbol{f}(s)-\boldsymbol{g}(s),\overline{\boldsymbol{u}}(t)\Big\rangle_V\,dt}=0.
	\end{split}\label{eq:5.33}
	\end{align}
	As a result of \eqref{eq:5.18}, \eqref{eq:5.31},
	\eqref{eq:5.33} and the quasi non-conforming Bochner pseudo-monotonicity 
of
	$\mathbfcal{A}:L^p(I,X)\cap_{\boldsymbol{j}}L^q(I,Y)\rightarrow
	({L^p(I,X)\cap_{\boldsymbol{j}}L^q(I,Y)})^*$ (cf.~Proposition~\ref{3.9}), there holds 
	\begin{align*}
		\langle \mathbfcal{A}\overline{\boldsymbol{u}},\overline{\boldsymbol{u}}-\boldsymbol{v}\rangle_{L^p(I,X)\cap_{\boldsymbol{j}}L^q(I,Y)}&\leq\liminf_{n\to\infty}{\langle \mathbfcal{A}\overline{\boldsymbol{u}}_n,\overline{\boldsymbol{u}}_n-\boldsymbol{v}\rangle_{{L^p(I,X)\cap_{\boldsymbol{j}}L^q(I,Y)}}}\\&\leq\langle{\boldsymbol{\chi}},\overline{\boldsymbol{u}}-\boldsymbol{v}\rangle_{{L^p(I,X)\cap_{\boldsymbol{j}}L^q(I,Y)}},
	\end{align*}
	for any $\boldsymbol{v}\in {L^p(I,X)\cap_{\boldsymbol{j}}L^q(I,Y)}$, which in turn implies that $\mathbfcal{A}\overline{\boldsymbol{u}}={\boldsymbol{\chi}}$ in $({L^p(I,X)\cap_{\boldsymbol{j}}L^q(I,Y)})^*$. 
	This completes the proof of Theorem \ref{5.17}.\hfill$\qed$
	\end{proof}

	\section{Applications} 
        \label{sec:7}

        In this section, we apply the abstract theory developed in the
        previous sections to two problems stemming from incompressible
        non-Newtonian fluid flows. In particular, we treat the motion
        of micropolar electrorheological fluids and a variant of the
        Smagorinsky model in turbulence. In both cases we show that
        solutions of a fully discrete implicit Rothe--Galerkin scheme
        converge to a weak solution of the corresponding
        problem. These results are new and, to the best of the
        authors' knowledge, can not be found in the literature. We restrict
        our treatment to the three-dimensional case. All
        results have an analogue in the two-dimensional setting (for a
        formulation of \eqref{eq:erf} in two dimensions
        see~\eqref{eq:erf-2d}  in Section~\ref{sec:8}).

In this section we always assume that we have given a family of shape regular triangulations
(cf.~\cite{BS08}) $(\mathcal{T}_h)_{h>0}$ of a bounded polygonal Lipschitz
domain $\Omega \subset \mathbb R^3$, and that $I=(0,T)$ is a finite time interval.

In order to formulate the problems we need some additional notation.
We denote by $\levy$ the
  anti-symmetric Levi--Civita symbol. 
  For a tensor $P$ and a vector $\omega$, resp., we denote by
  $\levy : P$ the vector having the components
  $\varepsilon_{ijk}P_{jk}$, $i=1,2,3$ and by $\levy\cdot \omega $ the
  tensor with components $\varepsilon_{ijk}\omega_k$, $i,j=1,2,3$,
  respectively.  In both cases the summation convention over repeated
  indices is used. For a tensor $P$ and a vector $\omega$ the symbol
  ${P} \,\omega \in \mathbb{R}^3$ denotes the matrix-vector product, i.e.,
  $({P}\, \omega)_i={{P}_{ij} \omega_j}$ for $i=1,2,3$.
  $\mathbb{M}^{3\times 3}_{\text{sym}}$ is the
  vector space of all symmetric $3\times 3$ tensors ${P} $ and
  $\mathbb{M}^{3\times 3}_{\text{skew}}$ is the
  vector space of all skew-symmetric $3\times 3$ tensors ${P} $.
  We equip the vector space $\mathbb{M}^{3\times 3}$ of all $3\times 3$ tensors ${P} $ with the scalar product
  ${P}:{Q}:={P_{ij}Q_{ij}}$ and the norm
  $\vert{P}\vert:=({P}:{P})^{\frac{1}{2}}$.
  Moreover, we denote the symmetric and the
  skew-symmetric part, resp., of a tensor ${P} \in
  \mathbb{M}^{3\times 3}$ by
  ${{P}^{\sym}:=\frac{1}{2}({P}+{P}^\top)}$ and
  ${P}^{\anti}:=\frac{1}{2}({P} -{P}^\top)$,
  respectively. The particular case of the 
  skew-symmetric part of the velocity gradient is denoted by
  ${W}u:=\frac{1}{2}(\nabla u -\nabla u^\top)$.

\subsection{Micropolar electrorheological fluids}
We consider the following system describing the motion of micropolar
electrorheological fluids 
\begin{align}
  \begin{split}
    \begin{alignedat}{2}
      \partial_t \boldsymbol{u}-\divo \bS+\text{div}(\boldsymbol{u}\otimes
      \boldsymbol{u}) +\nabla\boldsymbol{q}&=\boldsymbol{f}&&\quad\text{ in }I\times\Omega,
      \\
      \divo \boldsymbol{u}&=0&&\quad\text{ in }I\times\Omega,
      \\
      \partial_t \boldsymbol \omega -\divo \bN +\text{div}(\boldsymbol{\omega}\otimes
      \boldsymbol{u}) &=\boldsymbol{\ell} - \levy:\bS &&\quad\text{ in }I\times\Omega,
      \\
      \boldsymbol{u}=\mathbf{0},\qquad \boldsymbol{\omega}&=\boldsymbol{0}&&\quad\text{ on }I\times \partial\Omega,
      \\
  \boldsymbol{u}(0)={u}_0,\qquad           \boldsymbol{\omega}(0)&={\omega}_0&&\quad\text{ in }\Omega.
    \end{alignedat}
  \end{split}\label{eq:erf}
\end{align}
In these equations $\boldsymbol u$ denotes the velocity, $\boldsymbol \omega$ the
micro-rotation, $\boldsymbol{q}$ the pressure, $\bS$ the mechanical extra stress
tensor, $\bN$ the couple stress tensor, $\boldsymbol \ell$ the electromagnetic
couple force, $\boldsymbol f=\,\hat {\!\boldsymbol {f}} + \chi^E\divo
(\boldsymbol E \otimes \boldsymbol E)$ the
body force, where $\hat {\!\boldsymbol {f}}$ is the mechanical body force, $\chi^E$
the dielectric susceptibility and $\boldsymbol E$ the electric
field. The electric field solves the quasi-static Maxwell's equations
\begin{gather}
   \begin{gathered}\label{maxwell}
     \divo \boldsymbol{E}=0 \qquad    \curl \boldsymbol {E}=\mathbf 0 
\qquad \text{in}\
     I\times \Omega\,,
     \\
     \boldsymbol {E}\cdot \boldsymbol{n}=\boldsymbol{E}_0\cdot \boldsymbol{n}
     \qquad\text{on}\ I \times \partial\Omega,
   \end{gathered}
\end{gather}
where $\boldsymbol{n}$ is the outer unit normal vector of the boundary
$\partial \Omega$ and $\boldsymbol{E}_0$ is a given time-dependent
electric field. 

This model was developed in \cite{win-r} to obtain a more realistic
description for the motion of electrorheological fluids.  A
representative example for a constitutive relation for the stress
tensors in \eqref{eq:erf} reads (cf.~\cite{win-r},
\cite{rubo}) 
\begin{align}
  \hspace*{-1mm}  
  \begin{aligned}\label{eq:SN-ex}
    \textrm{S}&=(\alpha_{31}+\alpha_{33}\vert {E}\vert^2)
    (1+\vert{D}\vert)^{p-2}{D}+ \alpha_{51}
    (1+\vert{D}\vert)^{p-2}\big ({D} \bE \otimes \bE +
    \bE \otimes D\bE \big)\hspace*{-5mm}
    \\
    &\quad +
    \alpha_{71}\vert{E}\vert^2(1+\vert{R}\vert)^{p-2}
    {R} + \alpha_{91} (1+\vert{R}\vert)^{p-2}\big
    ({R} \bE \otimes \bE + \bE \otimes R\bE \big)\,,
    \\
    {N}&=(\beta_{31}+\beta_{33}\vert{E}\vert^2)
    (1+\vert L\vert)^{p-2}L
    + \beta_{51}(1+\vert L\vert)^{p-2}\big
    (L
    \bE \otimes \bE + \bE \otimes L\bE \big)\,,
  \end{aligned}
\end{align}
with {$p \in (1,\infty)$} and constants
$\alpha_{31},\alpha_{33},\alpha_{71},\beta_{31},\beta_{33}\ge 0$.  The
constants $\alpha_{51}, \alpha_{91}, \beta_{51}$ have to satisfy
certain restrictions (cf.~\cite{win-r}, \cite{rubo}), which ensure the
validity of the second law of thermodynamics. In \eqref{eq:SN-ex} we
used the notation 
$D = \bD u $, $R=R( u, \omega):= W u +\levy \cdot \omega $ and
$L =\nabla \omega$.

This model for micropolar electrorheological fluids is rather general
and contains as special cases the models for generalized Newtonian fluids 
($\bE
=0$, $\alpha_{31}=0$), electrorheological fluids with constant
exponents
($\alpha_{71}=\alpha_{91}=\beta_{31}=\beta_{33}=\beta_{51}=0$) and
micropolar fluids ($\bE =\textrm{const.}$,
$\alpha_{51}=\alpha_{91}=\beta_{33}=\beta_{51}=0$). Consequently the
results presented in this section also apply to these models either
directly or by an easy adaptation.
  
We refrain from considering concrete constitutive relations for the
stress tensors, but we make general assumptions covering prototypical
situations: 

The continuous mapping
$\textrm{S}:\mathbb{M}^{3\times
  3}_{\text{sym}}\times\mathbb{M}^{3\times 3}_{\text{skew}}\times
\setR^3 \to \mathbb{M}^{3\times 3}$ satisfies for some
$p\in \left(1,\infty\right)$ and $\kappa \ge 0$ and all 
$D,{P}\in \mathbb{M}^{3\times 3}_{\text{sym}}$,
$R,{Q}\in \mathbb{M}^{3\times 3}_{\text{skew}}$,
$E \in \setR^3$ the following properties:
\begin{description}[{\textbf{(S.3)})}]
\item[\textbf{(S.1)}]\hypertarget{S.1}{}$\vert\textrm{S}^\sym ({D},\bR,\bE)\vert \leq
  \alpha_0(1+\abs{\bE}^2)\,(\kappa+\vert
  {D}\vert)^{p-2}\vert{D}\vert+\alpha_1$ for
  $(\alpha_0>0,\,\alpha_1\ge 0)$; \newline
  $\vert\textrm{S}^\anti ({D},\bR,\bE)\vert \leq
  \beta _0 \abs{\bE}^2(\kappa+\vert
  {R}\vert)^{p-2}\vert{R}\vert+\beta_1$ for
  $(\beta_0>0,\,\beta_1\ge 0)$.\vspace*{1mm}
\item[\textbf{(S.2)}]\hypertarget{S.2}{}$\textrm{S}({D},\bR,\bE):{D}\ge
  c_0(1+\abs{\bE}^2)\,(\kappa+\vert {D}\vert)^{p-2}\vert{D}\vert^2-c_1$
  for ${(c_0>0,\,c_1\ge 0)}$; \newline
  $\textrm{S}(D,{R},\bE):{R}\ge
  c_2\abs{\bE}^2(\kappa+\vert {R}\vert)^{p-2}\vert{R}\vert^2-c_3$
  for ${(c_2>0,\,c_3\ge 0)}$.\vspace*{1mm}
\item[\textbf{(S.3)}]\hypertarget{S.3}{}
  $(\textrm{S}({D},\bR,\bE)-\textrm{S}({P},  Q,
  \bE)): (D- {P} + \bR-{Q})\ge 0$.
\end{description}
The continuous mapping
$\textrm{N}:\mathbb{M}^{3\times
  3}\times \setR^3 \to \mathbb{M}^{3\times 3}$ satisfies for the same
$p\in \left(1,\infty\right)$ and $\kappa \ge 0$ and all 
${L},  K \in \mathbb{M}^{3\times 3}$,
$\bE \in \setR^3$ the following properties:
\begin{description}[{\textbf{(N.3)})}]
\item[\textbf{(N.1)}]\hypertarget{N.1}{}$\vert\textrm{N}({L},\bE)\vert \leq
  \gamma_0(1+\abs{\bE}^2)\,(\kappa+\vert
  {L}\vert)^{p-2}\vert{L}\vert+\gamma_1$ for
  $(\gamma_0>0,\,\gamma_1\ge 0)$.\vspace*{1mm}
\item[\textbf{(N.2)}]\hypertarget{N.2}{}$\textrm{N}({L},\bE):{L}\ge
  c_3(1+\abs{\bE}^2)\,(\kappa+\vert {L}\vert)^{p-2}\vert{L}\vert^2-c_4$
  for ${(c_3>0,\,c_4\ge 0)}$.\vspace*{1mm}
\item[\textbf{(N.3)}]\hypertarget{N.3}{}
  $(\textrm{N}({L},\bE)-\textrm{N}({K},
  \bE)): ({L} -{K})\ge 0$.
\end{description}
Concerning the electric field $\boldsymbol E$ solving \eqref{maxwell} we assume
that the boundary data $\bE_0$ are regular enough to ensure that
\begin{description}[{\textbf{(E.1)})}]
\item[\textbf{(E.1)}]\hypertarget{E.1}{} $\boldsymbol E$ belongs to the space
  $L^\infty(I,L^\infty(\Omega))$ and a.e.~in $I\times\Omega$ there holds
  $\abs{\boldsymbol E}>0$.
\end{description}

To treat problem \eqref{eq:erf} we define for
$p>\frac{6}5$ the function spaces
\begin{align*}
  \begin{alignedat}{2}
    X&:=W^{1,p}_0(\Omega)^3 \times W^{1,p}_0(\Omega)^3,
    &\qquad Y&:=L^2(\Omega)^3\times L^2(\Omega)^3,
    \\
    V&:=W^{1,p}_{0,\divo }(\Omega)\times W^{1,p}_0(\Omega)^3,
    &H&:=L^2_{\divo }(\Omega)\times L^2(\Omega)^3,
  \end{alignedat}
\end{align*}
and the families of operators ${S(t),N(t),B:X\to X^*}$
for all ${(u,\omega) ^\top}, (v,\eta) ^\top\in X$ via\footnote{To formulate the
  operator $S$ we used that $  \bS:\nabla u+(\levy:\bS)\cdot \omega
  =\bS:\big(\bD u +  \bR(u ,\omega) \big)$.
}
\begin{gather*}
  \begin{aligned}
    \langle S(t)({u},{\omega}) ^\top, (v,\eta) ^\top\rangle_X&:= \int_\Omega \bS(\bD
    u,\bR(u,\omega),\boldsymbol E(t)) :(\bD v + \bR (v,\eta) )\,dx\,,
    \\
    \langle N(t)({u},{\omega}) ^\top, (v,\eta) ^\top \rangle_X&:= \int_\Omega
    \bN(\nabla \omega,\boldsymbol E(t)) :\nabla \eta\,dx\,,
    \\
    \langle B({u},{\omega}) ^\top, (v,\eta) ^\top \rangle_X&:= \int_\Omega
    u \otimes u : \nabla v  + \omega \otimes u : \nabla \eta \,dx\,,
  \end{aligned}
\end{gather*}
and set $A(t):=S(t) +N(t) +B:X\to X^*$, $t \in I$.
Then, \eqref{eq:erf} for $\mathcal U_0:=({u}_0, \omega_0) ^\top\in H$ and
$\mathbfcal F :=(\boldsymbol f, \boldsymbol \ell) ^\top \in L^{p'}(I,X^*)$ can be re-written as the abstract 
evolution equation for $\mathbfcal U:=(\boldsymbol u, \boldsymbol \omega) ^\top$
\begin{align*}
  \begin{split}
    \begin{alignedat}{2}
      \frac{d\,\mathbfcal{U}}{dt}(t)+A(t)(\mathbfcal{U}(t))&=\mathbfcal{F}(t)&&\quad\text{
        in }V^*,
      \\
      \mathbfcal{U}(0)&=\mathcal {U}_0&&\quad\text{ in }H.
    \end{alignedat}
  \end{split}
\end{align*}
In \cite{br-mperf} it is shown that, under appropriate assumption on
the data, there exists a
weak solution of the problem \eqref{eq:erf} for  $p>\frac 65$ provided
$\bS$ satisfies (\hyperlink{S.1}{S.1})--(\hyperlink{S.3}{S.3}), $\bN$
satisfies (\hyperlink{N.1}{N.1})--(\hyperlink{N.3}{N.3}), and $\bE$
satisfies (\hyperlink{E.1}{E.1}). 

Due to the presence of $B$ in the definition of the operator family
$A(t):X\to X^*$, $t \in I$, the condition  (\hyperlink{A.3}{A.3}) is 
not satisfied. Thus, we modify this family and define the
operator family $\hat{A}(t):X\to X^*$ via
$\hat A(t):=S(t) +N(t) +\hat B$, $t \in I$, where $\hat B$ is given 
for all ${(u,\omega) ^\top}, (v,\eta) ^\top\in X$ via
\begin{gather*}
  \langle \hat B({u},{\omega}) ^\top, (v,\eta) ^\top \rangle_X:= \frac 12 \int_\Omega v
  \otimes u : \nabla u - {{u}\otimes
  {u}:\nabla {v}} +{{\eta}\otimes
  {u}:\nabla {\omega}}- \omega \otimes u : \nabla \eta \,dx\,.
\end{gather*}
The operator $\hat{B}$ is a symmetrized extension of $B$, as
$\langle\hat{B}({u},\omega) ^\top,({v},\eta) ^\top\rangle_X=\langle B({u},\omega) ^\top,({v},\eta) ^\top\rangle_X$ for all
${(u,\omega) ^\top}$, $(v,\eta) ^\top\in X$, which in contrast to $B$ fulfils
$\langle\hat{B}({u},\omega) ^\top,({u},\omega) ^\top\rangle_X=0$ for all
$({u},\omega) ^\top\in X$. Thus, we have the following result:
\begin{prop}\label{4.7} For $p>\frac{11}{5}$ the operator family
  $\hat A(t):X\to X^*$, $t\in I$, satisfies
  (\hyperlink{A.1}{A.1})--(\hyperlink{A.4}{A.4}).
\end{prop}

\begin{proof}
  The assertion is essentially proved in \cite{br-mperf}. For the sake
  of completeness we sketch the main arguments. Let us first consider
  $S(t)+N(t):X\to X^*$, $t\in I$. From (\hyperlink{S.1}{S.1}),
  (\hyperlink{N.1}{N.1}), (\hyperlink{S.2}{S.2}), (\hyperlink{N.2}{N.2}) and
  (\hyperlink{E.1}{E.1}) in conjunction with the standard theory of
  Nemytski\u{\i} operators {(cf.~\cite[Theorem 1.43]{Rou05})} we
  deduce for almost every $t\in I$ the well-definiteness and
  continuity of $S(t)+N(t):X\to X^*$, as well as  condition
  (\hyperlink{A.3}{A.3}). Condition (\hyperlink{A.2}{A.2}) follows in
  a standard way by using (\hyperlink{S.1}{S.1}),
  (\hyperlink{N.1}{N.1}), 
  (\hyperlink{E.1}{E.1}), Pettis' and Fubini's theorem, since $X$ is
  separable.  (\hyperlink{S.3}{S.3})  and (\hyperlink{N.3}{N.3}) certainly imply for
  almost every $t\in I$ the monotonicity of $S(t)+N(t):X\to X^*$. Hence,
  $S(t)+N(t):X\to X^*$ is for almost every $t\in I$ pseudo-monotone, i.e.,
  condition (\hyperlink{A.1}{A.1}) is satisfied.  Eventually, it can
  readily be seen by exploiting (\hyperlink{S.1}{S.1}) and (\hyperlink{N.1}{N.1}) that
  $S(t):X\to X^*$, $t\in I$, satisfies (\hyperlink{A.4}{A.4}).
		
  Next, we treat the more delicate part $\hat{B}:X\to X^*$. One can verify
  by the standard theory of Nemytski\u{\i} operators and Rellich's
  compactness theorem that $\hat{B}:X\to X^*$ is bounded and
  pseudo-monotone, i.e., satisfies (\hyperlink{A.1}{A.1}) and
  (\hyperlink{A.2}{A.2}). As already pointed out above we have 
  $\langle \hat{B}({u},\omega),({u},\omega)\rangle_X=0$ for all
  $({u},\omega)\in X$, i.e., $\hat{B}$ satisfies (\hyperlink{A.3}{A.3}).
  To verify that $\hat B$ satisfies (\hyperlink{A.4}{A.4}) we note
  that it is sufficient to treat the last two terms, since the same
  estimates apply for the first two if one replaces $\omega$ and $\eta$
  by $u$ and $v$, respectively. Thus, we define for fixed $u \in
  W^{1,p}_0(\Omega)^3$ the operator  $\tilde B$ via $\langle
  \tilde{B}\omega,\eta\rangle_{W^{1,p}_0} := \frac 12 \int_\Omega {{\eta}\otimes
  {u}:\nabla {\omega}}- \omega \otimes u : \nabla \eta \,dx$. By Hölder's inequality there
  holds for every $\eta,\omega \in W^{1,p}_0(\Omega)^3$ 
  \begin{align}
    \vert\langle \tilde{B} \omega,\eta\rangle\vert\leq
    \|{u}\|_{L^{2p'}}    \|{\eta}\|_{L^{2p'}}\|\omega\|_{W^{1,p}_0}+
    \|{\omega}\|_{L^{2p'}}    \|{u}\|_{L^{2p'}}\|\eta\|_{W^{1,p}_0}\,. \label{eq:4.7.1} 
  \end{align}
  For $p\ge 3$, the Sobolev embedding provides $\|v\|_{L^{2p'}}\leq c\|v\|_{W^{1,p}_0}$ for every
  ${v}\in  W^{1,p}_0(\Omega)^3$.  Thus, a twofold application of the $\varepsilon$--Young inequality 
  yields for every
  $\varepsilon>0$
  \begin{align*}
    \vert\langle \tilde{B} \omega,\eta\rangle \vert&\leq
    c\,     \|{u}\|_{W^{1,p}_0}    \|{\omega}\|_{W^{1,p}_0}
    \|{\eta}\|_{W^{1,p}_0} \\& \le \varepsilon  \|{u}\|^p_{W^{1,p}_0}  +
    c_{\varepsilon}  (\|{\omega}\|_{W^{1,p}_0}
    \|{\eta}\|_{W^{1,p}_0})^{p'}
    \\& \le \varepsilon  \|{u}\|^p_{W^{1,p}_0}  +
   \varepsilon \|{\omega}\|_{W^{1,p}_0}^p +
   c_{\varepsilon} \|{\eta}\|_{W^{1,p}_0}^{\frac{p}{p-2}}
   \\& \le \varepsilon  \|{u}\|^p_{W^{1,p}_0}  +
   \varepsilon \|{\omega}\|_{W^{1,p}_0}^p +
   c_{\varepsilon}(1+\|{\eta}\|_{W^{1,p}_0}^p),
  \end{align*}
	where we exploited for the last inequality that $a^{\frac p{p-2}}\leq (1+a)^{\frac p{p-2}}\leq (1+a)^{ p}\leq
	c(p)(1+a^{p})$, valid for all $a\ge 0$ and $p\ge 3$. Therefore,
  (\hyperlink{A.4}{A.4}) is satisfied for $\varepsilon>0$ sufficiently small.

  If $p\in (\frac{11}{5},3)$, then by interpolation with
  $\frac{1}{\rho}=\frac{1-\theta}{p^*}+\frac{\theta}{2}$, where
  $\rho=p\frac{5}{3}$, $\theta=\frac{2}{5}$ and
  $p^*=\frac{3p}{3-p}$, we obtain for all ${v}\in W^{1,p}_0(\Omega)^3$
  \begin{align}
    \|{v}\|_{L^{\rho}}\leq
    \|{v}\|_{L^2}^{\frac{2}{5}}\|v\|_{L^{p^*}}^{\frac{3}{5}}\leq
    \|{v}\|_{L^2}^{\frac{2}{5}}\|v\|_{W^{1,p}_0}^{\frac{3}{5}}.
    \label{eq:4.7.2} 
  \end{align}
  Hence, since  $\rho\ge 2p'$, we further conclude from
  \eqref{eq:4.7.2} in \eqref{eq:4.7.1} that for all
  ${\omega},{\eta} \in W^{1,p}_0(\Omega)^3 $
  \begin{align*}
    \vert\langle \tilde{B} \omega,\eta\rangle\vert
    &\leq    c\,     \|{u}\|_{L^{2}}^{\frac{2}{5}}
      \|u \|_{W^{1,p}_0}^{\frac{3}{5}}    \|{\eta}\|_{L^{2}}^{\frac{2}{5}}
      \|{\eta}\|_{W^{1,p}_0}^{\frac{3}{5}}\|\omega\|_{W^{1,p}_0} +  c\,     \|{u}\|_{L^{2}}^{\frac{2}{5}}
      \|u \|_{W^{1,p}_0}^{\frac{3}{5}}    \|\omega\|_{L^{2}}^{\frac{2}{5}}
      \|{\omega}\|_{W^{1,p}_0}^{\frac{3}{5}}\|\eta\|_{W^{1,p}_0} 
    \\
    &\leq  \varepsilon  \|{u}\|^p_{W^{1,p}_0} \! +
    \varepsilon   \|{\omega}\|_{W^{1,p}_0}^p \!+ c_\varepsilon
    \|{\eta}\|^p_{W^{1,p}_0} \!+  c_\varepsilon
      \|{u}\|_{L^2}^{\frac{4p}{5p-11}} +c_\varepsilon
      \|{\omega}\|_{L^2}^{\frac{4p}{5p-11}} +c_\varepsilon
      \|{\eta}\|_{L^2}^{\frac{4p}{5p-11}} , 
  \end{align*}
  where we applied an appropriate version of the $\varepsilon$-Young's
  inequality with exponents $\frac{10(p-1)}{5p-11}$,
  $\frac{5(p-1)}{3}$, $\frac{10(p-1)}{5p-11}$, $\frac{5(p-1)}{3}$ and
  $p$. Thus, (\hyperlink{A.4}{A.4}) is satisfied for $\varepsilon>0$ sufficiently small.

  Altogether, $\hat{B}:X\to X^*$ satisfies (\hyperlink{A.4}{A.4}) with
  $q= \frac{4p}{5p-11}$. As
  a result, $\hat A(t):X\to X^*$, $t\in I$, satisfies
  (\hyperlink{A.1}{A.1})--(\hyperlink{A.4}{A.4}).
  \hfill$\qed$
\end{proof}

Let us now define the discrete setup. For given
$m,\ell,k \in \mathbb N_0$, we define $X_h:=X_h^u \times X_h^\omega$,
where ${X^u_h\subset\mathcal{P}_m(\mathcal{T}_h)^3\cap
W^{1,p}_0(\Omega)^3}$, ${X^\omega_h\subset\mathcal{P}_k(\mathcal{T}_h)^3\cap
W^{1,p}_0(\Omega)^3}$ are appropriate finite element spaces, both equipped with the $W^{1,p}_0(\Omega)^3$--norm. In addition,
we define for $h>0$ and~an~\mbox{appropriate} finite element space
$Z_h\subset \mathcal{P}_\ell(\mathcal{T}_h)\cap Z$ equipped with the $Z$--norm, where $Z:=L^{p'}(\Omega)$, the
space  
\begin{align*}
  V_h:=\Big\{({u}_h,\omega_h) ^\top\in X_h \,\Big|\, \int_\Omega\divo {u}_h\,\eta_h\, dx=0\text{ for all }\eta_h\in Z_h\Big\}.
\end{align*}
For a null sequence
$(h_n)_{n\in \mathbb{N}}\subseteq\left(0,\infty\right)$, we set 
$V_n:=V_{h_n}$, $n\in \mathbb{N}$. To ensure that the spaces $(V_n)_{n\in \mathbb{N}}$ are
a quasi non-conforming approximation of $V$ in $X$ we make the
following assumption:
\begin{asum}[Projection operators]\label{proj}
  For every $h>0$ it holds \linebreak
  ${\mathcal{P}_1(\mathcal{T}_h)^3\subset X_h^u}$,   ${\mathcal{P}_1(\mathcal{T}_h)^3\subset X_h^\omega}$,
  $\mathbb{R}\subset Z_h$, and that there exist linear interpolation 
  operators $\Uppi_h^{\divo }: W^{1,p}_0(\Omega)^3\to X_h^u$ ,
  $\Uppi_h^{\omega}: W^{1,p}_0(\Omega)^3 \to X_h^\omega$ and
  $\Uppi_h^{Z}:Z \to Z_h$
  with the following properties:
  \begin{description}[{(iii)}]
  \item[(i)] \textbf{Divergence preservation of $\Uppi_h^{\divo }$ in
      $Z_h^*$:} It holds for all ${u}\in W^{1,p}_0(\Omega)^3$ and $\eta_h\in Z_h$
    \begin{align*}
      \int_\Omega \divo {u}\,\eta_h\, dx=\int_\Omega \divo
      \Uppi_h^{\divo }{u}\,\eta_h\, dx\,.
    \end{align*}
  \item[(ii)] \textbf{local $W^{1,1}$-stability of
      $\Uppi_h^{\divo }$:} There exists a constant $c>0$, independent
    of $h>0$, such that for every ${u}\in W^{1,p}_0(\Omega)^3$ and
    $K\in \mathcal{T}_h$\footnote{The neighbourhood $S_K$ of a simplex
      $K \in \mathcal T_h$ is defined via
      $S_K := \text{interior } \bigcup _{\{\overline K' \in \mathcal
        T_h \fdg \overline K'\cap \overline K \neq \emptyset\}}
      \overline K'$.}
    \begin{align*}
      \Vert \Uppi_h^{\divo }{u}\Vert _{L^1(K)}\leq
      c\,\Vert{u}\Vert _{L^1(S_K)}+c\, h_K\, \Vert \nabla{u}\Vert_{L^1(S_K)}.
    \end{align*}
  \item[(iii)] \textbf{local $L^1$-stability of $\Uppi_h^Z$:} There exists a constant
    $c>0$, independent of $h>0$, such that for every $\eta\in Z$ and $K\in \mathcal{T}_h$ 
    \begin{align*}
      \Vert \Uppi_h^{Z }{\eta}\Vert _{L^1(K)}\leq
      c\,\Vert{\eta}\Vert _{L^1(S_K)}. 
    \end{align*}
  \item[(iv)] \textbf{local $W^{1,1}$-stability of
      $\Uppi_h^{\omega }$:} There exists a constant $c>0$, independent
    of $h>0$, such that for every ${\omega}\in W^{1,p}_0(\Omega)^3$ and
    $K\in \mathcal{T}_h$
    \begin{align*}
      \Vert \Uppi_h^{\omega }{\omega}\Vert _{L^1(K)}\leq
      c\,\Vert{\omega}\Vert _{L^1(S_K)}+c\, h_K\, \Vert \nabla{\omega}\Vert_{L^1(S_K)}.
    \end{align*}
  \end{description}
\end{asum}
Certainly, the existence of such operators depends on the choice of
$X_h$ and $Z_h$. It is shown in \cite{BF91}, \cite{GL01}, \cite{GS03},
\cite{BBDR12}, \cite{GNS15}, \cite{DST2021}, \cite{Scott21} that $\Uppi_h^{\divo }$ exists for a
variety of spaces $X_h$ and $Z_h$, which, e.g., include the
Taylor-Hood, the spatially conforming Crouzeix--Raviart, and the MINI element in
dimension two and three.  Projection operators $\Uppi_h^Z$ satisfying
Assumption \ref{proj} (iii) are e.g. the Cl\'ement interpolation
operator (cf.~\cite{clement}) and a version of the Scott-Zhang
interpolation operator (cf.~\cite{zhang-scott}). The standard 
Scott-Zhang interpolation operator (cf.~\cite{zhang-scott}) satisfies
Assumption \ref{proj} (iv).  The abstract assumptions allow for an
easy extension of our results to other choices of $X_h$ and $Z_h$.

The next proposition shows that  the
approximation of divergence-free Sobolev functions through discretely
divergence-free finite element spaces perfectly fits into the
framework of quasi non-conforming approximations.

\begin{prop}\label{ex:3.5}
  Let $p > \frac 65$ and let Assumption~\ref{proj} be satisfied. Then,
  the sequence $(V_n)_{n\in \mathbb{N}}$ forms a quasi non-conforming
  approximation of $V$ in $X$.
\end{prop}
	
\begin{proof}
  Clearly, $(V,H,\textup{id}_V)$ and $(X,Y,\textup{id}_X)$ form
  evolution triples satisfying $V\subseteq X$ with \linebreak
  ${\|\!\cdot\!\|_V=\|\!\cdot\!\|_X}$ in $V$ and $H\subseteq Y$ with
  $(\cdot,\cdot)_H=(\cdot,\cdot)_Y$ in $H\times H$. So, let us verify
  that $(V_n)_{n\in\mathbb{N}}$ satisfies (\hyperlink{QNC.1}{QNC.1})
  and (\hyperlink{QNC.2}{QNC.2}):
	
  \textbf{ad (\hyperlink{QNC.1}{QNC.1})} Due to their finite
  dimensionality, the spaces $(V_n)_{n\in \mathbb{N}}$ are closed. We
  set  ${C:=\mathcal{V} \times C_0^\infty(\Omega)^3}$, where $\mathcal V=\{{v}\in C_0^\infty(\Omega)^3\fdg\divo
  {v}=0\}$. Let $({u},\omega)\in C$. Then, owing to standard
  estimates for polynomial projection operators (cf.~\cite[Lemma
  2.25]{tscherpel-phd}), the sequence
  $({u}_n, \omega_n) ^\top:=(\Uppi_{h_n}^{\divo }{u}, \Uppi_{h_n}^{\omega}{\omega}) ^\top\in V_n$,
  $n\in \mathbb{N}$, satisfies
  \begin{align*}
    \|({u}, \eta) ^\top-({u}_n, \omega_n) ^\top\|_X\leq
    c\,h_n\,\|({u}, \omega) ^\top\|_{W^{2,p
    }(\Omega)^3}\;\;\to\;\; 0\quad(n\to \infty).
  \end{align*}
		
  \textbf{ad (\hyperlink{QNC.2}{QNC.2})} Let
  $(\boldsymbol{u}_n, \boldsymbol{\omega}_n) ^\top\in L^p(I,V_{m_n})$, $n\in \mathbb{N}$, where
  $(m_n)_{n\in\mathbb{N}}\subseteq\mathbb{N}$ with $m_n\to\infty$
  $(n\to\infty)$, be such that
  $(\boldsymbol{u}_n,\boldsymbol{\omega}_n) ^\top\rightharpoonup (\boldsymbol{u},\boldsymbol{\omega}) ^\top$ in $L^p(I,X)$
  $(n\to\infty)$. Since the second component of $V$ and $X$ coincide
  we only have to show that $\boldsymbol u \in
  L^p(I,W^{1,p}_{0,\divo}(\Omega)^3)$. Let $\eta\in C^\infty_0(\Omega)$ and
  $\varphi\in C_0^\infty(I)$. As in the previous step we infer that
  the sequence $\eta_n:=\Uppi_{m_n}^Z\eta\in Z_{h_{m_n}}$,
  $n\in\mathbb{N}$, satisfies $\eta_n\to\eta$ in $Z$
  $(n\to\infty)$. On the other hand, in view of the definition of
  $V_{m_n}$ there holds 
  $\langle\divo \boldsymbol{u}_n(t),\eta_n\rangle_Z=0$ for almost
  every $t\in I$ and $n\in \mathbb{N}$. Thus, for every $n\in
  \mathbb{N}$ we have 
  \begin{align}
    \int_I{\int_\Omega\divo \boldsymbol{u}_n(s)\,\eta_n\,dx \,\varphi(s)\,ds}=0.\label{eq:3.4}
  \end{align}
  By passing in \eqref{eq:3.4} for $n\to \infty$, we obtain for every $\eta \in C_0^\infty(\Omega)$ and $\varphi\in C_0^\infty(I)$
  \begin{align*}
    \int_I{\int_\Omega \divo \boldsymbol{u}(s)\,\eta\, dx\,\varphi(s)\,ds}=0,
  \end{align*}
  i.e.,  $\boldsymbol{u}\in L^p(I,W^{1,p}_{0,\divo}(\Omega)^3) $.
  \hfill$\qed$
\end{proof}

\begin{rmk}
  From the proof of Proposition \ref{ex:3.5} it is clear that instead
  of Assumption \ref{proj} it is sufficient to require that there
  exist dense subsets $\mathcal X$ of $W^{1,p}_0(\Omega)^3$ and
  $\mathcal Z$ of $L^{p'}(\Omega)$ and linear interpolation operators
  $\Uppi_h^{\divo }: \mathcal X \to X_h^u$ ,
  $\Uppi_h^{\omega}: \mathcal X \to X_h^\omega$ and
  $\Uppi_h^{Z}: \mathcal Z\to Z_h$ which have a global approximation
  property, i.e.,  for every $({u},\omega)\in \mathcal X \times \mathcal X$
  there holds $\| ({u}, \omega) ^\top-(\Uppi_{h}^{\divo }{u},
  \Uppi_{h}^{\omega}{\omega}) ^\top\|_{X} \to 0$ for $h\to 0$, as well as for
  every $\eta \in \mathcal Z$
  there holds $\| \eta - \Uppi_{h}^{Z}\eta\|_{Z} \to 0$ for $h\to 0$.
\end{rmk}
Let us summarize  our setup for the treatment of problem
\eqref{eq:erf} describing the motion of micropolar electrorheological
fluids.

\begin{asum}\label{asumex}
  Let $\Omega\subseteq \mathbb{R}^3$ be a bounded polygonal
  Lipschitz domain, $I:=\left(0,T\right)$, $T<\infty$, and
  $p>\frac{11}{5}$. We make the following assumptions:
  \begin{description}[{(iii)}]
  \item[(i)] The stress tensors and the electric field satisfy
    (\hyperlink{S.1}{S.1})--(\hyperlink{S.3}{S.3}),
    (\hyperlink{N.1}{N.1})--(\hyperlink{N.3}{N.3}), and
    (\hyperlink{E.1}{E.1}).
  \item[(ii)] $(V,H,\text{id})$, $(X,Y,\text{id})$ and
    $(V_n)_{n\in \mathbb{N}}$ are defined as in Proposition
    \ref{ex:3.5}.
  \item[(iii)] $\mathcal U_0:=({u}_0, \omega_0) ^\top\in H$ and $\mathcal U^0_n:=({u}_n^0,\omega^0_n)^\top\in V_n$, $n\in
    \mathbb{N}$, such that $\mathcal {U}_n^0\to \mathcal{U}_0$ in $Y$
    $(n\to \infty)$ and $\sup_{n\in
      \mathbb{N}}{\|\mathcal{U}_n^0\|_Y}\leq \|\mathcal{U}_0\|_H$. 
  \item[(iv)] 
    $\mathbfcal F:=(\boldsymbol{f},\boldsymbol \ell)^\top\in L^{p'}(I,X^*)$.
  \item[(v)] $\widehat A(t):X\to X^*$, $t\in I$, is defined as in Proposition \ref{4.7}.
  \end{description}
\end{asum}
Furthermore, we denote by $e:=(\textup{id}_V)^*R_H:V\to V^*$ the
canonical embedding with respect to the evolution triple
$(V,H,\textup{id})$. Thus, the quasi non-conforming
Rothe--Galerkin scheme in this setup reads:

\begin{alg}
  Let Assumption \ref{asumex} be satisfied. For given
  $K,n\in \mathbb{N}$ the sequence of iterates
  ${\mathcal U_n^k:=({u}_n^{k}, {\omega}_n^{k})^\top\in V_n}$,
  ${k=0,\dots,K}$ is given solving the implicit Rothe--Galerkin scheme for
  ${\tau=\frac{T}{K}}$ and $k=1,\dots,K$
  \begin{align}
    (d_\tau \,\mathcal {U}^k_n,\mathcal {W}_n)_Y+\langle [\widehat A]^\tau_k\,
    \mathcal {U}^k_n,\mathcal {W}_n\rangle_X=\langle
    [\mathbfcal{F}]^\tau_k,\mathcal {W}_n\rangle_X\quad\text{ for all
    }\mathcal {W}_n\in V_n.\label{eq:erfnon}
  \end{align}
\end{alg}

By means of Proposition \ref{5.1}, Proposition \ref{apriori}, Theorem
\ref{5.17} and the observations already made in Proposition
\ref{4.7} and Proposition \ref{ex:3.5}, we can immediately conclude the following results.
		
\begin{thm}[Well-posedness, stability and weak convergence of
  \eqref{eq:erfnon}]\label{rem:7.4}\newline
  Let Assumption \ref{asumex} be satisfied. Then, it holds:
  \begin{description}[{(III)}]
  \item[(I)] \textbf{Well-posedness:} For every $K,n\in \mathbb{N}$
    there exist iterates $(\mathcal{U}_n^k)_{k=0,\dots,K}\subseteq V_n$,
    solving \eqref{eq:erfnon}, without any restrictions on the
    step-size. 
  \item[(II)] \textbf{Stability:} The
    corresponding piece-wise constant
    interpolants   $\overline{\mathbfcal{U}}_n^\tau\in
    \mathbfcal{S}^0(\mathcal{I}_\tau,X)$, $K,n\in\mathbb{N}$ with $\tau=\frac{T}{K}$, are bounded in $L^p(I,X)\cap L^\infty(I,Y)$.
  \item[(III)] \textbf{Weak convergence:} If
    $(\overline{\mathbfcal{U}}_n)_{n\in\mathbb{N}}:=(\overline{\mathbfcal{U}}_{m_n}^{\tau_n})_{n\in\mathbb{N}}$,
    where $\tau_n=\frac{T}{K_n}$ and
    ${K_n,m_n\to \infty }$~${(n\to\infty)}$, is an arbitrary diagonal
    sequence of the piece-wise constant interpolants
    ${\overline{\mathbfcal{U}}_n^\tau\in
      \mathbfcal{S}^0(\mathcal{I}_\tau,X)}$, ${K,n\in\mathbb{N}}$ with
    ${\tau=\frac{T}{K}}$, then there exists a not relabelled
    subsequence and a weak limit
    ${\overline{\mathbfcal{U}}\in L^p(I,V)\cap L^\infty(I,H)}$ such
    that
    \begin{align*}
      \begin{alignedat}{2}
        \overline{\mathbfcal{U}}_n&\;\;\rightharpoonup\;\;\overline{\mathbfcal{U}}&&\quad\text{ in }L^p(I,X),\\
        \overline{\mathbfcal{U}}_n&\;\;\overset{\ast}{\rightharpoondown}\;\;\overline{\mathbfcal{U}}&&\quad\text{
          in }L^\infty(I,Y),
      \end{alignedat}
         \begin{aligned}
           \quad(n\to\infty).
         \end{aligned}
    \end{align*}
    Furthermore, it follows that $\overline{\mathbfcal{U}}\in
    \mathbfcal{W}_e^{1,p,p}(I,V,H)\cap L^\infty(I,H)$ satisfies
    $\overline{\mathbfcal{U}}(0)=\mathcal{U}_0$ in $H$ and for all
    ${\boldsymbol{\phi}\in L^p(I,V)}$ 
    \begin{align*}
      \int_I{\Big\langle\frac{d_e\overline{\mathbfcal{U}}}{dt}(t),\boldsymbol{\phi}(t)\Big\rangle_V\,dt}+\int_I{\langle
      A(t)(\overline{\mathbfcal{U}}(t)),\boldsymbol{\phi}(t)\rangle_X\,dt}=\int_I{\langle\mathbfcal{F}(t),\boldsymbol{\phi}(t)\rangle_X\,dt}. 
    \end{align*}
  \end{description}
\end{thm}
		
\begin{proof}
  \textbf{ad (I)/(II)} The assertions follow immediately from
  Proposition \ref{5.1} and Proposition \ref{apriori}, since the
  operator family $\hat A(t):=S(t)+N(t)+\hat{B}:X\to X^*$, $t\in I$, satisfies
  (\hyperlink{A.1}{A.1})--(\hyperlink{A.4}{A.4}) with $c_1=0$ due to
  Proposition \ref{4.7}.
			
  \textbf{ad (III)} The assertions follow from Theorem \ref{5.17}. To
  be more precise, Theorem \ref{5.17} initially yields that
  $\overline{\mathbfcal{U}}\in \mathbfcal{W}_e^{1,p,q}(I,V,H)$, where
  $q>1$ is specified in the proof of Proposition \ref{4.7}, satisfies
  $\overline{\mathbfcal{U}}(0)=\mathcal{U}_0$ in $H$ and for all
  $\boldsymbol{\phi}\in C^1_0(I,V)$
  \begin{align*}
      \int_I{\Big\langle\frac{d_e\overline{\mathbfcal{U}}}{dt}(t),\boldsymbol{\phi}(t)\Big\rangle_V\,dt}+\int_I{\langle
    \hat A(t)(\overline{\mathbfcal{U}}(t)),\boldsymbol{\phi}(t)\rangle_X\,dt}=\int_I{\langle\mathbfcal{F}(t),\boldsymbol{\phi}(t)\rangle_X\,dt}
  \end{align*}
  Since
  $\langle\hat{B}(\overline{\mathbfcal{U}}(t)),\boldsymbol{\phi}(t)\rangle_X=\langle
  B(\overline{\mathbfcal{U}}(t)),\boldsymbol{\phi}(t)\rangle_X$ for
  almost every $t\in I$ and all $\boldsymbol{\phi}\in C^1_0(I,V)$ as
  well as $B(\overline{\mathbfcal{U}}(\cdot))\in L^{p'}(I,V^*)$
  (cf.~\cite[Theorem~2.30]{br-mperf}), as
  $\overline{\mathbfcal{U}}(t)\in L^p(I,V)\cap L^\infty(I,H)$, we
  actually proved that
  $\overline{\mathbfcal{U}}\in \mathbfcal{W}_e^{1,p,p}(I,V,H)\cap
  L^\infty(I,H)$, is such that for all $\boldsymbol{\phi}\in C^1_0(I,V)$
  \begin{align*}
      \int_I{\Big\langle\frac{d_e\overline{\mathbfcal{U}}}{dt}(t),\boldsymbol{\phi}(t)\Big\rangle_V\,dt}+\int_I{\langle
    A(t)(\overline{\mathbfcal{U}}(t)),\boldsymbol{\phi}(t)\rangle_X\,dt}=\int_I{\langle\mathbfcal{F}(t),\boldsymbol{\phi}(t)\rangle_X\,dt}.
    \tag*{$\qed$}
  \end{align*}
\end{proof}
	
\begin{rmk}\label{rem:ex}
  The results in Theorem~\ref{rem:7.4} for micropolar
  electrorheological fluids are completely new. There are some
  previous results for the various subcases. Let us mention some of
  them. For the special
  subcase of generalized Newtonian fluids the result in Theorem
  \ref{rem:7.4} is, among others, already contained in
  \cite{tscherpel-phd} (cf.~\cite{sueli-tscherpel}). 
  Theorem \ref{rem:7.4} extends the convergence of a conforming
  implicit fully discrete Rothe--Galerkin scheme of an evolution
  problem with Bochner pseudo-monotone operators, proved in
  \cite{BR19}, to the quasi non-conforming setting. Convergence
  results with optimal rates for the unsteady $p$-Navier-Stokes
  equations and related problems can be found e.g.~in \cite{bdr-3-2},
  \cite{sarah-phd}, \cite{breit-mensah} and~\cite{br-parabolic}. For
  the special subcase of micropolar fluids with $p=2$ optimal
  convergence rates for strong solutions are proved in \cite{NSAT14},
  \cite{Ra16}. The convergence of a fully discrete approximation
  towards a mollified problem for electrorheological fluids with
  variable exponents is proved in \cite {CHP10}.  
\end{rmk}

\subsection {A modified Smagorinsky model}

We consider the following modified\footnote{Models of this type are
  known in literature  as improved eddy
viscosity models.}  version of the Smagorinsky model
for turbulent flows 
\begin{align}
  \begin{split}
    \begin{alignedat}{2}
      \partial_t \boldsymbol{u}-\divo \big ( \delta\, \abs{\bD \boldsymbol
        u} \bD \boldsymbol u\big )+\text{div}(\boldsymbol{u}\otimes
      \boldsymbol{u}) +\nabla\boldsymbol{q}&=\boldsymbol{f}&&\quad\text{ in }I\times\Omega,
      \\
      \divo \boldsymbol{u}&=0&&\quad\text{ in }I\times\Omega,
      \\
      \boldsymbol{u}&=\mathbf{0}&&\quad\text{ on }I\times \partial\Omega,
      \\
  \boldsymbol{u}(0)&={u}_0&&\quad\text{ in }\Omega.
    \end{alignedat}
  \end{split}\label{eq:lomax}
\end{align}
In these equations $\boldsymbol u$ denotes the velocity,
$\boldsymbol{q}$ is the pressure, $ {\boldsymbol {f}}$ is the
mechanical body force and
$ \delta (\cdot):=\dist (\cdot, \partial \Omega)$ the distance from
the boundary $\partial \Omega$. This modification (with a
position-dependent eddy viscosity) is intended to improve some of the
weakness of the original Smagorinsky model, which is considered to be
too dissipative in laminar regimes and close to walls, and thus does
not work satisfactorily for the computation of boundary layers and of
the transition to turbulence (cf.~\cite{CRL14}, \cite{Sag02},
\cite{ABLN20}). The introduction of models similar to \eqref{eq:lomax}
dates back to Obukhov and van Driest, at least in the case of a
channel flow, see~\cite{CRL14}. Improved (as well as adaptive and
selective) eddy viscosity models are mostly used in numerical
computations, while the basic analytical problems are still mainly
open. 
%

For the functional setting we make use of the standard theory of
weighted Lebesgue spaces $L^p(\Omega; \sigma)$ and Sobolev spaces
$W^{1,p}(\Omega;\sigma) $ (cf.~\cite{heinonen},
\cite{kufner_opic}) with a weight $\sigma$ belonging to the Muckenhoupt
class $\mathcal A_p$. The norm in
$L^p(\Omega; \sigma)$ is defined as $\|v\|_{L^p(\Omega; \sigma)}:=
\big (\int _\Omega \abs{v}^p \, \sigma\, dx\big )^{\frac 1p}$.
The dual space $(L^p(\Omega; \sigma))^*$ can be
identified with 
$L^{p'}(\Omega;\sigma')$, where $\sigma':=\sigma^{\frac{-1}{p-1}}$. In
particular we have
\begin{equation}  \label{eq:hoelder}
  \Bigabs{\int_\Omega{f}{g}\, dx }\le \|{f}\|_{L^p(\Omega;\sigma)} \|{g}\|_{L^{p'}(\Omega;\sigma')},
\end{equation}
if $f \in L^{p}(\Omega;\sigma)$,  $g \in L^{p'}(\Omega;\sigma')$. Note
that $\sigma \in \mathcal A_p$ iff $\sigma' \in \mathcal A_{p'}$. The
space  $W^{1,p}_0(\Omega;\sigma) $ is defined as the completion of
$C^\infty_0(\Omega)$ with respect to the norm $\|\cdot \|_{W^{1,p}(\Omega;
  \sigma)}:=\|\cdot \|_{L^p(\Omega;
  \sigma)} +\|\nabla \cdot \|_{L^p(\Omega; \sigma)}$. The norm
$\|\nabla \cdot \|_{L^p(\Omega; \sigma)}$ is an equivalent norm on ${W^{1,p}_0(\Omega;
  \sigma)}$. 
Let us summarize some
facts needed for our analysis. The distance function $ \delta$ belongs to 
the
Muckenhoupt class $\mathcal A_p$ for $p>2$ (cf.~\cite{dyda}).  The
embedding $W^{1,3}_0(\Omega;\delta) $ into $L^{q}(\Omega;\delta)$ is compact for
all $q \in [1,6)$ (cf.~\cite[Theorem~2.6]{Froe}). Moreover,
$W^{1,3}_0(\Omega;\delta) $ embeds into $L^{3}(\Omega;\delta^{\beta})$ if 
$\beta
\ge -2$ (cf.~\cite{dyda}).

To treat problem \eqref{eq:lomax} we define the function spaces
\begin{align*}
  \begin{alignedat}{2}
    X&:=W^{1,3}_0(\Omega;\delta)^3,   &\qquad Y&:=L^2(\Omega)^3,
    \\
    V&:=W^{1,p}_{0,\divo }(\Omega;\delta),    &H&:=L^2_{\divo }(\Omega),
  \end{alignedat}
\end{align*}
and the operators ${S,B:X\to X^*}$
for all $u, v \in X$ via
\begin{gather*}
  \langle S{u},v\rangle_X:= \int_\Omega \delta\, \abs{\bD  u}
  \bD  u : \bD v \,dx\,,\qquad \langle B{u}, v \rangle_X:=
  \int_\Omega u \otimes u : \nabla v \,dx\,,
\end{gather*}
and set $A(t):=S +B:X\to X^*$, $t \in I$. Then, \eqref{eq:lomax} for ${u}_0\in H$ and
$\boldsymbol f \in L^{p'}(I,X^*)$ can be re-written as the abstract 
evolution equation
\begin{align*}
  \begin{split}
    \begin{alignedat}{2}
      \frac{d\boldsymbol{u}}{dt}(t)+A(t)(\boldsymbol{u}(t))&=\boldsymbol{f}(t)&&\quad\text{
        in }V^*,
      \\
      \boldsymbol{u}(0)&={u}_0&&\quad\text{ in }H.
    \end{alignedat}
  \end{split}
\end{align*}
As in the previous section we modify the operator family $A(t)$, $t
\in I$, and define  $\hat{A}(t):X\to X^*$ via
$\hat A(t):=S +\hat B$, $t \in I$, where $\hat B$ is given 
for all $u,v\in X$ via
\begin{gather*}
  \langle \hat B {u},v \rangle_X:= \frac 12 \int_\Omega v
  \otimes u : \nabla u - {{u}\otimes
  {u}:\nabla {v}} \,dx\,.
\end{gather*}
The operator $\hat{B}$ is a symmetrized extension of $B$, as
$\langle\hat{B}{u},{v}\rangle_X=\langle B{u},{v}\rangle_X$ for all
$u,v\in X$ and fulfils
$\langle\hat{B}{u},{v}\rangle_X=0$ for all
${u}\in X$. Thus, we have the following result:
\begin{prop}\label{4.7-lo}  The operator family
  $\hat A(t):X\to X^*$, $t\in I$, satisfies
  (\hyperlink{A.1}{A.1})--(\hyperlink{A.4}{A.4}) with $p=3$. 
\end{prop}

\begin{proof}
  Let us first consider $S:X\to X^*$. From a straightforward
  modification of the  theory of
  Nemytski\u{\i} operators to weighted spaces  we
  deduce for almost every $t\in I$ the well-definiteness and
  continuity of $S:X\to X^*$, as well as $
     \langle S{u},u\rangle_X 
     =\|\bD u \|_{L^3(\Omega;\delta)}^3$,
  which is condition (\hyperlink{A.3}{A.3}). Condition
  (\hyperlink{A.2}{A.2}) is obviously satisfied, since the operator
  $S$ does not depend on time. The monotonicity of $S:X\to X^*$
  follows in the same way as for the operator $\langle \tilde S
  u,v\rangle _{W^{1,3}_0(\Omega)}:= \int_\Omega \abs{\bD  u}
  \bD  u: \bD v \,dx $ as the arguments to prove this are
  pointwise. Hence, $S:X\to X^*$ is pseudo-monotone, i.e., condition
  (\hyperlink{A.1}{A.1}) is satisfied.  Condition
  (\hyperlink{A.4}{A.4}) follows from the pointwise
 $\varepsilon$-Young inequality. 
		
  Next, we treat $\hat{B}:X\to X^*$. Again a
  straightforward modification of theory of Nemytski\u{\i} operators
  to weighted spaces and the above compact embedding yield that
  $\hat{B}:X\to X^*$ is bounded and pseudo-monotone, i.e., satisfies
  (\hyperlink{A.1}{A.1}). Condition
  (\hyperlink{A.2}{A.2}) is obvious, since the operator
  $\hat B$ does not depend on time.   As already
  pointed out above we have $\langle \hat{B}{u},{u}\rangle_X=0$ for
  all ${u}\in X$, i.e., $\hat{B}$ satisfies (\hyperlink{A.3}{A.3}).
  We use H\"older's inequality \eqref{eq:hoelder}, the above stated
  embeddings and the $\varepsilon$-Young inequality to verify that for every $u, v \in X$ there holds 
  \begin{align*}
    \vert\langle \hat {B} u,v \rangle\vert
    &\leq \|{v}\|_{L^{3}(\Omega;\delta^{-\frac 12})}
    \|{u}\|_{L^{3}(\Omega;\delta^{-\frac 12})}\|\nabla u\|_{L^{3}(\Omega;\delta)}+
        \|{u}\|_{L^{3}(\Omega;\delta^{-\frac 12})}^2
    \|\nabla v\|_{L^{3}(\Omega;\delta)}
    \\
    &\leq \varepsilon \|\nabla u\|^3_{L^{3}(\Omega;\delta)}+
    c_\varepsilon \|\nabla v\|^3_{L^{3}(\Omega;\delta)},
  \end{align*}
  i.e., (\hyperlink{A.4}{A.4}) for $\varepsilon>0$ sufficiently small.

  Altogether, $\hat A(t):X\to X^*$, $t\in I$, satisfies
  (\hyperlink{A.1}{A.1})--(\hyperlink{A.4}{A.4}).
  \hfill$\qed$
\end{proof}

Let us now define the discrete setup. For $p\in (1,\infty)$ and a
Muckenhoupt weight $\sigma \in \mathcal A_p$ we define 
$Z:=L^{p'}(\Omega;\sigma')$. For given
$m,\ell \in \mathbb N_0$ we denote by 
${X_h\subset\mathcal{P}_m(\mathcal{T}_h)^3\cap W^{1,p}_0(\Omega;\sigma)^3}$
and $Z_h\subset \mathcal{P}_\ell(\mathcal{T}_h)\cap Z$ suitable
finite element spaces equipped with the
$W^{1,p}_0(\Omega;\sigma)^3$-norm~and~\mbox{$Z$-norm},
respectively. In addition, we define for $h>0$ the space 
\begin{align*}
  V_h:=\Big\{{u}_h\in X_h \,\Big|\, \int_\Omega\divo {u}_h\,\eta_h\, dx=0\text{ for all }\eta_h\in Z_h\Big\}.
\end{align*}
For a null sequence
$(h_n)_{n\in \mathbb{N}}\subseteq\left(0,\infty\right)$ we set 
$V_n:=V_{h_n}$, $n\in \mathbb{N}$. To ensure that the spaces $(V_n)_{n\in \mathbb{N}}$ are
a quasi non-conforming approximation of $V$ in $X$ we make the
following assumption:
\begin{asum}[Projection operators]\label{proj:lomax}
  For every $h>0$ 
  there exist linear interpolation 
  operators $\Uppi_h^{\divo }: W^{1,p}_0(\Omega;\sigma)^3\to X_h$  and $\Uppi_h^{Z}:Z\to Z_h$
  with the following properties:
  \begin{description}[{(iii)}]
  \item[(i)] \textbf{Divergence preservation of $\Uppi_h^{\divo }$ in
      $Z_h^*$:} It holds for all ${u}\in W^{1,p}_0(\Omega;\sigma)^3$ and $\eta_h\in Z_h$
    \begin{align*}
      \int_\Omega \divo {u}\,\eta_h\,dx=\int_\Omega \divo
      \Uppi_h^{\divo }{u}\,\eta_h\, dx\,.
    \end{align*}
  \item[(ii)] \textbf{global $W^{1,p}(\Omega;\sigma)$-approximability
      of $\Uppi_h^{\divo }$:} There exists a constant $c>0$,
    independent of $h>0$, such that for every
    ${u}\in W^{1,p}_0(\Omega;\sigma)^3\cap W^{2,p}(\Omega;\sigma)^3$ 
    \begin{align*}
      \Vert  u - \Uppi_h^{\divo }{u}\Vert _{L^p(\Omega;\sigma)}+
      h\, \Vert \nabla u -\nabla \Uppi_h^{\divo }{u}\Vert _{L^p(\Omega;\sigma)} \leq
      c\,h^2\,\Vert{\nabla ^2u}\Vert _{L^p(\Omega;\sigma)}. 
\end{align*}
  \item[(iii)] \textbf{global $L^{p'}(\Omega;\sigma')$-approximability of 
$\Uppi_h^Z$:} There exists a constant
    $c>0$, independent of $h>0$, such that for every $\eta\in Z\cap
    W^{1,p'}(\Omega;\sigma')$ 
    \begin{align*}
      \Vert \eta - \Uppi_h^{Z }{\eta}\Vert _{L^{p'}(\Omega;\sigma')}\leq
      c\,h\,\Vert \nabla{\eta}\Vert _{L^{p'}(\Omega;\sigma')}. 
    \end{align*}
  \end{description}
\end{asum}
\begin{rmk}\label{rem:lo}
  Since interpolation operators in weighted spaces are not so common
  in the literature, we discuss them in some detail.
  
  (i) The Cl\'ement
  interpolation operator (cf.~\cite{clement}) satisfies Assumption
  \ref{proj:lomax} (iii). Indeed, the proof of \cite[Theorem
  4.2]{Ba16}, using a duality argument and a local Poincar\'e
  inequality, also works in the setting of weighted spaces in view of
  the local Poincar\'e inequality in weighted spaces
  (cf.~\cite[Theorem 5.1]{john}).

  (ii) The existence of an operator $\Uppi_h^{\divo}$ satisfying
  Assumptions \ref{proj:lomax} (i), (ii) depends on the choice of
  $X_h$ and $Z_h$. The general strategy from \cite[Section~VI.4]{BF91} can be adapted
  to the weighted setting (cf.~\cite{DOS20} for a similar approach). To do so one needs a projection operator
  $\Uppi_h: W^{1,p}_0(\Omega,\sigma)^3\to \mathcal P_k(\mathcal
  T_h)^3\cap W^{1,1}_0(\Omega)^3$, where $k\in \mathbb N$ is such that
  $\mathcal P_k(\mathcal T_h)^3 \subset X_h$, which satisfies a local
  approximation property, i.e., for every
  ${v}\in W^{1,p}_0(\Omega,\sigma)^3\cap W^{2,p}(\Omega;\sigma)^3$ and
  $K\in \mathcal{T}_h$
  \begin{align}\label{eq:est2}
    \Vert  v - \Uppi_h{v}\Vert _{L^p(K;\sigma)}+
    h\, \Vert \nabla v -\nabla \Uppi_h{v}\Vert _{L^p(K;\sigma)} \leq
    c\,h^2\,\Vert{\nabla ^2v}\Vert _{L^p(S_K;\sigma)}. 
  \end{align}
  The existence of such an operator is proved in \cite[Theorem~5.2,
  5.3]{NOS16}. Moreover, one needs a correction operator
  $\Uppi_h^{\textrm{cor}}:W^{1,1}_0(\Omega)^3 \to X_h$ which is locally
  $W^{1,1}$-stable, i.e., there exists a constant $c>0$, independent
  of $h>0$, such that for every ${v}\in W^{1,1}_0(\Omega)^3$ and
  $K\in \mathcal{T}_h$
  \begin{align*}
    \Vert \Uppi_h^{\textrm{cor}}{v}\Vert _{L^1(K)}\leq
    c\,\Vert{v}\Vert _{L^1(S_K)}+c\, h_K\, \Vert \nabla{v}\Vert_{L^1(S_K)}.
  \end{align*}
  This inequality implies that there exists a constant $c>0$,
  independent of $h>0$, such that for every
  ${v}\in W^{1,p}_0(\Omega;\sigma)^3$ and $K\in \mathcal{T}_h$
  \begin{align}\label{eq:est3}
    \Vert \Uppi_h^{\textrm{cor}}{v}\Vert _{L^p(K;\sigma)}\leq
    c\,\Vert{v}\Vert _{L^p(S_K;\sigma)}+c\, h_K\, \Vert \nabla{v}\Vert_{L^p(S_K;\sigma)}.
  \end{align}
  The proof of this assertion just uses H\"older's inequality, the
  equivalence $\|g\|_{L^\infty(K)} \sim \dashint_K\abs{g}\, dx$ valid
  for all polynomials $g \in \mathcal P_m(K)$, and  $\sigma \in \mathcal A_p$.
  From \eqref{eq:est2}, \eqref{eq:est3} one easily deduces that
$$
\Uppi ^{\divo} _h(v):= \Uppi _h(v)+ \Uppi ^{\textrm{cor}}_h(v -\Uppi _h v)
$$
satisfies Assumptions \ref{proj:lomax} (i), (ii). Consequently, at
least for the MINI element we proved that Assumptions \ref{proj:lomax}
is satisfied.  The abstract assumptions allow for an easy
extension of our results to other choices of $X_h$ and $Z_h$.
\end{rmk}
Proceeding in the same way as in the proof of Proposition \ref{ex:3.5}
one can show:
\begin{prop}\label{ex:3.5-lomax}
  Let Assumption~\ref{proj:lomax} be satisfied for $p=3$ and $\sigma= 
\delta$. Then,
  the sequence $(V_n)_{n\in \mathbb{N}}$ forms a quasi non-conforming
  approximation of $V$ in $X$.
\end{prop}

Let us summarize  our setup for the treatment of problem
\eqref{eq:lomax}.

\begin{asum}\label{asumex-lo}
  Let $\Omega\subseteq \mathbb{R}^3$ be  a bounded polygonal
  Lipschitz domain, $I:=\left(0,T\right)$, and $T<\infty$. We make the following assumptions:
  \begin{description}[{(iii)}]
  \item[(i)] $(V,H,\text{id})$, $(X,Y,\text{id})$ and
    $(V_n)_{n\in \mathbb{N}}$ are defined as in Proposition
    \ref{ex:3.5-lomax}.
  \item[(ii)] ${u}_0\in H$ and ${u}_n^0\in V_n$, $n\in
    \mathbb{N}$, are such that $ {u}_n^0\!\to\! {u}_0$ in $Y$
    $(n\!\to\! \infty)$ and ${\sup_{n\in
      \mathbb{N}}{\|{u}_n^0\|_Y}\!\leq \!\|{u}_0\|_H}$. 
  \item[(iii)] 
    $\boldsymbol{f}\in L^{p'}(I,X^*)$.
  \item[(iv)] $\hat A(t):X\to X^*$, $t\in I$, is defined as in Proposition \ref{4.7-lo}.
  \end{description}
\end{asum}
Furthermore, we denote by $e:=(\textup{id}_V)^*R_H:V\to V^*$ the
canonical embedding with respect to the evolution triple
$(V,H,\textup{id})$.  Thus, the quasi non-conforming
Rothe--Galerkin scheme in this setup reads:

\begin{alg}
  Let Assumption \ref{asumex-lo} be satisfied. For given
  $K,n\in \mathbb{N}$ the sequence of iterates
  ${{u}_n^{k}\in V_n}$,
  ${k=0,\dots,K}$ is given solving the implicit Rothe--Galerkin scheme for
  $\tau=\frac{T}{K}$ and ${k=1,\dots,K}$
  \begin{align}
    (d_\tau {u}^k_n, {v}_n)_Y+\langle [\widehat A]^\tau_k\,
    {u}^k_n, {v}_n\rangle_X=\langle
    [\boldsymbol{f}]^\tau_k, {v}_n\rangle_X\quad\text{ for all
    }{v}_n\in V_n.\label{eq:lomaxnon}
  \end{align}
\end{alg}

By means of Proposition \ref{5.1}, Proposition \ref{apriori}, Theorem
\ref{5.17} and the observations already made in Proposition
\ref{4.7-lo} and Proposition \ref{ex:3.5-lomax}, we can conclude in the same
way as in Theorem~\ref{rem:7.4} the following results: 
		
\begin{thm}[Well-posedness, stability and weak convergence of
  \eqref{eq:lomaxnon}]\label{rem:7.4-lo}\newline
  Let Assumption \ref{asumex-lo} be satisfied. Then, it holds:
  \begin{description}[{(III)}]
  \item[(I)] \textbf{Well-posedness:} For every $K,n\in \mathbb{N}$
    there exist iterates $({u}_n^k)_{k=0,\dots,K}\subseteq V_n$,
    solving \eqref{eq:lomaxnon}, without any restrictions on the
    step-size. 
  \item[(II)] \textbf{Stability:} The
    corresponding piece-wise constant
    interpolants   $\overline{\boldsymbol{u}}_n^\tau\in
    \mathbfcal{S}^0(\mathcal{I}_\tau,X)$, $K,n\in\mathbb{N}$ with $\tau=\frac{T}{K}$, are bounded in $L^p(I,X)\cap L^\infty(I,Y)$.
  \item[(III)] \textbf{Weak convergence:} If
    $(\overline{\boldsymbol{u}}_n)_{n\in\mathbb{N}}:=(\overline{\boldsymbol{u}}_{m_n}^{\tau_n})_{n\in\mathbb{N}}$,
    where $\tau_n=\frac{T}{K_n}$ and
    ${K_n,m_n\to \infty }$~${(n\to\infty)}$, is an arbitrary diagonal
    sequence of the piece-wise constant interpolants
    ${\overline{\boldsymbol{u}}_n^\tau\in
      \mathbfcal{S}^0(\mathcal{I}_\tau,X)}$, ${K,n\in\mathbb{N}}$ with
    ${\tau=\frac{T}{K}}$, then there exists a not relabelled
    subsequence and a weak limit
    ${\overline{\boldsymbol{u}}\in L^p(I,V)\cap L^\infty(I,H)}$ such
    that
    \begin{align*}
      \begin{alignedat}{2}
        \overline{\boldsymbol{u}}_n&\;\;\rightharpoonup\;\;\overline{\boldsymbol{u}}&&\quad\text{ in }L^p(I,X),\\
        \overline{\boldsymbol{u}}_n&\;\;\overset{\ast}{\rightharpoondown}\;\;\overline{\boldsymbol{u}}&&\quad\text{
          in }L^\infty(I,Y),
      \end{alignedat}
         \begin{aligned}
           \quad(n\to\infty).
         \end{aligned}
    \end{align*}
    Furthermore, it follows that $\overline{\boldsymbol{u}}\in
    \mathbfcal{W}_e^{1,p,p}(I,V,H)\cap L^\infty(I,H)$ satisfies
    $\overline{\boldsymbol{u}}(0)={u}_0$ in $H$ and for all
    ${\boldsymbol{\phi}\in L^p(I,V)}$ 
    \begin{align*}
      \int_I{\Big\langle\frac{d_e\overline{\boldsymbol{u}}}{dt}(t),\boldsymbol{\phi}(t)\Big\rangle_V\,dt}+\int_I{\langle
      A(t)(\overline{\boldsymbol{u}}(t)),\boldsymbol{\phi}(t)\rangle_X\,dt}=\int_I{\langle\boldsymbol{f}(t),\boldsymbol{\phi}(t)\rangle_X\,dt}. 
    \end{align*}
  \end{description}
\end{thm}
		
This result is to the best of the authors' knowledge the first one 
proving the convergence of a fully discrete approximation of problem
\eqref{eq:lomax}. Moreover, it is even the first existence proof of weak
solutions for the problem \eqref{eq:lomax} at all.

\newcommand{\squeezeup}{\vspace{-2.5mm}}
        
\section{Numerical experiments}
\label{sec:8}
To conclude, we want to present some numerical experiments with data
having low regularity~that perfectly suit the framework of this
article. 
All numerical experiments were conducted~employing the finite element
software \textsf{FEniCS} \cite{LW10}. All graphics are generated using
the \mbox{\textsf{Matplotlib}~library~\cite{Hun07}}.

We consider for $\Omega:=\left(-1,1\right)^2\subseteq \mathbb{R}^2$,
$T:=0.1$, $Q_T:=I\times \Omega$ and $p:=\frac{11}{5}$, the system
describing the unsteady motion of micropolar electrorheological fluids in two
dimensions, i.e.,\footnote{Here, $\levy:=(\begin{smallmatrix}
    0& 1\\
    \sm1& 0
  \end{smallmatrix})\in \mathbb{M}^{2\times 2}$ denotes the two-dimensional Levi--Civita tensor.
}
\begin{align}
  \begin{split}
    \begin{alignedat}{2}
      \partial_t \boldsymbol{u}-\divo
      \bS+\text{div}(\boldsymbol{u}\otimes \boldsymbol{u})
      +\nabla\boldsymbol{q}&=\boldsymbol{f}&&\quad\text{ in
      }I\times\Omega,
      \\
      \divo \boldsymbol{u}&=0&&\quad\text{ in }I\times\Omega,
      \\
      \partial_t \boldsymbol \omega -\divo \bN
      +\text{div}(\boldsymbol{\omega} \boldsymbol{u})
      &=\boldsymbol{\ell} - \levy:\bS &&\quad\text{ in }I\times\Omega,
      \\
      \boldsymbol{u}=\mathbf{0},\qquad
      \boldsymbol{\omega}&=\boldsymbol{0}&&\quad\text{ on }I\times
      \partial\Omega,
      \\
      \boldsymbol{u}(0)={u}_0,\qquad
      \boldsymbol{\omega}(0)&={\omega}_0&&\quad\text{ in }\Omega,
    \end{alignedat}
  \end{split}\label{eq:erf-2d}
\end{align}
where $\boldsymbol{\omega}\hspace*{-0.05em}\colon \hspace*{-0.05em}I\times \Omega \hspace*{-0.05em}\to\hspace*{-0.05em} \setR$ is the
scalar micro-rotation, $ \boldsymbol{u}\hspace*{-0.05em}\colon\hspace*{-0.05em} I\times \Omega\hspace*{-0.05em} \to\hspace*{-0.05em}
\setR^2$ the velocity and ${\boldsymbol{q}\hspace*{-0.05em}\colon\hspace*{-0.05em} I\hspace*{-0.075em}\times \hspace*{-0.075em}\Omega \hspace*{-0.05em}\to\hspace*{-0.05em}
\setR}$~the pressure. The system \eqref{eq:erf-2d} differs from its three-dimensional
counterpart mainly~in~equation~$\eqref{eq:erf-2d}_3$, which is now a
scalar equation that involves a scalar micro-rotation $\boldsymbol{\omega}$ (cf.~\cite{Lu01} for
the case $p=2$). The analogue to Theorem \ref{rem:7.4} in this setting
holds for $p>2$. Moreover, for the electric~field~$\boldsymbol E$, solving the
two-dimensional quasi-static Maxwell's equations \eqref{maxwell}, we
make the particular choice
\begin{align*}
  \boldsymbol{E}(t,{x}):=(t+x_2,t+x_1)^\top
\end{align*}
for all $(t,{x})^\top\hspace*{-0.1em}=\hspace*{-0.1em}(t,x_1,x_2)^\top\hspace*{-0.3em}\in\hspace*{-0.1em}
Q_T$.  We assume that the stress tensor
${\textrm{S}\hspace*{-0.1em}:\hspace*{-0.1em}\mathbb{M}_{\textup{sym}}^{2\times
    2}\hspace*{-0.1em} \times\hspace*{-0.1em}
  \mathbb{M}_{\textup{skew}}^{2\times
    2}\hspace*{-0.1em}\times\hspace*{-0.1em}
  \mathbb{R}^2\hspace*{-0.1em}\to \hspace*{-0.1em}\mathbb{M}^{2\times
    2}}$ and the couple stress tensor
$\textrm{N}:\mathbb{M}^{2\times 2} \times \mathbb{R}^2\to
\mathbb{M}^{2\times 2} $, for any
$D\in \mathbb{M}_{\textup{sym}}^{2\times 2}$,
${R\in \mathbb{M}_{\textup{skew}}^{2\times
    2}}$~and~${E,L\in \mathbb{R}^2}$, have the form
\begin{align*}  
  \textrm{S}(D,R,E)&:=(1+\vert {E}\vert^2)
                     (\kappa+\vert{D}\vert)^{p-2}{D}+
                     \vert{E}\vert^2(\kappa+\vert{R}\vert)^{p-2}
                     {R},
  \\
  {N}(L,E)&:=(1+\vert{E}\vert^2)
            (\kappa+\vert L\vert)^{p-2}L,
\end{align*}
with $\kappa=0.001$, where we used the notation $D = \bD u $,
$R=R( u, \omega):= W u +\levy\omega =W u+\big(\begin{smallmatrix}
  0&\omega\\
  -\omega&0
\end{smallmatrix}\big)$. We treat solutions with a
point singularity at the origin in the velocity and the
micro-rotation. More precisely, we assume that for every
$(t,{x})^\top=(t,x_1,x_2)^\top\in
Q_T$, there holds\footnote{The exact solutions
  do not satisfy the homogeneous boundary conditions
  \eqref{eq:erf-2d}$_4$. However, the error is mainly concentrated
  around the singularity in the origin and hence the small
  inconsistency with our theoretical~setup~does not have any influence
  in the results.\vspace*{-7mm}}
\begin{align}
  \boldsymbol{u}(t,{x}):=
  (t,t)^\top +\vert{x}\vert^{\alpha-1}(x_2,-x_1)^{\top},\quad
  \boldsymbol{\omega}(t,x):=t +\vert{x}\vert^{\alpha-1}x_1,
  \quad
  \boldsymbol{q}(t,{x}):=0,\label{exactsol} 
\end{align}
where $\alpha:=\frac{6}{5}-\frac{2}{p}\approx 0.291$. Then, making the choice
\begin{align}
  \begin{aligned}
    \boldsymbol{f}&:=\partial_t \boldsymbol{u}-\divo
    \bS+\divo(\boldsymbol{u}\otimes\boldsymbol{u})\in
    L^{p'}(I,(W_0^{1,p}(\Omega)^2)^*),\\
    \boldsymbol{l}&:=\partial_t \boldsymbol{\omega}-\divo
    \bN+\divo(\boldsymbol{\omega}\boldsymbol{u})+\levy:\bS\in
    L^{p'}(I,(W_0^{1,p}(\Omega))^*),
  \end{aligned}\label{rhs}
\end{align}
the functions \eqref{exactsol} solve \eqref{eq:erf-2d} with right-hand
sides \eqref{rhs}.  In particular, note that the parameter $\alpha$ is
chosen so small that just
$\boldsymbol{u}(t)\hspace*{-0.05em}\in\hspace*{-0.05em} W^{1,p}_{\textup{div}}(\Omega)^2$ for every
$t\hspace*{-0.05em}\in \hspace*{-0.05em}\overline{I}$, since
${\vert \bD\boldsymbol{u}(t,x)\vert\hspace*{-0.05em}\sim\hspace*{-0.05em} \vert x\vert^{\alpha-1}\hspace*{-0.05em}\in\hspace*{-0.05em}
L^p(\Omega)}$,~but simultaneously
$\boldsymbol{u}(t)\notin W^{2,1}(\Omega)^2$ for every
$t\in \overline{I}$. Similar, this choice guarantees
$\mathrm{S}(t)\in L^{p'}(\Omega)^{2\times 2}$ for every
$t\in \overline{I}$, but neither that
$\mathrm{S}(t)\notin W^{1,p'}(\Omega)^{2\times 2}$, nor that
$\divo\mathrm{S}(t)\in L^{p'}(\Omega)^2$ for
every~${t\in \overline{I}}$. Thus, the right-hand side has just enough
regularity, namely
$\boldsymbol{f}\in L^{p'}(I,(W_0^{1,p}(\Omega)^2)^*)$, to fall into
the framework of our weak convergence result Theorem
\ref{rem:7.4}. Exactly the same considerations~also apply to both
$\boldsymbol{\omega}(t)\in W^{1,p}(\Omega)$ and
$\boldsymbol{l}\in L^{p'}(I,(W_0^{1,p}(\Omega))^*)$. 

The chosen low regularity has the consequence that one still finds convergence of the scheme, i.e., at least weak convergence in the sense of Theorem \ref{5.17}, however, no stable convergence rates could be recorded experimentally (cf.~Tables~\ref{tab1}~to~\ref{tab4}).

The spatial discretization of our domain $\Omega$ is obtained by a
sequence of uniform finite element meshes
$(\mathcal{T}_{h_n})_{n\in \mathbb{N}}$ consisting of triangles with
straight sides and diameter
${h_n:=\frac{h_0}{2^n}}$,~${h_0:=2\sqrt{2}}$, for every
$n\in \mathbb{N}$. Beginning with $\mathcal{T}_{h_1}$, see, e.g.,
Figure \ref{refine} the first and the third picture, for every
$n\in \mathbb{N}$ with $n\ge 2$, the mesh $\mathcal{T}_{h_n}$ is a
refinement of $\mathcal{T}_{h_{n-1}}$ obtained by subdividing~each~triangle~into~four, which is based an edge midpoint or regular $1:4$
refinement algorithm.
		
We consider the MINI element (cf.~Table~\ref{tab1} and
Table~\ref{tab2})~and~the~spatially~\mbox{conforming}~Crouzeix--Raviart element
(cf.~Table~\ref{tab3} and Table~\ref{tab4}). Furthermore, we use the
time step-sizes
${\tau_n\hspace*{-0.1em}:=\hspace*{-0.1em}0.02\cdot2^{-n}}$, i.e., ${K_n\hspace*{-0.1em}:=\hspace*{-0.1em}10\cdot2^n}$, $n\in \mathbb{N}$. Then, the
iterates
$(({u}_n^k,\omega_n^k)^\top\hspace*{-0.1em})_{k=0,\dots,K_n}\hspace*{-0.25em}\subseteq\hspace*{-0.2em}
V_n$ solving the straightforward two-dimensional
analog~of~\eqref{eq:p-NSnon} are approximated employing Newton's
iteration.~Apart~from~that, let the mapping
${F}:\mathbb{M}^{2\times 2}_{\textup{sym}}\to
\mathbb{M}^{2\times 2}_{\textup{sym}}$ be defined by
${F}({A}):=(\kappa+\vert
{A}\vert)^{\frac{p-2}{2}}{A}$ for every
${A}\in \mathbb{M}^{2\times 2}_{\textup{sym}}$. Equally, let
the mapping ${G}:\mathbb{R}^2\to \mathbb{R}^2$ be defined~by
${G}({a}):=(\kappa+\vert
{a}\vert)^{\frac{p-2}{2}}{a}$ for every
${a}\in \mathbb{R}^2$.~Then, for $n=1,\dots,6$, we are
interested in the~parabolic~errors 
\begin{alignat*}{2}
  e_{{F},\boldsymbol{u}}^n&:=\bigg(\sum_{k=0}^{K_n}{\tau_n\|{F}(\bD\boldsymbol{u}(t_k))-{F}(\bD{u}_n^k)\|^2_{L^2(\Omega)^{2\times 2}}}\bigg)^{\frac{1}{2}}, \quad&&e_{L^2,\boldsymbol{u}}^n:=\max_{0\leq k \leq K_n}{\|\boldsymbol{u}(t_k)-{u}_n^k\|_{L^2(\Omega)^2}},\\
  e_{{G},\boldsymbol{\omega}}^n&:=\bigg(\sum_{k=0}^{K_n}{\tau_n\|{G}(\nabla\boldsymbol{\omega}(t_k))-{G}(\nabla\omega_n^k)\|^2_{L^2(\Omega)^2}}\bigg)^{\frac{1}{2}},
  &&e_{L^2,\boldsymbol{\omega}}^n:=\max_{0\leq k \leq
    K_n}{\|\boldsymbol{\omega}(t_k)-\omega_n^k\|_{L^2(\Omega)}}, 
\end{alignat*}
which can be considered as approximations of
$\|{F}(\bD\boldsymbol{u})-{F}(\bD\overline{\boldsymbol{u}}_n^\tau)\|_{L^2(Q_T)^{2\times
    2}}$,
$\|\boldsymbol{u}-\overline{\boldsymbol{u}}_n^\tau\|_{L^\infty(I,L^2(\Omega)^2)}$,
$\|{G}(\nabla\boldsymbol{\omega})-{G}(\nabla\overline{\boldsymbol{\omega}}_n^\tau)\|_{L^2(Q_T)^2}$
and
$\|\boldsymbol{u}-\overline{\boldsymbol{u}}_n^\tau\|_{L^\infty(I,L^2(\Omega))}$. In
particular, we are interested in the total parabolic errors
$e_{\textrm{tot},\boldsymbol{u}}^n:=e_{{F},\boldsymbol{u}}^n+e_{L^2,\boldsymbol{u}}^n$
and
$e_{\textrm{tot},\boldsymbol{\omega}}^n:=e_{{G},\boldsymbol{\omega}}^n+e_{L^2,\boldsymbol{\omega}}^n$
for $n=1,\dots,6$.  As an estimation of the convergence rates, we
employ the experimental order of convergence (\texttt{EOC}): 
\begin{align*}
  \texttt{EOC}(e^n):=\frac{\log\big(\frac{e^n}{e^{n-1}}\big)}{\log\big(\frac{h_n+\tau_n}{h_{n-1}+\tau_{n-1}}\big)},\quad
  n=2,\dots,6, 
\end{align*}
where $e^n$, $n=1,\dots,6$, either denote
$e_{{F},\boldsymbol{u}}^n$, $e_{L^2,\boldsymbol{u}}^n$,
$e_{\textrm{tot},\boldsymbol{u}}^n$,
$e_{{G},\boldsymbol{\omega}}^n$,
$e_{L^2,\boldsymbol{\omega}}^n$, or
$e_{\textrm{tot},\boldsymbol{\omega}}^n$, $n=1,\dots,6$, resp.

In order to obtain a higher accuracy in the computation of these errors,
in particular, with regard to the singularities of the exact solutions
around the origin, we interpolate~both~$\boldsymbol{u}(t_k)$~and~${u}_n^k$, or $\boldsymbol{\omega}(t_k)$ and $\omega_n^k$, into
 polynomial spaces of higher order with respect to a suitably refined~mesh, namely into $\mathcal{P}_5(\mathcal{T}_{h_n}')^2$, or
$\mathcal{P}_5(\mathcal{T}_{h_n}')$, resp., where $\mathcal{T}_{h_n}'$
is a refinement of $\mathcal{T}_{h_n}$, which is obtained by applying
the longest edge bisection method of \textsf{FEniCS} to
$\mathcal{T}_{h_{n+1}}$ for all cells $T\in \mathcal{T}_{h_{n+1}}$
that satisfy $\textup{dist}(T,{0}) < 0.25$ and subsequently on
the resulting refined mesh $\widetilde{\mathcal{T}}_{h_{n+1}}$ for all
cells $T\in \widetilde{\mathcal{T}}_{h_{n+1}}$~that satisfy
$\textup{dist}(T,{0})< 0.1$, see e.g. Figure \ref{refine} the
second and the fourth picture. 
\begin{center}
  \begin{figure}[ht]
    \centering \hspace*{-0.65cm}
    \includegraphics[width=15.3cm]{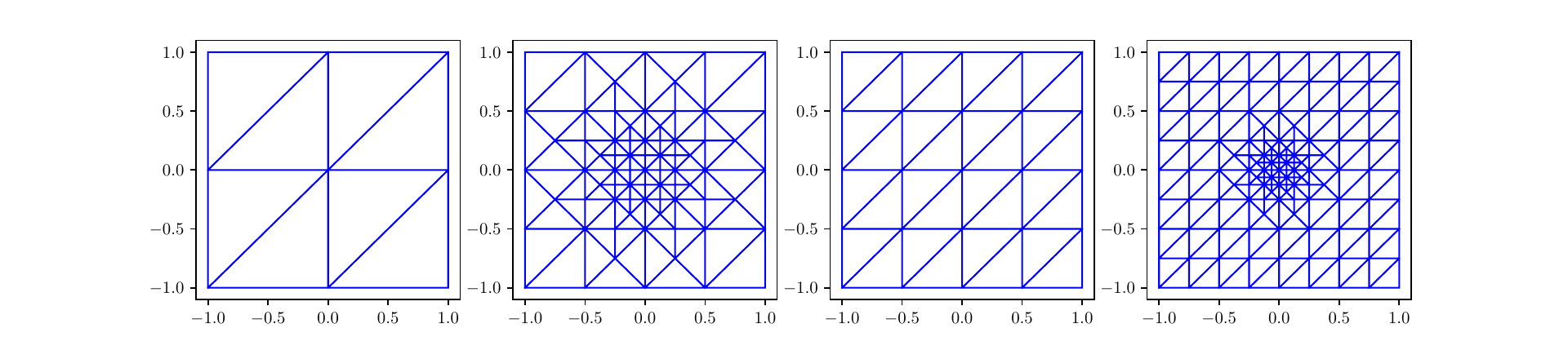}\\
    \caption{From the left to the right: snapshots of the meshes $\mathcal{T}_{h_1}$, $\mathcal{T}_{h_1}'$, $\mathcal{T}_{h_2}$ and $\mathcal{T}_{h_2}'$.}
    \label{refine}
  \end{figure}
\end{center} 

		In this manner, we obtain the following results:\vspace*{-0.1cm}
		
		\begin{table}[ht]
			\centering
			\begin{tabular}[t]{c|c|c|c|c|c|c|c} \toprule
				$n$&$h_n=\frac{h_0}{2^n}$&$\tau_n=\frac{0.2T}{2^n}$& $e_{L^2,\boldsymbol{u}}^n$&
				$\texttt{EOC}(e_{L^2,\boldsymbol{u}}^n)$
				& $e_{F,\boldsymbol{u}}^n$&
				$\texttt{EOC}(e_{F,\boldsymbol{u}}^n)$
				& $\texttt{EOC}(e_{\textrm{tot},\boldsymbol{u}}^n)$\\   
				\midrule
				$1$&  $1.414$ & $1.00\mathrm{e}{\sm2}$ & $0.463$ & -- & $0.548$ & --& --\\
				$2$&   $7.07\mathrm{e}{\sm1}$ & $5.00\mathrm{e}{\sm3}$ & $0.263$ & $0.81$ & $0.463$& $0.24$& $0.48$\\
				$3$&  $3.54\mathrm{e}{\sm1}$ &$2.50\mathrm{e}{\sm3}$ & $0.139$ & $0.92$ & $0.407$& $0.18$& $0.41$\\
				$4$&  $1.77\mathrm{e}{\sm1} $&$1.25\mathrm{e}{\sm3}$ & $0.075$ & $0.89$ & $0.362$& $0.17$ & $0.32$\\
				$5$&  $8.84\mathrm{e}{\sm2} $  &$6.25\mathrm{e}{\sm4}$& $0.042$ & $0.86$ & $ 0.323$& $0.17$& $0.26$\\
				$6$&  $4.42\mathrm{e}{\sm2} $  &$3.13\mathrm{e}{\sm4}$& $0.028$ & $0.57$ & $0.289$ & $0.16$ & $0.20$\\
				\bottomrule \hline
			\end{tabular}
			\caption{Error analysis with respect to $\boldsymbol{u}$ for the MINI element.}
			\label{tab1}
		\end{table}

		\begin{table}[ht]
			\centering
			\begin{tabular}[t]{c|c|c|c|c|c|c|c} \toprule
				$n$&$h_n=\frac{h_0}{2^n}$&$\tau_n=\frac{0.2T}{2^n}$& $e_{L^2,\boldsymbol{\omega}}^n$&
				$\texttt{EOC}(e_{L^2,\boldsymbol{\omega}}^n)$
				& $e_{G,\boldsymbol{\omega}}^n$&
				$\texttt{EOC}(e_{G,\boldsymbol{\omega}}^n)$
				& $\texttt{EOC}(e_{\textrm{tot},\boldsymbol{\omega}}^n)$\\   
				\midrule
				$1$&  $1.414$ & $1.00\mathrm{e}{\sm2}$ & $0.359$ & -- & $0.592$ & --& --\\
				$2$&   $7.07\mathrm{e}{\sm1}$ & $5.00\mathrm{e}{\sm3}$ & $0.203$ & $0.83$ & $0.527$& $0.17$& $0.38$\\
				$3$&  $3.54\mathrm{e}{\sm1}$ &$2.50\mathrm{e}{\sm3}$ & $0.105$ & $0.95$ & $0.465$& $0.18$& $0.36$\\
				$4$&  $1.77\mathrm{e}{\sm1} $&$1.25\mathrm{e}{\sm3}$ & $0.058$ & $0.87$ & $0.410$& $0.18$ & 
				$0.29$\\
				$5$&  $8.84\mathrm{e}{\sm2} $  &$6.25\mathrm{e}{\sm4}$& $0.039$ & $0.55$ & $ 0.363$& $0.18$& $0.22$\\
				$6$&  $4.42\mathrm{e}{\sm2} $  &$3.13\mathrm{e}{\sm4}$& $0.036$ & $0.13$ & $0.324$ & $0.16$ & $0.16$\\
				\bottomrule \hline
			\end{tabular}
			\caption{Error analysis with respect to $\boldsymbol{\omega}$ for the MINI element.}
			\label{tab2}
		\end{table}
		\begin{table}[ht]
			\centering
			\begin{tabular}[t]{c|c|c|c|c|c|c|c} \toprule
				$n$&$h_n=\frac{h_0}{2^n}$&$\tau_n=\frac{0.2T}{2^n}$& $e_{L^2,\boldsymbol{u}}^n$&
				$\texttt{EOC}(e_{L^2,\boldsymbol{u}}^n)$
				& $e_{{F},\boldsymbol{u}}^n$&
				$\texttt{EOC}(e_{{F},\boldsymbol{u}}^n)$
				& $\texttt{EOC}(e_{\textrm{tot},\boldsymbol{u}}^n)$\\   
				\midrule
				$1$&  $1.414$ & $1.00\mathrm{e}{\sm2}$ & $0.225$ & -- & $0.419$ & --& --\\
				$2$&   $7.07\mathrm{e}{\sm1}$ & $5.00\mathrm{e}{\sm3}$ & $0.134$ & $0.75$ & $0.367$& $0.19$& $0.36$\\
				$3$&  $3.54\mathrm{e}{\sm1}$ &$2.50\mathrm{e}{\sm3}$ & $0.086$ & $0.64$ & $0.329$& $0.16$& $0.27$\\
				$4$&  $1.77\mathrm{e}{\sm1} $&$1.25\mathrm{e}{\sm3}$ & $0.054$ & $0.67$ & $0.299$& $0.14$ & $0.23$\\
				$5$&  $8.84\mathrm{e}{\sm2} $  &$6.25\mathrm{e}{\sm4}$& $0.034$ & $0.66$ & $ 0.272$& $0.14$& $0.20$\\
				$6$&  $4.42\mathrm{e}{\sm2} $  &$3.13\mathrm{e}{\sm4}$& $0.026$ & $0.38$ & $0.249$ & $0.12$ & $0.15$\\
				\bottomrule \hline
			\end{tabular}
			\caption{Error analysis with respect to $\boldsymbol{u}$ for the spatially conforming Crouzeix--Raviart element.}
			\label{tab3}
		\end{table}
		\begin{table}[ht]
			\centering
			\begin{tabular}[t]{c|c|c|c|c|c|c|c} \toprule
				$n$&$h_n=\frac{h_0}{2^n}$&$\tau_n=\frac{0.2T}{2^n}$& $e_{L^2,\boldsymbol{\omega}}^n$&
				$\texttt{EOC}(e_{L^2,\boldsymbol{\omega}}^n)$
				& $e_{G,\boldsymbol{\omega}}^n$&
				$\texttt{EOC}(e_{G,\boldsymbol{\omega}}^n)$
				& $\texttt{EOC}(e_{\textrm{tot},\boldsymbol{\omega}}^n)$\\   
				\midrule
				$1$&  $1.414$ & $1.00\mathrm{e}{\sm2}$ & $0.359$ & -- & $0.591$ & --& --\\
				$2$&   $7.07\mathrm{e}{\sm1}$ & $5.00\mathrm{e}{\sm3}$ & $0.203$ & $0.82$ & $0.527$& $0.17$& $0.38$\\
				$3$&  $3.54\mathrm{e}{\sm1}$ &$2.50\mathrm{e}{\sm3}$ & $0.106$ & $0.94$ & $0.465$& $0.18$& $0.35$\\
				$4$&  $1.77\mathrm{e}{\sm1} $&$1.25\mathrm{e}{\sm3}$ & $0.058$ & $0.87$ & $0.410$& $0.18$ & 
				$0.29$\\
				$5$&  $8.84\mathrm{e}{\sm2} $  &$6.25\mathrm{e}{\sm4}$& $0.040$ & $0.55$ & $ 0.363$& $0.18$& $0.22$\\
				$6$&  $4.42\mathrm{e}{\sm2} $  &$3.13\mathrm{e}{\sm4}$& $0.036$ & $0.13$ & $0.324$ & $0.16$ & $0.16$\\
				\bottomrule \hline
			\end{tabular}
			\caption{Error analysis with respect to $\boldsymbol{\omega}$ for the spatially conforming Crouzeix--Raviart element.}
			\label{tab4}
		\end{table}

\section*{Acknowlegdements}

Luigi C. Berselli was partially supported by a grant of the group
GNAMPA of INdAM and by the University of Pisa within
the grant PRA$\_{}2018\_{}52$ UNIPI: ``\textit{Energy and regularity:
  New techniques for classical PDE problems.}'' We would like to thank
the referees for their valuable comments. 

\providecommand{\MR}[1]{}\def\cprime{$'$} \def\cprime{$'$} \def\cprime{$'$}
\providecommand{\noopsort}[1]{}
\providecommand{\arxiv}[1]{\href{http://www.arxiv.org/abs/#1}{arXiv~#1}}
\providecommand{\doi}[1]{\url{https://doi.org/#1}}
\providecommand{\href}[2]{#2}


\end{document}